\numberwithin{equation}{section}
\newtheorem{theorem}{Theorem}[section]
\newtheorem{lemma}[theorem]{Lemma}
\newtheorem{corollary}[theorem]{Corollary}
\newcommand{\bfn}{{\boldsymbol n}}
\theoremstyle{definition}
\theoremstyle{remark}
\newtheorem{remark}[theorem]{Remark}
\newcommand{\tnorm}{\@ifstar\@tnorms\@tnorm}
\newcommand{\@tnorms}[1]{%
	\left|\mkern-1.5mu\left|\mkern-1.5mu\left|
	#1
	\right|\mkern-1.5mu\right|\mkern-1.5mu\right|
}
\newcommand{\@tnorm}[2][]{%
	\mathopen{#1|\mkern-1.5mu#1|\mkern-1.5mu#1|}
	#2
	\mathclose{#1|\mkern-1.5mu#1|\mkern-1.5mu#1|}
}
\newcommand{\vertiii}[2][1]{\abs[#1]{\kern0.15ex\norm[#1]{#2}\kern-0.25ex}}
\newcommand{\D}{\mathrm{D}}
\newcommand{\bfu}{\boldsymbol{u}}
\newcommand{\bfx}{\boldsymbol{x}}
\newcommand{\bfchi}{\boldsymbol{\chi}}
\title[DG method for incompressible three-phase flow in porous media]{Existence and convergence of a discontinuous Galerkin method for the incompressible three-phase flow problem in porous media}
\author[G. Sosa Jones]{Giselle Sosa Jones}
\address{Department of Mathematics, University of Houston}
\email{ggsosajo@central.uh.edu}
\author[B. Riviere]{Beatrice Riviere}
\address{Department of Computational and Applied Mathematics, Rice University}
\email{riviere@rice.edu}
\author[L. Cappanera]{Lo\"{i}c Cappanera}
\address{Department of Mathematics, University of Houston}
\email{lmcappan@central.uh.edu}
\begin{document}
	
	\maketitle
	
	\begin{abstract}
		{This paper presents and analyzes a discontinuous Galerkin method for the incompressible three-phase flow problem in porous media. We use a first order time extrapolation which allows us to solve the equations implicitly and sequentially. We show that the discrete problem is well-posed, and obtain a priori error estimates. Our numerical results validate the theoretical results, i.e. the algorithm converges with first order.}
		{discontinuous Galerkin; three-phase flow; porous media; a priori error estimates.}
	\end{abstract}
	
	\maketitle
	
	
	\section{Introduction}
	
	\label{sec:introduction}
	
	Subsurface modeling is important in improving the efficiency of clean-up strategies of contaminated subsurface or
	the long-term storage of carbon dioxide in subsurface. Incompressible systems  of liquid phase, aqueous phase and vapor phase are
	mathematically modeled by nonlinear coupled partial differential equations that are challenging to analyze.
	This work formulates a numerical scheme for solving for the liquid pressure, the aqueous saturation and the vapor saturation
	using  discontinuous Galerkin methods in space
	and sequential implicit time stepping. This choice of primary unknown is
	inspired from previous work of  \cite{shank1989practical,hajibeygi2014compositional,Cappanera:2019}.
	Existence and uniqueness of the solutions is proved and convergence of the numerical method
	is obtained by deriving a priori error estimates.
	While the literature on computational modeling of three-phase flows is vast, to our knowledge, there are no papers on the theoretical analysis of the discretization of the three-phase flow problem.
	
	Ideal numerical methods for modeling multiphase flow in porous media are to 
	be locally mass conservative to accurately track the propagation
	of the phases through the media. Heterogeneities of the porous media include
	highly discontinuous permeability fields with possibly local
	geological features like pinch-out.  This implies that the numerical methods should handle discontinuous coefficients and unstructured grids. Discontinuous Galerkin methods are suitable methods thanks to their flexibility derived from the lack of continuity constraint between approximations on neighboring cells. DG are known to be locally mass conservative, to handle highly varying permeability fields and to be accurate and robust on unstructured meshes. For these reasons, the literature on DG methods for porous media flows has exponentially increased
	over the last twenty years.  The main drawback of these methods is their cost, which is higher than the cost of low order finite difference methods and finite volume methods.  DG has been applied to incompressible three-phase flow in \cite{DongRiviere2016} and to compressible three-phase flow in
	\cite{RankinRiviere2015,CappaneraRiviereSPE,CappaneraRiviere}. In the absence of capillary pressure, DG is combined with finite volume method in \cite{NatvigLie2008}, and with mixed finite element method in \cite{moortgat2013higher,moortgat2016mixed}.  These papers show the convergence of the method by performing numerical simulations on a sequence of uniformly refined meshes. The theoretical convergence of numerical methods for three-phase flows remains an open problem and this paper provides the
	theoretical analysis of DG methods in the case of incompressible three-phase flows under certain conditions on the data.
	While the numerical analysis of three-phase flow is sparse, we note that the case of immiscible two-phase flows in porous media has been investigated
	in several papers.  For instance for incompressible flows, finite difference  methods have been analyzed in \cite{douglas1983finite}, finite volume methods in \cite{ohlberger1997convergence,Eymard2003,Michel03}, DG methods in \cite{EpshteynRiviere2009}, and finite element methods \cite{chen2001error,GiraultRiviereCappanera1,GiraultRiviereCappanera2}.
	
	The paper is organized as follows. In \cref{sec:problem_description}, we present the problem considered and its mathematical formulation. Sections \ref{sec:time_discretization}-\ref{sec:spatial_discretization} describe the time and spatial discretization of our algorithm. Classical projection estimates and the hypothesis used for the numerical analysis of our method are detailed in \cref{sec:preliminaries}. Then we show that the discrete problem is well-posed in \cref{sec:existence_uniqueness} and we establish a priori error estimates in \cref{sec:error_analysis}. Eventually, we perform numerical investigations in \cref{sec:numerical_results} that recover the theoretical rate of convergence for various setups.
	
	\section{Problem description}
	\label{sec:problem_description}
	
	Let $p_j$, $s_j$ denote the pressure and the saturation, respectively, of the phase $j$, where $j = \ell, v, a$ (liquid, vapor and aqueous). The saturation for phase $j$ at a point $\bfx$ in the domain $\Omega \subset \mathbb{R}^d$, with $d = 2,3$, is defined as the ratio of the volume of phase $j$ to the total pore volume in a representative elementary volume centered around the point $\bfx$.  Thus, the saturations satisfy
	%
	%
	%
	\begin{equation}
		s_\ell + s_v + s_a = 1.
	\end{equation}
	Assuming that the phase densities and the porosity are constant, the mass conservation equation of each component is expressed as
	\begin{equation}\label{eq:mass_conserv}
		\phi \partial_t s_j - \nabla \cdot \del[1]{ \kappa\lambda_j \del[0]{\nabla p_j - \rho_j \boldsymbol{g}}} = q_j,\quad j=\ell, a, v,
	\end{equation}
	%
	%
	where $\kappa$ is the absolute permeability, $\rho_j$ denotes the density of the phase $j$, $\lambda_j$ denotes the mobility of the phase $j$ and $\phi$ is the porosity of the medium. The mobility $\lambda_j$ is defined as $\lambda_j =  k_{rj}/\mu_j$, where $k_{rj}$ and $\mu_j$ represent the relative permeability and viscosity of the phase $j$, respectively. The gravity is denoted by $\boldsymbol{g}$ and $q_\ell$, $q_v$ and $q_a$ are source/sink terms. 
	The differences between phase pressures are capillary pressures  $p_{c,v}$ and $p_{c,a}$ defined as follows:
	\begin{equation}
		p_{c,v} = p_v - p_\ell, \quad p_{c,a} = p_\ell - p_a.
		\label{eq:cap_pressures}
	\end{equation}
	From the set of unknowns (saturations and pressures), we choose for primary unknowns the
	liquid pressure $p_\ell$, the aqueous saturation $s_a$ and the vapor saturation $s_v$. 
	For clarity we explicitly write the dependence of the different quantities with respect to the primary unknowns:
	\begin{align}
		p_{c,v}(s_v), \quad p_{c,a}(s_a), \quad \lambda_\ell(s_v, s_a), \quad \lambda_v(s_v, s_a), \quad \lambda_a(s_v, s_a),\label{eq:depvar1}\\
		\mu_\ell(p_\ell), \quad \mu_v(s_v,s_a), \quad \mu_a(s_v, s_a). \label{eq:depvar2}
	\end{align}
	Moreover, the capillary pressures  are assumed to be differentiable, $\partial_{s_a} p_{c,a}$ is a negative function, and $\partial_{s_v} p_{c,v}$ is a positive function.

	%
	
	
	\subsection{Rewritten equations}
	
	Summing the three mass conservation equations \eqref{eq:mass_conserv} and using the definition of the capillary pressure \cref{eq:cap_pressures} yields the liquid pressure equation
	%
	%
	%
	\begin{equation}
		- \nabla\cdot \del[1]{\lambda_t \kappa \nabla p_\ell}  - \nabla\cdot \del[1]{\lambda_v \kappa \nabla p_{c,v}} + \nabla\cdot \del[1]{\lambda_a \kappa\nabla p_{c,a}}
		= q_t - \nabla\cdot \del[1]{\kappa\del[0]{\rho \lambda}_t\boldsymbol{g}},
		\label{eq:liquid_pressure_eq}
	\end{equation}
	where
	\begin{equation}
		\del[0]{\rho\lambda}_t = \rho_\ell\lambda_\ell + \rho_v\lambda_v + \rho_a\lambda_a, \quad \lambda_t = \lambda_\ell + \lambda_v + \lambda_a, \quad q_t = q_\ell + q_v + q_a.
	\end{equation}
	%
	%
	Using the capillary pressure $p_{c,a}$, the mass conservation \eqref{eq:mass_conserv} satisfied by the aqueous saturation can be rewritten
	%
	%
	\begin{equation}
		\phi \partial_t s_a + \nabla\cdot \del{\kappa\lambda_a \partial_{s_a} p_{c,a}\nabla s_a} - \nabla\cdot\del[1]{\kappa\lambda_a \nabla p_\ell} = q_a
		- \nabla\cdot\del[1]{\rho_a\kappa\lambda_a \boldsymbol{g}}.
		\label{eq:aqueous_saturation_eq}
	\end{equation}
	Similarly, the vapor saturation $s_v$ satisfies the following equation, derived from \eqref{eq:mass_conserv} with $j=v$. 
	%
	%
	\begin{equation}
		\phi \partial_t s_v - \nabla\cdot \del{\kappa\lambda_v \partial_{s_v} p_{c,v}\nabla s_v}  
		- \nabla\cdot\del[1]{\kappa\lambda_v \nabla p_\ell} = q_v
		- \nabla\cdot\del[1]{\rho_v\kappa\lambda_v \boldsymbol{g}}.
		\label{eq:vapor_saturation_eq}
	\end{equation}

	These equations are complemented with Dirichlet and Neumann boundary conditions. The boundary of the computational domain $\Omega$ is decomposed as
	\begin{equation}
		\partial\Omega = \Gamma_{\text{D}}^{p_\ell} \cup \Gamma_{\text{N}}^{p_\ell} = \Gamma_{\text{D}}^{s_a} \cup \Gamma_{\text{N}}^{s_a} = \Gamma_{\text{D}}^{s_v} \cup \Gamma_{\text{N}}^{s_v},
	\end{equation}
	with $\vert \Gamma_{\text{D}}^{p_\ell}\vert >0, \vert \Gamma_{\text{D}}^{s_a}\vert > 0, \vert \Gamma_{\text{D}}^{s_v} \vert > 0$. 
	The Dirichlet boundary conditions imposed on $\Gamma_{\text{D}}^{p_\ell}$, $\Gamma_{\text{D}}^{s_a}$ and $\Gamma_{\text{D}}^{s_v}$ are denoted by $p_\ell^{\text{bdy}}$, $s_a^{\text{bdy}}$, $s_v^{\text{bdy}}$. The Neumann boundary conditions imposed on $\Gamma_{\text{N}}^{p_\ell}$, $\Gamma_{\text{N}}^{s_a}$ and $\Gamma_{\text{N}}^{s_v}$ are given by
	\begin{subequations}
		\begin{equation}
			\del{\lambda_t \kappa \nabla p_\ell + 	\lambda_v\kappa \nabla p_{c,v} - \lambda_a\kappa \nabla p_{c,a} - \kappa (\rho\lambda)_t \boldsymbol{g}}\cdot \boldsymbol{n} = j_p^{\text{N}},
			\label{eq:Neumann_BC_pressure}
		\end{equation}
		\begin{equation}
			\del{- \kappa \lambda_a \partial_{s_a} p_{c,a}\nabla s_a + \kappa\lambda_a\nabla p_\ell - \rho_a \kappa \lambda_a \boldsymbol{g}}\cdot \boldsymbol{n} = j_{s_a}^{\text{N}},
			\label{eq:Neumann_BC_aqueous_saturation}
		\end{equation}
		\begin{equation}
			\del{\kappa \lambda_v \partial_{s_v} p_{c,v} \nabla s_v + \kappa\lambda_v\nabla p_\ell - \rho_v \kappa \lambda_v \boldsymbol{g}}\cdot \boldsymbol{n} = j_{s_v}^{\text{N}},
			\label{eq:Neumann_BC_vapor_saturation}
		\end{equation}
	\end{subequations}
	where $\boldsymbol{n}$ represents the outward unit normal vector to the boundary $\partial\Omega$. 
	
	\section{Time discretization}
	\label{sec:time_discretization}
	
	For the time discretization, we use a backward Euler method and partition the time interval $[0, T]$ using a time step $\tau > 0$ such that $N\tau = T$.  In the rest of the paper we define $t_n = n\tau$ for any integer $0 \leq n \leq N$, and for any time dependent function $f$, we define $f^n = f|_{t = t_n}$.
	
	\subsection{Liquid pressure}
	
	The time discretization of the liquid pressure \cref{eq:liquid_pressure_eq} reads
	\begin{equation}
		- \nabla\cdot \del[1]{\lambda_t^n \kappa\nabla p_\ell^{n+1}}
		= q_t^{n+1} 
		- \nabla\cdot \del[1]{\kappa\del[0]{\rho\lambda}_t^n\boldsymbol{g}} 
		+ \nabla\cdot \del[1]{\lambda_v^n \kappa\nabla p_{c,v}^n} - \nabla\cdot \del[1]{\lambda_a^n \kappa\nabla p_{c,a}^n}.
	\end{equation}
	%
	
	\subsection{Aqueous saturation}
	
	The time discretization of the aqueous saturation equation \cref{eq:aqueous_saturation_eq} is
	\begin{equation}
		\phi \frac{s_a^{n+1} - s_a^n}{\tau} + \nabla\cdot \del{\kappa\lambda_a^n \del[1]{\partial_{s_a}p_{c,a}}^n \nabla s_a^{n+1}} = q_a^{n+1} + \nabla\cdot\del[1]{\kappa\lambda_a^n \del[0]{\nabla p_\ell^{n+1} - \rho_a\boldsymbol{g}}}.
		\label{eq:satu_aux}
	\end{equation}
	Note that $\partial_{s_a} p_{c,a}$ is negative. Therefore, with $\del[1]{\partial_{s_a}p_{c,a}}^{n,+} = -\del[1]{\partial_{s_a}p_{c,a}}^{n}$, we may write \cref{eq:satu_aux} as
	\begin{equation}
		\phi \frac{s_a^{n+1} - s_a^n}{\tau} - \nabla\cdot \del{\kappa\lambda_a^n \del[1]{\partial_{s_a}p_{c,a}}^{n,+} \nabla s_a^{n+1}} = q_a^{n+1} + \nabla\cdot\del[1]{\kappa\lambda_a^n \del[0]{\nabla p_\ell^{n+1} - \rho_a\boldsymbol{g}}}.
		\label{eq:satu_aux2}
	\end{equation}
	%

	\subsection{Vapor saturation}
	
	The time discretization of the vapor saturation equation \cref{eq:vapor_saturation_eq} reads
	\begin{equation}
		\phi \frac{s_v^{n+1} - s_v^n}{\tau} - \nabla\cdot \del{\kappa\lambda_v^n \del[1]{\partial_{s_v}p_{c,v}}^n \nabla s_v^{n+1}} = q_v^{n+1}
		+ \nabla\cdot\del[1]{\kappa\lambda_v^n \del[0]{\nabla p_\ell^{n+1} - \rho_v\boldsymbol{g}}}.
		\label{eq:satu_aux_vap}
	\end{equation}
	%
	
	\section{Spatial discretization}
	\label{sec:spatial_discretization}
	
	For the spatial discretization, we use an interior penalty discontinuous Galerkin method. The domain $\Omega$ is discretized with a conforming, shape-regular mesh $\mathcal{E}_h$ consisting of simplices or
	quadrilateral and hexaedral elements. We denote by $h_e$ and $h_K$ the size of an edge (or face for $d=3$)  $e$ and an element $K$, respectively. Moreover, we define the mesh size $h = \max_{K \in \mathcal{E}_h}h_K$. For any quadrilateral element $K$, we define the two-dimensional local polynomial space $\mathbb{P}_{k_1,k_2}(K)$ as
	\begin{equation}
		\mathbb{P}_{k_1,k_2}(K) = \cbr{p(x,y) \; | \; p(x,y) = \sum_{i \leq k_1}\sum_{j \leq k_2}a_{ij}x^i y^j}.
	\end{equation}
	The three-dimensional local polynomial space $\mathbb{P}_{k_1,k_2,k_3}(K)$ is defined similarly. Finally, we define $\mathbb{Q}_k(K) = \mathbb{P}_{k,k}(K)$ for $d = 2$, and $\mathbb{Q}_k(K) = \mathbb{P}_{k,k,k}(K)$ for $d = 3$.
	The space of discontinuous piecewise linear polynomials is denoted by $X_h$. If $\mathcal{E}_h$ consists of quadrilateral or hexaedral elements, the space $X_h$ is defined by:
	\begin{equation}
		X_h = \cbr[1]{v \in L^2(\Omega) : v|_K \in \mathbb{Q}_{1}(K), \forall K \in \mathcal{E}_h}.
	\end{equation}

	The discrete liquid pressure, aqueous saturation and vapor saturation at time $t_n$ are denoted by
	$P_h^n, S_{a_h}^n$ and $S_{v_h}^n$ respectively; they belong to the finite-dimensional spaces $X_h$.
	The Dirichlet boundary conditions are imposed strongly; thus we assume that the data $p_\ell^{\mathrm{bdy}}, s_a^{\mathrm{bdy}}, s_v^{\mathrm{bdy}}$ are traces of functions in $X_h$.  This assumption is in agreement with realistic simulations where the Dirichlet data are
	simply constants on the Dirichlet boundaries.  We will make use of the following finite-dimensional spaces for the test functions:
	\begin{equation}
		X_{h,\Gamma_\D^{p_\ell}} = X_h \cap \{ v=0 \mbox{ on } \Gamma_\D^{p_\ell}\}, \quad
		X_{h,\Gamma_\D^{s_a}} = X_h \cap \{ v=0 \mbox{ on } \Gamma_\D^{s_a}\}, \quad
		X_{h,\Gamma_\D^{s_v}} = X_h \cap \{v=0 \mbox{ on } \Gamma_\D^{s_v}\}.
	\end{equation}
	We also define the Raviart--Thomas space $\mathbb{RT}_0$:
	\begin{equation}
		\mathbb{RT}_0 = \cbr[1]{\boldsymbol{u} \in H(\text{div},\Omega) : \boldsymbol{u}|_K \in \mathbb{RT}_{0}(K), \forall K \in \mathcal{E}_h},
	\end{equation}
	where
	\begin{equation}
		\mathbb{RT}_0(K) = \begin{cases}
			\mathbb{P}_{1,0}(K) \times \mathbb{P}_{0,1}(K) & d = 2,
			\\
			\mathbb{P}_{1,0,0}(K) \times \mathbb{P}_{0,1,0}(K) \times \mathbb{P}_{0,0,1}(K) & d = 3.
		\end{cases}
	\end{equation}
	We note that the above spaces can be defined similarly if one uses simplices elements. 
	The set of interior faces is denoted by $\Gamma_h$.  For any interior face $e$, we fix a unit normal vector $\boldsymbol{n}_e$, and we denote by $K_1$ and $K_2$ the elements that share
	the face $e$ such that $\boldsymbol{n}_e$ points from $K_1$ into $K_2$. 
	For any function  $f\in X_h$,  we define the jump operator $\sbr[0]{\cdot}$ on interior faces as $\sbr[1]{f} = f_1 - f_2$, where $f_i = f|_{K_i}$. Moreover, we define the weighted average operator $\cbr[0]{\cdot}$ on interior faces as $\cbr[0]{A \nabla f \cdot \boldsymbol{n}_e} = \omega_1 A_1 \nabla f_1 \cdot \boldsymbol{n}_e + \omega_2 A_2 \nabla f_2 \cdot \boldsymbol{n}_e$, where $\omega_1 = A_2\del[0]{A_1 + A_2}^{-1}$ and $\omega_2 = A_1\del[0]{A_1 + A_2}^{-1}$. Note that the standard average operator with weights $\omega_1 = \omega_2 = 1/2$
	is denoted by $\{\cdot\}_\frac12$. On boundary faces, the jump and weighted average operators are defined as follows $\sbr[0]{f}=\{f\}=f  $. In the following, the $L^2$ inner-product over $\Omega$ is denoted by $(\cdot,\cdot)$. The parameters $\theta_{p_\ell}, \, \theta_{s_a},\, \theta_{s_v}$ take values $-1,\,0,\,1$ which respectively correspond to symmetric, incomplete and nonsymmetric interior penalty discontinuous Galerkin.

	%
	
	\subsection{Liquid pressure}
	
	The discrete problem for the liquid pressure reads: find $P_h^{n+1} \in X_h$ such that $P_h^{n+1} = p_\ell^\mathrm{bdy}$  on $\Gamma_\D^{p_\ell}$ and the following relation is satisfied for all $w_h \in X_{h,\Gamma_\D^{p_\ell}}$:
	\begin{equation}
		b_{p}^{n}(P_{h}^{n+1},w_h) = f_{p}^{n}(w_h),
		\label{eq:disc_pl}
	\end{equation}
	where, $b_{p}^{n}(P_{h}^{n+1},w_h) = b_{p}(P_{h}^{n+1},w_h; P_{h}^{n}, S_{a_h}^n, S_{v_h}^n)$, $f_{p}^{n}(w_h) = 
	f_{p}(w_h; P_{h}^{n}, S_{a_h}^n, S_{v_h}^n)$, with $b_{p}$ and $f_{p}$ defined as
	\begin{equation}
		\begin{split}
			b_{p}(v_h,w_h; P_{h}^n, S_{a_h}^n, S_{v_h}^n) &= \sum_{K \in \mathcal{E}_h}\int_K \lambda_t^n\kappa \nabla v_{h}\cdot \nabla w_h
			+ \sum_{e \in \Gamma_h} \alpha_{p_\ell,e} h_e^{-1}\int_e \eta_{p_\ell,e}^n \sbr[1]{v_{h}}\sbr[1]{w_h}
			\\
			&-\sum_{e \in \Gamma_h }\int_e \cbr[1]{\lambda_t^n \kappa\nabla v_h\cdot \boldsymbol{n}_e}\sbr[1]{w_h} 
			+ \theta_{p_\ell} \sum_{e \in \Gamma_h }\int_e \cbr[1]{\lambda_t^n \kappa\nabla w_h \cdot\boldsymbol{n}_e}\sbr[1]{v_{h}},
		\end{split}
		\label{eq:ah_pressure}
	\end{equation}
	and 
	\begin{equation}
		\begin{split}
			f_{p}(w_h;&P_{h}^n, S_{a_h}^n, S_{v_h}^n) = ( q_t^{n+1}, w_h)
			+\sum_{e \in \Gamma_{\text{N}}^{p_\ell}}\int_e j_p^{\text{N}} w_h 
			\\
			&- \sum_{K \in \mathcal{E}_h}\int_K \del[1]{ \lambda_v^n \kappa\nabla p_{c,v}^n - \lambda_a^n \kappa\nabla p_{c,a}^n - \kappa\del[0]{\rho \lambda}_t^n\boldsymbol{g} }\cdot \nabla w_h
			\\
			&+\sum_{e \in \Gamma_h }\int_e \cbr[1]{ \lambda_v^n \kappa\nabla p_{c,v}^n\cdot \boldsymbol{n}_e}\sbr[1]{w_h} 
			- \sum_{e \in \Gamma_h}\int_e \cbr[1]{\lambda_a^n \kappa\nabla p_{c,a}^n \cdot \boldsymbol{n}_e}\sbr[1]{w_h}
			\\
			&- \sum_{e \in \Gamma_h}\int_e \cbr[1]{\kappa\del[0]{\rho\lambda}_t^n\boldsymbol{g} \cdot \boldsymbol{n}_e}\sbr[1]{w_h}.
		\end{split}
		\label{eq:fh_pressure}
	\end{equation}
	%
	We recall that $\lambda_t^n, \,(\rho \lambda)_t^n, \, \lambda_i^n$ for $i=v,\ell,a$ are the functions
	$\lambda_t, \, (\rho\lambda)_t, \, \lambda_i$ evaluated at the discrete solutions (discrete pressures and saturations) at time $t_n$.
	The penalty parameter $\alpha_{p_\ell,e}$ is a positive constant such that $0 < \alpha_{p_\ell,*} \leq \alpha_{p_\ell,e} \leq \alpha_{p_\ell}^*$, and the penalty parameter $\eta_{p_\ell,e}^n$ depends on the absolute permeability and mobilities in the following way:
	\begin{equation}
		\eta_{p_\ell,e}^n = \mathcal{H}\del{\del{\kappa \lambda_t^n}|_{K_1}, \del{\kappa \lambda_t^n}|_{K_2}}, \quad
		\forall e=\partial K_1\cap\partial K_2, 
	\end{equation}
	where $\mathcal{H}$ is the harmonic average function:
	\begin{equation}
		\mathcal{H}(x_1,x_2) = \frac{2 x_1 x_2}{x_1+x_2}.
		\label{eq:harmonic}
	\end{equation}
	
	\subsection{Aqueous saturation}
	
	The discrete problem for the aqueous saturation reads: find $S_{a_h}^{n+1} \in X_h$ such that $S_{a_h}^{n+1} = s_a^\mathrm{bdy}$ on $\Gamma_\D^{s_a}$ and such that the following relation is satisfied for all $w_h \in X_{h,\Gamma_\D^{s_a}}$:
	\begin{equation}
		\frac{1}{\tau}(\phi S_{a_h}^{n+1},w_h) + b_{a}^{n}(S_{a_h}^{n+1},w_h) \\
		= \frac{1}{\tau} (\phi S_{a_h}^{n},w_h) + f_a^{n}(w_h),
		\label{eq:disc_Sa}
	\end{equation}
	where, $b_{a}^{n}(S_{a_h}^{n+1},w_h) = b_{a}(S_{a_h}^{n+1},w_h; P_{h}^{n+1}, S_{a_h}^n, S_{v_h}^n)$, $f_{a}^{n}(w_h) = 
	f_{a}(w_h; P_{h}^{n+1}, S_{a_h}^n, S_{v_h}^n)$, with $b_{a}$ and $f_{a}$ defined as
	\begin{equation}
		\begin{split}
			b_{a}(v_h,w_h;&P_{h}^{n+1}, S_{a_h}^n, S_{v_h}^n) = \sum_{K \in \mathcal{E}_h}\int_K \kappa\lambda_a^n \del[1]{\partial_{s_a}p_{c,a}}^{+,n} \nabla v_h\cdot \nabla w_h 
			\\
			&-\sum_{e \in \Gamma_h }\int_e \cbr{\kappa \lambda_a^n \del[1]{\partial_{s_a}p_{c,a}}^{+,n} \nabla v_h \cdot \boldsymbol{n}_e}\sbr[1]{w_h} 
			+ \sum_{e \in \Gamma_h }\alpha_{s_a,e}h_e^{-1}\int_e \eta_{s_a,e}^n \sbr[1]{v_h} \sbr[1]{w_h} 
			\\
			&+ \theta_{s_a} \sum_{e \in \Gamma_h }\int_e \cbr{\kappa \lambda_a^n \del[1]{\partial_{s_a}p_{c,a}}^{+,n} \nabla w_h \cdot \boldsymbol{n}_e}\sbr[1]{v_h},
		\end{split}
		\label{eq:ah_aqueous_saturation}
	\end{equation}
	and
	\begin{equation}
		\label{eq:fadef}
		\begin{split}
			f_a(w_h;&P_{h}^{n+1}, S_{a_h}^n, S_{v_h}^n) = (q_a^{n+1}, w_h) +
			\sum_{K \in \mathcal{E}_h}\int_K \del{\lambda_a^n \boldsymbol{u}_h^{n+1} + \kappa \rho_a \lambda_a^n \boldsymbol{g}} \cdot \nabla w_h 
			+ \sum_{e \in \Gamma_{\text{N}}^{s_a}}\int_e j_{s_a}^{\text{N}} w_h 
			\\
			&- \sum_{e \in \Gamma_h }\int_e \left( \lambda_a^n\right)^{\uparrow}_{s_a} \boldsymbol{u}_h^{n+1} \cdot \boldsymbol{n}_e  \sbr[1]{w_h} 
			- \sum_{e \in \Gamma_h }\int_e \cbr{ \rho_a\kappa \lambda_a^n\boldsymbol{g}\cdot\boldsymbol{n}_e} \sbr[1]{w_h}.
		\end{split}
	\end{equation}
	%
	In \eqref{eq:fadef}, the vector  $\boldsymbol{u}_h^{n+1}$ is the projection of the approximation of the
	Darcy velocity onto the Raviart--Thomas space $\mathbb{RT}_0$ (see the exact definition of operator  $\Pi_\mathrm{RT}$ in Section~\ref{sec:RT}):  
	\[
	\boldsymbol{u}_h^{n+1} = \Pi_{\mathrm{RT}}(-\kappa \nabla P_h^{n+1}).
	\]
	The upwind operator $(\cdot)^{\uparrow}_{s_a}$ is defined as follows. For readibility, let $D = \lambda_a^n$ and $D^g = \rho_a\kappa \lambda_a^n$. For an interior edge $e$ shared by two elements $K_1$ and $K_2$, we have 
	\begin{equation}
		(D)^{\uparrow}_{s_a}  = \begin{cases}
			D|_{K_1} & \text{ if } \{ D \boldsymbol{u}_h^{n+1}  + D^g \boldsymbol{g} \}_\frac12 \cdot \bfn_e  \geq 0,
			\\
			D|_{K_2} & \text{ otherwise }.
		\end{cases}
	\end{equation}
		%
		
		The penalty parameter $\alpha_{s_a,e}$ is a positive constant such that $0 < \alpha_{s_a,*} \leq \alpha_{s_a,e} \leq \alpha_{s_a}^*$, and the parameter $\eta_{s_a,e}^n$ is defined on the interior faces by
		\begin{equation}
			\eta_{s_a,e}^n = \mathcal{H}\del{\del{\kappa(\partial_{s_a} p_{c,a})^{+,n} \lambda_a^n}|_{K_1}, \del{\kappa(\partial_{s_a} p_{c,a})^{+,n}\lambda_a^n}|_{K_2}}, \quad \forall e = \partial K_1\cap\partial K_2.
		\end{equation}
		%
		
		\subsection{Vapor saturation}
		
		The discrete problem for the vapor saturation reads: find $S_{v_h}^{n+1} \in X_h$ such that $S_{v_h}^{n+1} = s_v^\mathrm{bdy}$ on $\Gamma_\D^{s_v}$ and such that the following relation is satisfied for all $w_h \in X_{h,\Gamma_\D^{s_v}}$:
		\begin{equation}
			\frac{1}{\tau}(\phi S_{v_h}^{n+1},w_h) + b_{v}^{n}(S_{v_h}^{n+1},w_h) \\
			= \frac{1}{\tau}(\phi S_{v_h}^{n},w_h) + f_{v}^{n}(w_h),
			\label{eq:disc_Sv}
		\end{equation}
		where $b_{v}^{n}(S_{v_h}^{n+1},w_h) = b_{v}(S_{v_h}^{n+1},w_h;P_{h}^{n+1}, S_{a_h}^{n+1}, S_{v_h}^n)$, $f_{v}^{n}(w_h) = f_{v}(w_h;P_{h}^{n+1}, S_{a_h}^{n+1}, S_{v_h}^n)$, with $b_{v}$ and $f_{v}$ defined as
		\begin{equation}
			\begin{split}
				b_{v}(v_h,w_h;&P_{h}^{n+1}, S_{a_h}^{n+1}, S_{v_h}^n) = \sum_{K \in \mathcal{E}_h}\int_K \kappa\lambda_v^n \partial_{s_v}p_{c,v}^n \nabla v_h\cdot \nabla w_h 
				\\
				&-\sum_{e \in \Gamma_h }\int_e \cbr{\kappa \lambda_v^n \partial_{s_v}p_{c,v}^n \nabla v_h \cdot \boldsymbol{n}_e}\sbr[1]{w_h} 
				+ \sum_{e \in \Gamma_h }\alpha_{s_v,e}h_e^{-1}\int_e \eta_{s_v,e}^n\sbr[1]{v_h} \sbr[1]{w_h} 
				\\
				&+ \theta_{s_v} \sum_{e \in \Gamma_h }\int_e \cbr{\kappa \lambda_v^n \partial_{s_v}p_{c,v}^n \nabla w_h \cdot \boldsymbol{n}_e}\sbr[1]{v_h},
			\end{split}
			\label{eq:ah_vapor_saturation}
		\end{equation}
		and
		\begin{equation}
			\begin{split}
				f_{v}(w_h;&P_{h}^{n+1}, S_{a_h}^{n+1}, 	S_{v_h}^n) = (q_v^{n+1}, w_h) + \sum_{K \in \mathcal{E}_h}\int_K \del{\lambda_v^n \boldsymbol{u}_h^{n+1} + \kappa \rho_v \lambda_v^n \boldsymbol{g}}\cdot \nabla w_h 
				\\
				&+ \sum_{e \in \Gamma_{\text{N}}^{s_v}}\int_e j_{s_v}^{\text{N}} w_h 
				- \sum_{e \in \Gamma_h }\int_e 	\left( \lambda_v^n\right)^\uparrow_{s_v}  \boldsymbol{u}_h^{n+1}\cdot\boldsymbol{n}_e \sbr[1]{w_h} 
				- \sum_{e \in \Gamma_h }\int_e \cbr{\rho_v\kappa \lambda_v^n\boldsymbol{g}\cdot\boldsymbol{n}_e} \sbr[1]{w_h},
			\end{split}
		\end{equation}
		where $(\cdot)^{\uparrow}_{s_v}$ denotes the upwind average operator which is defined similarly as $(\cdot)^{\uparrow}_{s_a}$, but with $D = \lambda_v^n$ and $D^g = \rho_v \kappa \lambda_v^n$. The penalty parameter $\alpha_{s_v,e}$ is a positive constant such that $0 < \alpha_{s_v,*} \leq \alpha_{s_v,e} \leq \alpha_{s_v}^*$, and $\eta_{s_v,e}^n$ is defined by
		\begin{equation}
			\eta_{s_v,e}^n  = \mathcal{H}\del{\del{\kappa (\partial_{s_v}p_{c,v})^n \lambda_v^n}|_{K_1}, \del{\kappa (\partial_{s_v}p_{c,v})^n \lambda_v^n}|_{K_1}}.
		\end{equation}
		%
		
		\subsection{Starting the algorithm}
		To start the algorithms, we choose the $L^2$ projections of the unknowns at time $t_0$. Let $\Pi_h$ be the
		$L^2$ projection onto $X_h$. 
		\begin{equation}\label{eq:L2projinit}
			P_h^0 = \Pi_{h} p_\ell^0, \quad
			S_{a_h}^0 = \Pi_{h} s_a^0, \quad
			S_{v_h}^0 = \Pi_{h} s_v^0,
		\end{equation}
		where $p_\ell^0, s_a^0, s_v^0$ are the exact solutions at time $t_0$.
		
		\subsection{Raviart--Thomas projection}
		\label{sec:RT}
		
		The Raviart-Thomas projection, $\bfu_h^{n+1} = \Pi_\mathrm{RT}(- \kappa  \nabla P_h^{n+1})$ is defined by the following equations:
		%
		\begin{subequations}
			\begin{align}
				\int_e \boldsymbol{u}_h^{n+1} \cdot \boldsymbol{n}_e q_h &= -\int_e \cbr[0]{\kappa \nabla P_h^{n+1}\cdot \boldsymbol{n}_e} q_h + \alpha_{p_\ell,e} h_e^{-1} \int_e \eta_{p_\ell,e}^n \sbr[0]{P_h^{n+1}} q_h, & \forall q_h \in \mathbb{Q}_0(e), \forall e \in \Gamma_h, \label{eq:RTproj1}
				\\
				\int_e \boldsymbol{u}_h^{n+1} \cdot \boldsymbol{n}_e q_h &= -\int_e \kappa \nabla P_h^{n+1}\cdot \boldsymbol{n}_e q_h, &\forall q_h \in \mathbb{Q}_0(e), \forall e \in  \partial\Omega. 
				\label{eq:RTproj2}
			\end{align}
		\end{subequations}
		This projection was introduced for elliptic PDEs in~\cite{Ern:2007} for spaces of the same order; we apply here to Raviart-Thomas spaces with a degree less than the DG spaces.
		
		%
		\section{Preliminaries}
		\label{sec:preliminaries}
		
		In this section, we establish some notation and recall some well-known results from finite element analysis that will be used in the rest of the paper. Finally, we list the hypotheses assumed in this work.
		
		\subsection{Notation and useful results}
		
		The L$^2$ norm over a set $D$ is denoted by $\Vert \cdot\Vert_{L^2(D)}$. When $D = \Omega$, the subscript will be omitted. Let us define the space $X(h) = X_h + H^2(\Omega)$. For functions $w \in X(h)$, we define the broken gradient $\nabla_h w$ by $(\nabla_h w)|_K = \nabla(w|_K)$. The space $X(h)$ is endowed with the \textit{coercivity} norm for all $w \in X(h)$ 
		\begin{equation}
			\vertiii{w} := \del{\norm[1]{\nabla_h w}^2 + \abs[1]{w}_{\text{J}}^2}^{1/2}, \quad
			\abs[1]{w}_{\text{J}} = \del{\sum_{e \in \Gamma_h }h_e^{-1}\norm[1]{\sbr[0]{w}}_{L^2(e)}^2}^{1/2}.
			\label{eq:dif_norm}
		\end{equation}
		Additionally, we introduce the following norm on $X(h)$:
		\begin{equation}
			\vertiii{w}_{*} := \del{\vertiii{w}^2 + \sum_{K \in \mathcal{E}_h}h_K \norm[1]{\nabla_h w|_K\cdot \boldsymbol{n}_K}_{L^2(\partial K)}^2 }^{1/2}.
			\label{eq:stab_norm}
		\end{equation}

		The following classical finite element results will be used in the analysis carried out in \cref{sec:existence_uniqueness} and \cref{sec:error_analysis}.
		\begin{lemma}[Trace inequality]
			Let $\mathcal{E}_h$ be a shape-regular mesh with parameter $C_{\text{shape}}$. Then, for all $w_h \in X_h$, all $K \in \mathcal{E}_h$ and all $e \in \partial K$, we have
			\begin{equation}
				\norm[1]{w_h}_{L^2(e)} \leq C_{\text{tr}}h_K^{-1/2} \norm[1]{w_h}_{L^2(K)},
				\label{eq:trace_ineq}
			\end{equation}
			where $C_{\text{tr}} > 0$ depends only on $C_{\text{shape}}$.
		\end{lemma}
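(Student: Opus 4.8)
The plan is to reduce the estimate to a fixed reference element and then invoke equivalence of norms on the finite-dimensional local polynomial space. Fix $K \in \mathcal{E}_h$ and a face $e \subset \partial K$, and let $\hat K$ be the reference simplex when $\mathcal{E}_h$ consists of simplices, or the reference unit cube when $\mathcal{E}_h$ consists of quadrilateral or hexahedral elements. Let $F_K : \hat K \to K$ be the associated map (affine in the simplicial case, multilinear in the quadrilateral/hexahedral case), let $\hat e = F_K^{-1}(e)$, and set $\hat w = w_h \circ F_K$. Since $F_K$ respects the polynomial structure used to define $X_h$, the pullback $\hat w$ belongs to the fixed, finite-dimensional space $\mathbb{P}_1(\hat K)$ or $\mathbb{Q}_1(\hat K)$.

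On that reference space all norms are equivalent; in particular there is a constant $\hat C$, depending only on $\hat K$ and its faces (hence universal), such that $\norm{\hat v}_{L^2(\hat e)} \le \hat C \norm{\hat v}_{L^2(\hat K)}$ for every $\hat v$ in the reference space. The remaining work is to transfer this inequality back to $K$ via the change-of-variables formulas. For simplices $F_K$ is affine, so its Jacobian determinant $J_K$ and the surface Jacobian $J_e$ of $F_K|_{\hat e}$ are constants, and one has $\norm{w_h}_{L^2(e)}^2 = J_e \norm{\hat w}_{L^2(\hat e)}^2$ and $\norm{w_h}_{L^2(K)}^2 = J_K \norm{\hat w}_{L^2(\hat K)}^2$. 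Shape-regularity then furnishes $J_e \le C h_K^{d-1}$ and $J_K \ge c h_K^{d}$ with $c, C$ depending only on $C_{\text{shape}}$, and combining these with the reference inequality yields the claim with $C_{\text{tr}}^2 = \hat C^2 C / c$.

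The one point genuinely requiring care --- and the main obstacle --- is the quadrilateral/hexahedral case, where $F_K$ is bilinear (resp. trilinear) and hence $J_K$ and $J_e$ are no longer constant on $\hat K$. There I would use shape-regularity to obtain pointwise two-sided bounds $c\, h_K^{d} \le |\det \nabla F_K| \le C\, h_K^{d}$ and $c\, h_K^{d-1} \le J_e \le C\, h_K^{d-1}$ with $c, C$ depending only on $C_{\text{shape}}$; inserting these into the change-of-variables formulas and applying the reference inequality to $\hat w$ again produces $\norm{w_h}_{L^2(e)} \le C_{\text{tr}} h_K^{-1/2} \norm{w_h}_{L^2(K)}$. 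Since this is a classical estimate, an alternative is simply to cite a standard finite element reference; it is recorded here because the dependence of $C_{\text{tr}}$ on $C_{\text{shape}}$ alone is used repeatedly in \cref{sec:existence_uniqueness} and \cref{sec:error_analysis}.
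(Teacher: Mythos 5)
Your argument is the standard scaling proof and it is essentially correct; note that the paper itself does not prove this lemma at all --- it is recalled as a classical finite element result (trace/inverse-trace inequality for discrete functions on shape-regular meshes), so you are supplying a proof where the paper simply cites folklore. Your reduction to the reference element, norm equivalence on a fixed finite-dimensional space, and the two-sided Jacobian bounds $c\,h_K^{d}\le |\det \nabla F_K|\le C\,h_K^{d}$, $c\,h_K^{d-1}\le J_e\le C\,h_K^{d-1}$ from shape-regularity give exactly $C_{\text{tr}}$ depending only on $C_{\text{shape}}$, which is what the paper uses later.

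One small inaccuracy worth fixing: the paper defines $\mathbb{Q}_1(K)$ by monomials in the \emph{physical} coordinates, so for a genuine (non-parallelogram) quadrilateral or hexahedron the pullback $\hat w = w_h\circ F_K$ under the bilinear/trilinear map $F_K$ is generally \emph{not} in $\mathbb{Q}_1(\hat K)$; composing a $\mathbb{Q}_1$ physical polynomial with $F_K$ (whose components are themselves in $\mathbb{Q}_1(\hat K)$) lands you in $\mathbb{Q}_d(\hat K)$, e.g.\ $\mathbb{Q}_2(\hat K)$ in two dimensions. This does not damage the proof: $\hat w$ still lies in a \emph{fixed} finite-dimensional polynomial space independent of $K$, so the reference-element norm equivalence $\norm[0]{\hat v}_{L^2(\hat e)}\le \hat C\norm[0]{\hat v}_{L^2(\hat K)}$ applies verbatim with a universal $\hat C$, and the rest of your argument goes through unchanged. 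With that adjustment (or by citing a standard reference, as you suggest), the proof is complete.
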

		\begin{lemma}[Discrete Poincar\'e inequality \cite{Brenner:2003}]
			For all $w$ in the broken Sobolev space $H^1(\mathcal{E}_h)$, there exists a constant $C_P > 0$ independent of $h$ such that
			\begin{equation}
				\norm[1]{w} \leq C_P \vertiii{w}.
				\label{eq:poincare}
			\end{equation}
		\end{lemma}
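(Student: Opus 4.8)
The plan is to prove the discrete Poincaré inequality \eqref{eq:poincare} by a duality argument against an auxiliary Poisson problem, in the spirit of \cite{Brenner:2003}. Given $w \in H^1(\mathcal{E}_h)$ (the case $w \equiv 0$ being trivial, so assume $\|w\| \neq 0$), I would introduce $\phi$ solving $-\Delta \phi = w$ in $\Omega$ with a homogeneous Dirichlet condition $\phi = 0$ on $\partial\Omega$ (or the Neumann problem with data $w$ shifted by its mean, in the variant where $w$ carries no vanishing trace). Under the elliptic regularity assumption on $\Omega$ — satisfied e.g. when $\Omega$ is convex — one has $\phi \in H^2(\Omega)$ together with $\norm{\phi}_{H^2(\Omega)} \leq C_{\mathrm{reg}} \norm{w}$. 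Integrating by parts element by element then gives the key identity
\[
\norm{w}^2 = (w,-\Delta\phi) = \sum_{K\in\mathcal{E}_h}\int_K \nabla_h w\cdot\nabla\phi \;-\; \sum_{K\in\mathcal{E}_h}\int_{\partial K} w\,\nabla\phi\cdot\boldsymbol{n}_K .
\]

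Next I would reorganize the face sum. Since $\nabla\phi \in H(\mathrm{div},\Omega)$, its normal trace is single-valued across each interior face $e\in\Gamma_h$, so the contributions of the two elements sharing $e$ combine into $-\int_e \sbr[0]{w}\,\nabla\phi\cdot\boldsymbol{n}_e$, while the boundary contributions are absorbed by the boundary condition imposed on $w$ (or vanish outright for the Neumann auxiliary problem, since then $\nabla\phi\cdot\boldsymbol{n}=0$ on $\partial\Omega$). This yields $\norm{w}^2 = \sum_K\int_K\nabla_h w\cdot\nabla\phi - \sum_{e\in\Gamma_h}\int_e\sbr[0]{w}\,\nabla\phi\cdot\boldsymbol{n}_e$. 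The volume term is bounded by $\norm{\nabla_h w}\,\norm{\nabla\phi} \leq \norm{\nabla_h w}\,\norm{\phi}_{H^2}$ via Cauchy–Schwarz. For the face term I would insert the scaling $h_e^{-1/2}\cdot h_e^{1/2}$ and apply Cauchy–Schwarz on $e$ and then over faces to obtain $\abs{w}_{\mathrm{J}}\big(\sum_{e\in\Gamma_h}h_e\norm{\nabla\phi\cdot\boldsymbol{n}_e}_{L^2(e)}^2\big)^{1/2}$, with $\abs{w}_{\mathrm{J}}$ as in \eqref{eq:dif_norm}; the last factor is then controlled by a scaled (continuous) trace inequality applied componentwise to $\nabla\phi \in H^1(\Omega)^d$, namely $\sum_{e\in\Gamma_h}h_e\norm{\nabla\phi\cdot\boldsymbol{n}_e}_{L^2(e)}^2 \leq C\big(\norm{\nabla\phi}^2 + h^2\norm{\nabla^2\phi}^2\big) \leq C\norm{\phi}_{H^2}^2$, the constant depending only on the shape-regularity parameter $C_{\mathrm{shape}}$. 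Combining the two bounds gives $\norm{w}^2 \leq C\,\vertiii{w}\,\norm{\phi}_{H^2} \leq C\,C_{\mathrm{reg}}\,\vertiii{w}\,\norm{w}$, and dividing by $\norm{w}$ produces \eqref{eq:poincare} with $C_P = C\,C_{\mathrm{reg}}$ independent of $h$.

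The main obstacle I anticipate is not any individual estimate but the careful handling of the boundary face terms: the identity above closes cleanly only once the trace of $w$ on $\partial\Omega$ is accounted for, so one must either exploit that $w$ vanishes on a subset of $\partial\Omega$ of positive measure — precisely the situation in which \eqref{eq:poincare} is invoked for functions in $X_{h,\Gamma_\D^{p_\ell}}$, $X_{h,\Gamma_\D^{s_a}}$, $X_{h,\Gamma_\D^{s_v}}$ — or pass to the zero-mean version and add a term bounding $\big|\int_\Omega w\big|$. A secondary point is that the scaled trace estimate is needed for a generic $H^1$ field ($\nabla\phi$), not merely for the discrete functions covered by \eqref{eq:trace_ineq}; this is the standard non-discrete trace inequality and causes no difficulty on a shape-regular mesh. (An alternative route, also due to \cite{Brenner:2003}, replaces the duality step by constructing a conforming approximation of $w$ whose $L^2$-distance to $w$ is controlled by $\abs{w}_{\mathrm{J}}$ and then applying the classical Poincaré inequality; I would keep the duality argument above as the primary plan since it is shorter and self-contained given elliptic regularity.)
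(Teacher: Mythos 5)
The paper does not prove this lemma at all: it is quoted verbatim from \cite{Brenner:2003}, so there is no internal argument to compare yours against, and any self-contained proof you give is ``extra''. Your duality plan is the classical alternative to Brenner's approach, and its mechanical steps are sound: the elementwise integration by parts, the single-valuedness of $\nabla\phi\cdot\boldsymbol{n}_e$ across interior faces (since $\phi\in H^2(\Omega)$), the $h_e^{-1/2}\cdot h_e^{1/2}$ splitting giving the factor $\abs{w}_{\text{J}}$, and the scaled continuous trace inequality for $\nabla\phi\in H^1(\Omega)^d$ are all correct and $h$-uniform on a shape-regular mesh.

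The genuine gap is the one you name in your last paragraph but do not actually close: the boundary term $\int_{\partial\Omega} w\,\nabla\phi\cdot\boldsymbol{n}$. Note first that the lemma as printed (all $w\in H^1(\mathcal{E}_h)$, with $\abs{w}_{\text{J}}$ running over interior faces only) is literally false -- a nonzero constant has $\vertiii{w}=0$ -- so any proof must use either a mean-value term or a boundary condition; the version the paper needs is the Friedrichs-type one for functions vanishing on a Dirichlet portion $\Gamma_\D^{p_\ell}$ (resp.\ $\Gamma_\D^{s_a}$, $\Gamma_\D^{s_v}$) of positive measure, not on all of $\partial\Omega$. For that case your primary plan does not go through as written: with the homogeneous Dirichlet auxiliary problem the term $\int_{\partial\Omega\setminus\Gamma_\D^{p_\ell}} w\,\nabla\phi\cdot\boldsymbol{n}$ survives and is not controlled by $\vertiii{w}$; the natural repair, a mixed auxiliary problem with $\phi=0$ on $\Gamma_\D^{p_\ell}$ and $\nabla\phi\cdot\boldsymbol{n}=0$ elsewhere, generically fails to have the $H^2$ regularity your estimate $\norm{\phi}_{H^2(\Omega)}\leq C_{\mathrm{reg}}\norm{w}$ requires (even on convex polygons, because of the change of boundary-condition type); and the Neumann variant only bounds $\norm{w-\bar w}$, leaving the mean -- which is the whole point of the Friedrichs inequality -- still to be estimated. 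In addition, the duality route needs convexity or smoothness of $\Omega$, an assumption the paper never makes. Brenner's own proof avoids both issues by comparing $w$ with a conforming (or piecewise-constant) approximation whose $L^2$-distance to $w$ is controlled by the jump terms and then invoking the standard Poincar\'e--Friedrichs inequality; that is the parenthetical alternative you mention, and it -- not the duality argument -- is the route that yields the lemma in the generality in which the paper uses it. So: right toolbox, but the case the paper actually relies on (trace zero only on part of $\partial\Omega$, general Lipschitz polygonal domain) is not covered by your main argument.
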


		We denote by $\pi_{h,\Gamma}$ the $L^2$-orthogonal projection onto $X_{h,\Gamma}$ for $\Gamma \in \{\Gamma_\D^{p_\ell}, \Gamma_\D^{s_a}, \Gamma_\D^{s_v}\}$. The following lemma recalls approximation estimates that are later used in the analysis of the numerical scheme introduced in \cref{sec:spatial_discretization}.
		\begin{lemma}[$L^2$-orthogonal projection approximation bounds]
			For any element $K \in \mathcal{E}_h$, for all $s \in \cbr[1]{0,1,2}$ and all $w \in H^s(K)$, there holds
			\begin{equation}
				\abs[1]{w - \pi_{h,\Gamma}w}_{H^m(K)} \leq C h_K^{s-m}\abs[1]{w}_{H^s(K)}, \quad \forall m \in \cbr[1]{0,\ldots,s},
				\label{eq:L2projerrors}
			\end{equation}
			where $C$ is independent of both $K$ and $h_K$. Moreover, if $s \geq 1$, then for all $K \in \mathcal{E}_h$ and all $e \in \partial K$, there holds
			\begin{equation}
				\norm[1]{w - \pi_{h,\Gamma} w}_{L^2(e)} \leq C h_K^{s - 1/2}\abs[1]{w}_{H^s(K)},
				\label{eq:L2projerrors_face}
			\end{equation}
			and if $s \geq 2$,
			\begin{equation}
				\norm[1]{\nabla\del[1]{w - \pi_{h,\Gamma}w}|_K\cdot\boldsymbol{n}_K}_{L^2(e)} \leq C h_K^{s - 3/2}\abs[1]{w}_{H^s(K)}.
				\label{eq:L2projerrors_gradface}
			\end{equation}
		\end{lemma}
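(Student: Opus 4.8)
The argument is the classical one for $L^2$-type projections: localize to a single element, reduce to a fixed reference element with the Bramble--Hilbert lemma, scale back, and then pass from element to face estimates via a trace inequality. First, since the functions of $X_h$ carry no inter-element continuity and the constraint ``$v=0$ on $\Gamma$'' only concerns traces on boundary faces, each of which lies on a unique element, the space $X_{h,\Gamma}$ is the direct sum over $K\in\mathcal{E}_h$ of the local spaces $V_K:=\cbr{v\in\mathbb{Q}_1(K):v=0\text{ on }\partial K\cap\Gamma}$ (with $\mathbb{P}_1(K)$ in place of $\mathbb{Q}_1(K)$ on simplices). Hence $\pi_{h,\Gamma}w|_K$ is the $L^2(K)$-orthogonal projection $P_K(w|_K)$ onto $V_K$, and it suffices to prove the three bounds element by element. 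For every $K$ with $\partial K\cap\Gamma=\emptyset$ --- all but a one-element-wide layer along $\Gamma$ --- one has $V_K=\mathbb{Q}_1(K)\supseteq\mathbb{P}_1$, so $P_K$ reproduces $\mathbb{P}_{s-1}$ whenever $s\le 2$; the elements meeting $\Gamma$ are discussed at the end.

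Fix the reference element $\hat K$ and let $\hat P$ be the $L^2(\hat K)$-projection onto the corresponding local space. The operator $\hat E:=I-\hat P$ is linear and maps $H^s(\hat K)$ boundedly into $H^m(\hat K)$ for all $0\le m\le s\le 2$, because $\hat P\hat w$ lies in the fixed finite-dimensional space $\mathbb{Q}_1(\hat K)$, on which all Sobolev norms are equivalent, and $\hat P$ is an $L^2(\hat K)$-contraction, so $\norm{\hat P\hat w}_{H^m(\hat K)}\le C\norm{\hat w}_{L^2(\hat K)}\le C\norm{\hat w}_{H^s(\hat K)}$. Since $\hat E$ annihilates $\mathbb{P}_{s-1}$, the Bramble--Hilbert (Deny--Lions) lemma gives $\norm{\hat E\hat w}_{H^m(\hat K)}\le C\abs{\hat w}_{H^s(\hat K)}$. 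Pulling this back through the element map $F_K\colon\hat K\to K$ (affine on simplices, multilinear on quadrilaterals and hexahedra) and using shape-regularity of $\mathcal{E}_h$ to control $F_K$ and its inverse yields $\abs{w-\pi_{h,\Gamma}w}_{H^m(K)}\le Ch_K^{s-m}\abs{w}_{H^s(K)}$, with $C$ depending only on $\hat K$, the local space and $C_{\text{shape}}$; this is \eqref{eq:L2projerrors}.

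For the face estimates I would combine \eqref{eq:L2projerrors} with the standard scaled multiplicative trace inequality $\norm{\varphi}_{L^2(e)}^2\le C\del{h_K^{-1}\norm{\varphi}_{L^2(K)}^2+h_K\norm{\nabla\varphi}_{L^2(K)}^2}$, valid for all $\varphi\in H^1(K)$ and $e\subset\partial K$. Taking $\varphi=w-\pi_{h,\Gamma}w$ (legitimate when $s\ge1$) and inserting the $m=0$ and $m=1$ cases of \eqref{eq:L2projerrors} gives $\norm{w-\pi_{h,\Gamma}w}_{L^2(e)}^2\le Ch_K^{2s-1}\abs{w}_{H^s(K)}^2$, i.e.\ \eqref{eq:L2projerrors_face}. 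Taking instead $\varphi=\nabla(w-\pi_{h,\Gamma}w)$, which lies in $H^1(K)$ once $s\ge2$, bounding $\abs{\nabla\varphi\cdot\boldsymbol{n}_K}\le\abs{\nabla\varphi}$ pointwise, and inserting the $m=1$ and $m=2$ cases gives $\norm{\nabla(w-\pi_{h,\Gamma}w)|_K\cdot\boldsymbol{n}_K}_{L^2(e)}^2\le Ch_K^{2s-3}\abs{w}_{H^s(K)}^2$, i.e.\ \eqref{eq:L2projerrors_gradface}.

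The one genuinely delicate point is the reproduction property on the boundary layer of elements meeting $\Gamma$: there $V_K$ does not contain the nonzero constants, so the Bramble--Hilbert step of the second paragraph cannot be run verbatim, and in fact a nonzero constant already shows the $s=1$ estimate fails on such an element if $w$ is completely arbitrary. It is recovered by restricting to functions with vanishing trace on $\partial K\cap\Gamma$, replacing ``$\hat P$ reproduces $\mathbb{P}_{s-1}$'' by a Poincar\'e--Friedrichs inequality on $\hat K$ for that class --- which is precisely how the lemma is used in the error analysis, after the strongly imposed Dirichlet data have been subtracted. Away from $\Gamma$ everything is textbook, the only remaining routine care being the non-affine element maps on general quadrilaterals and hexahedra, handled by shape-regularity as usual.
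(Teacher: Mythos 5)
The paper never proves this lemma: it is stated as a recalled classical result (``The following lemma recalls approximation estimates\dots''), so there is no in-paper argument to compare yours against. Your proof is the standard one -- localization of the $L^2$ projection, passage to the reference element, Bramble--Hilbert, scaling, and the multiplicative trace inequality for \eqref{eq:L2projerrors_face} and \eqref{eq:L2projerrors_gradface} -- and it is essentially correct. Two remarks. First, on general quadrilateral/hexahedral elements the pullback of the local $L^2(K)$ projection is a \emph{weighted} $L^2(\hat K)$ projection (the Jacobian is not constant), so ``pulling $\hat P$ back through $F_K$'' is not literally the element projection; the clean fix is to use the best-approximation property, $\norm[0]{w-P_Kw}_{L^2(K)}\leq\norm[0]{w-I_Kw}_{L^2(K)}$ for any interpolant $I_K$, together with an inverse inequality for the higher seminorms, which your shape-regularity hypothesis covers. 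Second, your observation about elements abutting $\Gamma$ is a genuine point that the paper glosses over: since $\pi_{h,\Gamma}$ projects onto the constrained space $X_{h,\Gamma}$, the local space on a boundary element does not contain constants and \eqref{eq:L2projerrors} as literally stated fails for, say, a nonzero constant $w$; the estimates do hold either for the unconstrained projection or, as you note, for functions whose trace on $\partial K\cap\Gamma$ is compatible (vanishing after the strongly imposed Dirichlet data are subtracted), via a Poincar\'e--Friedrichs/compactness version of Bramble--Hilbert on the constrained class -- which is exactly the setting in which the lemma is invoked in the error analysis, where it is applied to the exact solutions. So your write-up is not only a valid substitute for the omitted proof, it is more careful than the paper's implicit use of the result.
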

		\noindent Note that these results imply that
		\begin{equation}
			\vertiii{w - \pi_{h,\Gamma} w}_{*} \leq Ch_K^{s-1}\abs[1]{w}_{H^s(\Omega)}.
			\label{eq:StabNorm_L2projerrors}
		\end{equation}
		The projected velocity $\bfu_h^{n+1}$ defined by \eqref{eq:RTproj1}-\eqref{eq:RTproj2}
		satisfies the following approximation bound. 
		\begin{lemma}
			Assume $p_\ell$ belongs to $L^2(0,T;H^{2}(\Omega))$. 
			There is a positive constant independent of $h$ and $\tau$ such that
			\begin{equation}
				\Vert \bfu_h^{n+1} + \kappa \nabla_h P_h^{n+1} \Vert \leq 
				C \vertiii{P_h^{n+1}-p_\ell^{n+1}} + C h.
				\label{eq:RT_estimate}
			\end{equation}
		\end{lemma}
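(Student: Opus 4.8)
The plan is to estimate the error $\bfu_h^{n+1} + \kappa\nabla_h P_h^{n+1}$ by comparing the Raviart--Thomas projection of $-\kappa\nabla_h P_h^{n+1}$ against the Raviart--Thomas projection of $-\kappa\nabla p_\ell^{n+1}$, using a triangle inequality and the defining equations \eqref{eq:RTproj1}--\eqref{eq:RTproj2}. First I would split
\[
\bfu_h^{n+1} + \kappa\nabla_h P_h^{n+1} = \del[1]{\Pi_{\mathrm{RT}}(-\kappa\nabla_h P_h^{n+1}) - \Pi_{\mathrm{RT}}(-\kappa\nabla p_\ell^{n+1})} + \del[1]{\Pi_{\mathrm{RT}}(-\kappa\nabla p_\ell^{n+1}) + \kappa\nabla p_\ell^{n+1}} + \kappa\nabla_h(P_h^{n+1} - p_\ell^{n+1}).
\]
The middle term is the genuine approximation error of the $\mathbb{RT}_0$ projection applied to the smooth field $-\kappa\nabla p_\ell^{n+1}$ (note $p_\ell^{n+1}\in H^2(\Omega)$ makes $-\kappa\nabla p_\ell^{n+1}$ admissible and its normal trace single-valued, so the penalty contribution in \eqref{eq:RTproj1} vanishes); standard Raviart--Thomas interpolation estimates bound it by $Ch\,\abs{p_\ell^{n+1}}_{H^2(\Omega)}$, which is absorbed into the $Ch$ term. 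The last term is bounded by $\|\nabla_h(P_h^{n+1}-p_\ell^{n+1})\| \le \vertiii{P_h^{n+1}-p_\ell^{n+1}}$ directly from the definition \eqref{eq:dif_norm} of the coercivity norm.

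The main work is the first (discrete) term. Since it lies in $\mathbb{RT}_0$, an element-by-element scaling argument reduces its $L^2(\Omega)$ norm to the face functionals defining it, i.e. to $\sum_{e} h_e \|\,[\![(\bfu_h^{n+1}-\Pi_{\mathrm{RT}}(-\kappa\nabla p_\ell^{n+1}))\cdot\boldsymbol{n}_e]\!]\,\|_{L^2(e)}^2$ type quantities together with boundary contributions — here one uses that a lowest-order Raviart--Thomas function on a shape-regular element is controlled by its normal moments on the faces, with the $h_e$ weight coming from scaling. By \eqref{eq:RTproj1}, on an interior face $e$ this normal-moment difference equals
\[
-\int_e \cbr[1]{\kappa\nabla_h(P_h^{n+1}-p_\ell^{n+1})\cdot\boldsymbol{n}_e}q_h + \alpha_{p_\ell,e}h_e^{-1}\int_e \eta_{p_\ell,e}^n \sbr[1]{P_h^{n+1}}q_h,
\]
using $\sbr[0]{p_\ell^{n+1}}=0$, and an analogous expression without the penalty term on boundary faces via \eqref{eq:RTproj2}. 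Choosing $q_h$ proportional to the relevant normal moment and applying Cauchy--Schwarz, the first piece is handled by the trace inequality \eqref{eq:trace_ineq} (applied elementwise to $\nabla_h(P_h^{n+1}-p_\ell^{n+1})$, which requires care since $p_\ell^{n+1}$ is only $H^2$, but the smooth part's gradient trace is controlled through \eqref{eq:L2projerrors_gradface} after inserting $\pi_{h,\Gamma}p_\ell^{n+1}$) and bounded by $C\vertiii{P_h^{n+1}-p_\ell^{n+1}} + Ch\,\abs{p_\ell^{n+1}}_{H^2(\Omega)}$; the second piece is exactly $\alpha_{p_\ell,e}h_e^{-1}\eta_{p_\ell,e}^n$ times the jump of $P_h^{n+1} = P_h^{n+1}-p_\ell^{n+1}$ (again $\sbr[0]{p_\ell^{n+1}}=0$), which after the $h_e$ scaling weight yields a multiple of $\abs{P_h^{n+1}-p_\ell^{n+1}}_{\mathrm J}^2 \le \vertiii{P_h^{n+1}-p_\ell^{n+1}}^2$, the boundedness of $\eta_{p_\ell,e}^n$ and $\alpha_{p_\ell,e}$ being guaranteed by the stated bounds on the penalty parameters and the harmonic average.

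Collecting the three contributions and taking square roots gives $\|\bfu_h^{n+1}+\kappa\nabla_h P_h^{n+1}\| \le C\vertiii{P_h^{n+1}-p_\ell^{n+1}} + Ch$, with $C$ depending only on $\kappa$, the shape-regularity constant, the penalty bounds, and $\abs{p_\ell^{n+1}}_{H^2(\Omega)}$ (finite a.e.\ in time by the hypothesis $p_\ell\in L^2(0,T;H^2(\Omega))$). The step I expect to be the main obstacle is the clean reduction of the discrete $\mathbb{RT}_0$ term to its face normal moments with the correct $h_e$ weights on a general shape-regular (possibly quadrilateral/hexahedral) mesh, and in particular making rigorous the trace estimate for $\nabla p_\ell^{n+1}$ on faces when $p_\ell^{n+1}$ is only assumed to be in $H^2$ — this is where one must route through the $L^2$-projection error bounds \eqref{eq:L2projerrors_face}--\eqref{eq:L2projerrors_gradface} rather than applying a discrete trace inequality directly.
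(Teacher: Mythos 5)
Your proposal is correct and rests on the same core machinery as the paper's proof: the reference-element norm equivalence that controls an $\mathbb{RT}_0$ field by its face normal moments after scaling, the defining relations \eqref{eq:RTproj1}--\eqref{eq:RTproj2}, the identity $\sbr[0]{p_\ell^{n+1}}=0$, the trace inequality \eqref{eq:trace_ineq}, and insertion of $\pi_{h,\Gamma_\D^{p_\ell}}p_\ell^{n+1}$ together with \eqref{eq:L2projerrors_face}--\eqref{eq:L2projerrors_gradface}. The route differs in the splitting: the paper never introduces a projection of the exact flux; it sets $\bfchi=\bfu_h^{n+1}+\kappa\nabla P_h^{n+1}$, reads the face moments of $\bfchi$ directly from \eqref{eq:RTproj1}--\eqref{eq:RTproj2} (zero on $\partial\Omega$, a gradient jump plus the penalty term on interior faces), applies the scaling argument to $\bfchi$ itself, and only then compares $P_h^{n+1}$ with $p_\ell^{n+1}$. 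Your three-term decomposition buys a cleaner scaling step, since $\Pi_{\mathrm{RT}}(-\kappa\nabla_h P_h^{n+1})-\Pi_{\mathrm{RT}}(-\kappa\nabla p_\ell^{n+1})$ genuinely lies in $\mathbb{RT}_0$, but it costs two ingredients the paper avoids: a definition of $\Pi_{\mathrm{RT}}$ acting on the exact flux (the operator of Section~\ref{sec:RT} is tied to the DG potential through the penalty term, so you must, as you do, drop that term using $\sbr[0]{p_\ell^{n+1}}=0$) and an interpolation estimate for it. On the latter, note that your claim that the normal trace of $-\kappa\nabla p_\ell^{n+1}$ is single-valued holds only where $\kappa$ does not jump: under \ref{hyp_kappa} the permeability is merely piecewise constant, and on a face where it jumps the weighted-average moments differ from the one-sided ones by a multiple of $\sbr[0]{\kappa}\nabla p_\ell^{n+1}\cdot\boldsymbol{n}_e$, which is not $O(h)$; this is a shared caveat rather than a gap specific to your argument, since the paper's own proof makes the same tacit simplification when it factors $\kappa$ out of the face identity and bounds $\norm[0]{\sbr[0]{\nabla P_h^{n+1}}}_{L^2(e)}$ by $\norm[0]{\sbr[0]{\nabla(P_h^{n+1}-p_\ell^{n+1})}}_{L^2(e)}$. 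The remaining estimates in your outline (penalty term via $\abs[0]{P_h^{n+1}-p_\ell^{n+1}}_{\text{J}}$, average term via the trace inequality and the projection bounds after inserting $\pi_{h,\Gamma_\D^{p_\ell}}p_\ell^{n+1}$, and the volume term $\kappa\nabla_h(P_h^{n+1}-p_\ell^{n+1})$ via the coercivity norm) coincide with the paper's.
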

		\begin{proof}	
			The proof of this bound follows an argument in
			\cite{BastianRiviere} and we present its main points.
			Let us denote
			\[
			\bfchi = \bfu_h^{n+1} + \kappa \nabla P_h^{n+1}.
			\]
			Then, from \eqref{eq:RTproj1}-\eqref{eq:RTproj2}, we have for any $K, K' \in \mathcal{E}_h$, and any $e\subset \partial K$,
			\begin{subequations}
				\begin{align*}
					\int_e \bfchi|_E\cdot\bfn_e q_h &= \frac12  \int_e \kappa(\nabla P_h^{n+1}|_E  
					- \nabla P_h^{n+1}|_{E'})  \cdot \bfn_e q_h
					+ \alpha_{p_\ell,e}h_e^{-1} \int_e \eta_{p_\ell,e}^n [P_h^{n+1}] q_h, \quad &e=\partial K \cap \partial K',
					\\
					\int_e \bfchi|_E\cdot\bfn_e q_h &= 0, \quad &e\subset\partial \Omega.
				\end{align*}
			\end{subequations}
			Let us take $q_h = \bfchi\cdot\bfn_e$ in the above; this is allowed because $P_h^{n+1}$ is piecewise linear and $\kappa$ is assumed to be piecewise constant 
			(see  \ref{hyp_kappa}).
			For edges on the boundary we have
			\[
			\Vert \bfchi|_E\cdot\bfn_e \Vert_{L^2(e)} = 0.
			\]
			For interior edges, we apply Cauchy-Schwarz's inequality:
			\[
			\Vert \bfchi|_E\cdot\bfn_e \Vert_{L^2(e)} \leq C \Vert  
			[\nabla P_h^{n+1}]\cdot\bfn_e\Vert_{L^2(e)}
			+ C h_e^{-1} \Vert [ P_h^{n+1}]\Vert_{L^2(e)}.
			\]
			We now bound $\Vert \bfchi\Vert_{L^2(K)}$ by passing to the reference element, by using the fact that
			$\Vert \cdot\Vert_{L^2(\partial \hat{K})}$ is a norm for the Raviart-Thomas space restricted to $\hat{K}$ and
			by going back to the physical element:
			\[
			\Vert \bfchi\Vert_{L^2(K)} \leq C h \Vert \hat{\bfchi}\Vert_{L^2(\hat{K})} \leq C h \Vert \hat{\bfchi}\Vert_{L^2(\partial\hat{K})}
			\leq  C h^{1/2} \Vert \bfchi\Vert_{L^2(\partial K)}.
			\]
			We apply the bounds above:
			\[
			\Vert \bfchi\Vert_{L^2(K)} \leq C h^{1/2} \sum_{e\in\partial E\setminus \partial\Omega} \Vert  
			[\nabla P_h^{n+1}]\cdot\bfn_e\Vert_{L^2(e)}
			+ C \sum_{e\in\partial E\setminus \partial\Omega} h^{-1/2} \Vert [ P_h^{n+1}]\Vert_{L^2(e)}.
			\]
			Taking the square and summing over all the elements:
			\[
			\Vert \bfchi\Vert^2 \leq C h \sum_{K \in \mathcal{E}_h} \del[2]{ \sum_{e\in\partial K\setminus \partial\Omega} \Vert  
				[\nabla P_h^{n+1}]\cdot\bfn_e\Vert_{L^2(e)}}^2
			+ C \sum_{K \in \mathcal{E}_h} \del[2]{\sum_{e\in\partial K\setminus \partial\Omega} h^{-1/2} \Vert [ P_h^{n+1}]\Vert_{L^2(e)}}^2.
			\]
			The last term is bounded above by $\vertiii{P_h^{n+1}-p_\ell^{n+1}}^2$ since $[p_\ell]=0$.
			For the first term, we write for $e=\partial K\cap\partial K'$
			\[
			\Vert [\nabla P_h^{n+1}]\Vert_{L^2(e)} \leq
			\Vert  [\nabla (P_h^{n+1}-p_\ell^{n+1})]\Vert_{L^2(e)}.
			\]
			Clearly, we have
			\[
			\Vert  [\nabla (P_h^{n+1}-p_\ell^{n+1})]\Vert_{L^2(e)}
			\leq C (\Vert \nabla (P_h^{n+1}-p_\ell^{n+1})|_K \Vert_{L^2(e)}
			+ \Vert \nabla (P_h^{n+1}-p_\ell^{n+1})|_{K'} \Vert_{L^2(e)}).
			\]  
			We add and subtract the $L^2$ projection of $p_\ell^{n+1}$ onto $X_h$:
			\[
			\Vert \nabla (P_h^{n+1}-p_\ell^{n+1})|_K \Vert_{L^2(e)}
			\leq \Vert \nabla (P_h^{n+1}-\pi_{h,\Gamma_D^{p_\ell}}p_\ell^{n+1})|_K \Vert_{L^2(e)}
			+ \Vert \nabla (\pi_{h,\Gamma_D^{p_\ell}}p_\ell^{n+1} - p_\ell^{n+1})|_K \Vert_{L^2(e)}
			\]
			\[
			\leq C h^{-1/2} \Vert \nabla (P_h^{n+1} - \pi_{h,\Gamma_D^{p_\ell}}p_\ell^{n+1})\Vert_{L^2(K)}
			+ C h^{1/2} \Vert p_\ell^{n+1}\Vert_{H^2(K)}.
			\]
			So
			\[
			h \sum_{K \in \mathcal{E}_h} \del[2]{\sum_{e\in\partial K\backslash \partial\Omega} \Vert [ \nabla (P_h^{n+1}-p_\ell^{n+1})\Vert_{L^2(e)}}^2
			\leq C \sum_{K \in \mathcal{E}_h} \Vert \nabla (P_h^{n+1} - \pi_{h,\Gamma_D^{p_\ell}}p_\ell^{n+1})\Vert_{L^2(K)}^2
			+ C h^2 \Vert p_\ell^{n+1}\Vert_{H^2(\Omega)}^2,
			\]
			or 
			\[
			\leq C \vertiii{P_h^{n+1}-p_\ell^{n+1}}^2 + C h^2 \Vert p_\ell^{n+1}\Vert_{H^2(\Omega)}^2.
			\]
			Combining all the bounds we have
			\[
			\Vert \bfchi\Vert \leq C \vertiii{P_h^{n+1}-p_\ell^{n+1}}
			+ C h.
			\]
		\end{proof}
		
		\subsection{Hypotheses}
		
		In the remaining of the paper, the following assumptions are made on the input data.
		\begin{enumerate}[label=\textbf{H.\arabic*}]
			\item \label{hyp_bounds} The nonlinear functions $\lambda_i$, for $i=v,\ell,a$, are $C^2$ functions with respect to time. Moreover, we have the following bounds:
			%
			\begin{equation}
				\begin{array}{ccccccc}
					0 &<& \underline{C}_{\del[0]{\rho\lambda}_t} &\leq & \del[1]{\rho\lambda}_t
					&\leq& \overline{C}_{\del[0]{\rho\lambda}_t},
					\\
					0 &<& \underline{C}_{\lambda_i} 
					&\leq& \lambda_i 
					&\leq& \overline{C}_{\lambda_i},
					\\
					0 &<& \underline{C}_{\lambda_t} 
					&\leq& \lambda_t
					&\leq& \overline{C}_{\lambda_t},
					\\
					0 &<& \kappa_* 
					&\leq& \kappa 
					&\leq& \kappa^*,
					\\
					0 &<& \underline{C}_{p_{c,a}} 
					&\leq& \del[1]{\partial_{s_a}p_{c,a}}^{+} 
					&\leq& \overline{C}_{p_{c,a}},
					\\
					0 &\leq & \underline{C}_{p_{c,v}} 
					&\leq&  \partial_{s_v}p_{c,v}   
					&\leq& \overline{C}_{p_{c,v}}.
				\end{array}
			\end{equation}
			\begin{remark}
				We note that the above bounds also hold when these functions are evaluated with discrete solutions by using cutoff in the definition of the above functions.
			\end{remark}
			
			\item \label{hyp_lipschitz} The following functions are Lipschitz continuous, so that we have
			\begin{equation}
				\begin{array}{ccc}
					\abs[1]{\lambda_i(s_{a_1}, s_{v_1}) - \lambda_i(s_{a_2}, s_{v_2})} &\leq& L\del{\abs[1]{s_{a_1} - s_{a_2}} + \abs[1]{s_{v_1} - s_{v_2}}},
					\\
					\abs[1]{\partial_{s_v} p_{c,v}(s_{v_1}) - \partial_{s_v} p_{c,v}(s_{v_2})} & \leq&  L \abs[1]{s_{v_1} - s_{v_2}},  
					\\
					\abs[1]{\partial_{s_v} p_{c,a}(s_{a_1}) - \partial_{s_v} p_{c,a}(s_{a_2})} &\leq& L \abs[1]{s_{a_1} - s_{a_2}}.
				\end{array}
			\end{equation}
			
			\item \label{hyp_pcgradients} The functions $\nabla p_{c,a}$ and $\nabla p_{c,v}$ are bounded, so that we have 
			\begin{equation}
				\begin{array}{ccccccc}
					0 \leq \underline{C}_{\nabla p_{c_a}} \leq \norm[1]{\nabla p_{c_a}}_{L^\infty(\Omega)} &\leq& \overline{C}_{\nabla p_{c_a}},
					\\
					0 \leq \underline{C}_{\nabla p_{c_v}} \leq \norm[1]{\nabla p_{c_v}}_{L^\infty(\Omega)} &\leq& \overline{C}_{\nabla p_{c_v}},
				\end{array}
			\end{equation}
			and they satisfy the following growth conditions
			\begin{equation}
				\begin{array}{ccc}
					\norm[1]{\nabla p_{c_a}(s_{a_1}) - \nabla p_{c_a}(s_{a_2})} &\leq& L\norm[1]{s_{a_1} - s_{a_2}},
					\\
					\norm[1]{\nabla p_{c_v}(s_{v_1}) - \nabla p_{c_v}(s_{v_2})} &\leq& L\norm[1]{s_{v_1} - s_{v_2}}.
				\end{array}
			\end{equation}
			We remark that, even though this hypothesis might be somewhat restrictive, it has been used before in e.g., \cite{chen2001error,radu2018robust}. For instance, in \cite{chen2001error}, assumptions (A5) and (A7) state that the functions $\gamma_1$ and $\gamma_2$, which contain the gradient of the capillary pressure, are bounded and Lipschitz continuous with respect to the primary unknown $\theta$.
			
			\item \label{hyp_boundedsource} The source terms  $q_i$ are smooth enough, $q_i \in L^\infty(0,T;L^\infty(\Omega))$,
			for $i=\ell,v,a$.

			\item \label{hyp_kappa} The absolute permeability $\kappa$ is piecewise constant.
			
			
			
		\end{enumerate}

		\section{Existence and uniqueness}
		\label{sec:existence_uniqueness}
		
		In the following, we denote by $p_\ell$, $s_a$ and $s_v$ the exact solutions to \cref{eq:liquid_pressure_eq}, \cref{eq:aqueous_saturation_eq} and \cref{eq:vapor_saturation_eq}. We assume that the exact solutions are smooth enough, more precisely $p_\ell, s_v, s_a\in C^2(0,T;L^{2}(\Omega)) \cap C^0(0,T;H^2(\Omega)) \cap L^\infty(0,T;W^{1,\infty}(\Omega))$.
		
		
		

		For readability, we denote by $\tilde{\lambda}_t, \tilde{p}_{c,a}, \tilde{p}_{c,v}, \tilde{\lambda}_i$ for $i=v,\ell,a$ the functions $\phi, \lambda_t, p_{c,a}, p_{c,v}, \lambda_i$ evaluated at the exact solutions (pressures and saturations) at time $t$.  If the time is $t_n$, then the functions are denoted by $\tilde{\phi}^n, \tilde{\rho}_t^n, \tilde{p}_{c,a}^n, \tilde{p}_{c,v}^n, \tilde{\lambda}_i^n$. For instance, we will write:
		\[
		\tilde{\lambda}_a^n = \lambda_a(s_a^n, s_v^n), \quad \lambda_a^n = \lambda_a(S_{a_h}^n, S_{v_h}^n).
		\]
		
		Existence and uniqueness of $P_h^{n+1}, S_{a_h}^{n+1}, S_{v_h}^{n+1}$ follow from the linearity of \eqref{eq:disc_pl}, \eqref{eq:disc_Sa}, \eqref{eq:disc_Sv} with respect to their unknowns and from the coercivity and continuity of the forms $b_{p}$, $b_{a}$ and $b_{v}$.
		\subsection{Liquid pressure}
		
		\begin{lemma}[Consistency of $b_{p}$]
			We have  for any $n \geq 0$    and any $w_h \in X_{h,\Gamma_\D^{p_\ell}}$
			\begin{equation}
				\tilde{b}^{n+1}_{p}(p_\ell^{n+1}, w_h) = \tilde{f}^{n+1}_{p}(w_h), 
				\label{eq:consistency_pressure}
			\end{equation}
			where
			\begin{equation}
				\tilde{b}^{n+1}_{p}(p_\ell,w_h) = b_{p}(p_\ell, w_h; p_\ell^{n+1}, s_a^{n+1}, s_v^{n+1}), \quad \text{and} 
				\quad \tilde{f}^{n+1}_{p}(w_h) = f_{p}(w_h; p_\ell^{n+1}, s_a^{n+1}, s_v^{n+1}).
			\end{equation}
			\label{lem:consistency_pressure}
		\end{lemma}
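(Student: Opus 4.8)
The plan is to obtain \eqref{eq:consistency_pressure} directly from the strong form \eqref{eq:liquid_pressure_eq} of the liquid pressure equation at time $t=t_{n+1}$, following the standard consistency (Galerkin orthogonality) argument for interior penalty DG schemes. I would fix $w_h\in X_{h,\Gamma_\D^{p_\ell}}$, multiply \eqref{eq:liquid_pressure_eq} at $t_{n+1}$ by $w_h$, integrate over an arbitrary element $K\in\mathcal{E}_h$, and apply Green's formula on $K$ to each of the four flux terms $\tilde\lambda_t^{n+1}\kappa\nabla p_\ell^{n+1}$, $\tilde\lambda_v^{n+1}\kappa\nabla\tilde p_{c,v}^{n+1}$, $\tilde\lambda_a^{n+1}\kappa\nabla\tilde p_{c,a}^{n+1}$, $\kappa\,\widetilde{(\rho\lambda)}_t^{n+1}\boldsymbol{g}$. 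This produces on $K$ exactly the volume integrals appearing in $\tilde b_p^{n+1}(p_\ell^{n+1},w_h)$ and $\tilde f_p^{n+1}(w_h)$, the source pairing with $q_t^{n+1}$, and a collection of element-boundary integrals of the form $\int_{\partial K}(\text{flux})\cdot\boldsymbol{n}_K\,w_h$.

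The next step is to sum over $K\in\mathcal{E}_h$ and regroup the boundary integrals by faces; the smoothness of the exact solution enters here. Since $p_\ell^{n+1},s_a^{n+1},s_v^{n+1}\in H^2(\Omega)$, on every interior face $e$ one has $\sbr[0]{p_\ell^{n+1}}=0$, so the penalty contribution $\alpha_{p_\ell,e}h_e^{-1}\int_e\eta_{p_\ell,e}^{n+1}\sbr[0]{p_\ell^{n+1}}\sbr[0]{w_h}$ and the symmetrizing contribution $\theta_{p_\ell}\int_e\cbr[0]{\tilde\lambda_t^{n+1}\kappa\nabla w_h\cdot\boldsymbol{n}_e}\sbr[0]{p_\ell^{n+1}}$ both vanish and may be added freely to the left-hand side. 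Moreover, the $H^2$ regularity makes the normal traces of $\nabla p_\ell^{n+1}$, $\nabla\tilde p_{c,v}^{n+1}$, $\nabla\tilde p_{c,a}^{n+1}$ single-valued on $e$, and the mobilities and $\widetilde{(\rho\lambda)}_t^{n+1}$, being evaluated at the continuous exact solutions, are single-valued as well; consequently each weighted average $\cbr[0]{\cdot}$ of a flux over $e$ reduces to that flux, and for $e=\partial K_1\cap\partial K_2$ the two element-boundary integrals combine, after orienting by $\boldsymbol{n}_e$, into integrals against $\sbr[0]{w_h}$ that are precisely the consistency term $-\int_e\cbr[0]{\tilde\lambda_t^{n+1}\kappa\nabla p_\ell^{n+1}\cdot\boldsymbol{n}_e}\sbr[0]{w_h}$ of \eqref{eq:ah_pressure} together with the three capillary/gravity face terms of \eqref{eq:fh_pressure}.

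It then remains to treat the boundary faces: the integrals on $\Gamma_\D^{p_\ell}$ vanish since $w_h=0$ there, and those on $\Gamma_\N^{p_\ell}$ assemble into $-\int_e\bigl(\tilde\lambda_t^{n+1}\kappa\nabla p_\ell^{n+1}+\tilde\lambda_v^{n+1}\kappa\nabla\tilde p_{c,v}^{n+1}-\tilde\lambda_a^{n+1}\kappa\nabla\tilde p_{c,a}^{n+1}-\kappa\,\widetilde{(\rho\lambda)}_t^{n+1}\boldsymbol{g}\bigr)\cdot\boldsymbol{n}\,w_h$, which by the Neumann condition \eqref{eq:Neumann_BC_pressure} equals $-\int_e j_p^{\N}w_h$; moving it to the right-hand side yields the Neumann term of \eqref{eq:fh_pressure}. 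Collecting the volume terms, the reassembled interior-face terms, the vanishing penalty and symmetrization terms, and the boundary terms gives exactly $\tilde b_p^{n+1}(p_\ell^{n+1},w_h)=\tilde f_p^{n+1}(w_h)$.

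I expect the only real work to be bookkeeping rather than analysis: one must carefully track the signs and the $\boldsymbol{n}_e$ orientation for each of the four flux pieces and check that every face and boundary integral produced by the element-wise Green's formula matches the corresponding term in \eqref{eq:ah_pressure}--\eqref{eq:fh_pressure}. The one point deserving a clean statement is that the global $H^2$ regularity of the exact solution, together with $\kappa$ piecewise constant (Hypothesis~\ref{hyp_kappa}), is what makes the exact normal fluxes well defined and single-valued on $\Gamma_h$, so that the weighted averages collapse and the interior-face contributions cancel correctly. The very same argument, with the upwind average inserted in the convective term, will later give the analogous consistency lemmas for $b_a$ and $b_v$.
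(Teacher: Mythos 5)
Your proposal is correct and follows essentially the same argument as the paper: elementwise integration by parts, the vanishing of the penalty and symmetrization terms because the smooth exact solution has zero jumps, the collapse of the weighted averages to the single-valued exact normal fluxes on interior faces, and the use of the Neumann condition \eqref{eq:Neumann_BC_pressure} together with $w_h=0$ on $\Gamma_\D^{p_\ell}$ for the boundary faces. The paper simply runs the same bookkeeping in the opposite direction, integrating $\tilde b_p^{n+1}(p_\ell^{n+1},\cdot)$ and $\tilde f_p^{n+1}$ by parts back to the strong form \eqref{eq:liquid_pressure_eq} rather than starting from it.
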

		
		\begin{proof}
			First, note that
			\begin{equation}
				\tilde{b}^{n+1}_{p}(p_\ell,w_h) = \sum_{K \in \mathcal{E}_h}\int_K \tilde{\lambda}_t^{n+1}\kappa \nabla p_\ell\cdot \nabla w_h 
				-\sum_{e \in \Gamma_h }\int_e \tilde{\lambda}_t^{n+1} \kappa \nabla p_\ell\cdot \boldsymbol{n}_e \sbr[1]{w_h}.
			\end{equation}
			In the rest of the proof, we drop the superscript $(n+1)$ for readability, but it is understood that all functions are evaluated at time
			$t_{n+1}$. 
			Applying integration by parts on the first term, we obtain
			\begin{equation}
				\tilde{b}_{p}(p_\ell,w_h) =-\sum_{K \in \mathcal{E}_h}\int_K \nabla\cdot\del[1]{\tilde{\lambda}_t\kappa \nabla p_\ell} w_h 
				+ \sum_{K \in \mathcal{E}_h}\int_{\partial K}\tilde{\lambda}_t\kappa\nabla p_\ell \cdot \boldsymbol{n}_K w_h
				-\sum_{e \in \Gamma_h }\int_e \tilde{\lambda}_t \kappa \nabla p_\ell\cdot \boldsymbol{n}_e \sbr[1]{w_h}.
			\end{equation}
			%
			%
			%
			Using the fact that $\sbr[1]{\tilde{\lambda}_t\kappa\nabla p_\ell\cdot \boldsymbol{n}_e} = 0$ on interior faces, we obtain
			%
			\begin{equation}
				\begin{split}
					\tilde{b}_{p}(p_\ell,w_h) =& -\sum_{K \in \mathcal{E}_h}\int_K \nabla\cdot\del[1]{\tilde{\lambda}_t\kappa \nabla p_\ell} w_h 
					+ \sum_{e \in \Gamma_{\text{N}}^{p_\ell}}\int_{e} \tilde{\lambda}_t \kappa\nabla p_\ell \cdot \boldsymbol{n}_e w_h.
				\end{split}
			\end{equation}
			On the other hand, after integration by parts on the volume term of \cref{eq:fh_pressure}, we have
			\begin{equation}
				\begin{split}
					\tilde{f}_{p}(w_h) &= (q_t, w_h)
					+ \sum_{K \in \mathcal{E}_h}\int_K \nabla\cdot\del[1]{ \tilde{\lambda}_v \kappa\nabla \tilde{p}_{c,v} - \tilde{\lambda}_a \kappa \nabla \tilde{p}_{c,a} - \kappa\del[0]{\rho \tilde{\lambda}}_t\boldsymbol{g} } w_h
					\\
					& - \sum_{K \in \mathcal{E}_h}\int_{\partial K} \del[1]{\tilde{\lambda}_v \kappa\nabla \tilde{p}_{c,v} - \tilde{\lambda}_a \kappa\nabla \tilde{p}_{c,a} - \kappa\del[0]{\rho \tilde{\lambda}}_t \boldsymbol{g} }\cdot \boldsymbol{n}_K w_h
					\\
					& + \sum_{e \in \Gamma_{\text{N}}^{p_\ell}}\int_e j_p^{\text{N}} w_h
					+\sum_{e \in \Gamma_h }\int_e \cbr[1]{\tilde{\lambda}_v \kappa\nabla \tilde{p}_{c,v}\cdot \boldsymbol{n}_e}\sbr[1]{w_h} 
					- \sum_{e \in \Gamma_h }\int_e \cbr[1]{\tilde{\lambda}_a \kappa\nabla \tilde{p}_{c,a} \cdot \boldsymbol{n}_e}\sbr[1]{w_h}
					\\
					&- \sum_{e \in \Gamma_h }\int_e \cbr[1]{\kappa\del[0]{\rho \tilde{\lambda}}_t\boldsymbol{g} \cdot \boldsymbol{n}_e}\sbr[1]{w_h}.
				\end{split}
			\end{equation}
			Using that $\sbr[1]{\del[1]{\tilde{\lambda}_v \kappa\nabla \tilde{p}_{c,v} - \tilde{\lambda}_a \kappa \nabla \tilde{p}_{c,a} - \kappa\del[0]{\rho \tilde{\lambda}}_t \boldsymbol{g}}\cdot \boldsymbol{n}} = 0$ on interior faces, we obtain
			\begin{multline}
				\tilde{f}_{p}(w_h) = (q_t, w_h)
				+ \sum_{K \in \mathcal{E}_h}\int_K 	\nabla\cdot\del[1]{\tilde{\lambda}_v \kappa\nabla \tilde{p}_{c,v} - \tilde{\lambda}_a \kappa\nabla \tilde{p}_{c,a} - \kappa\del[0]{\rho \tilde{\lambda}}_t\boldsymbol{g} } w_h
				\\
				- \sum_{e \in \Gamma_{\text{N}}^{p_\ell}}\int_e \tilde{\lambda}_v \kappa\nabla \tilde{p}_{c,v} \cdot \boldsymbol{n}_e w_h 
				+ \sum_{e \in \Gamma_{\text{N}}^{p_\ell}}\int_e \tilde{\lambda}_a \kappa\nabla \tilde{p}_{c,a} \cdot \boldsymbol{n}_e w_h 
				+ \sum_{e \in \Gamma_{\text{N}}^{p_\ell}}\int_e \kappa\del[0]{\rho \tilde{\lambda}}_t \boldsymbol{g} \cdot \boldsymbol{n}_e w_h
				+ \sum_{e \in \Gamma_{\text{N}}^{p_\ell}}\int_e j_p^{\text{N}} w_h.
			\end{multline}
			Recalling that $p_\ell$ solves \cref{eq:liquid_pressure_eq} and satisfies the boundary condition \cref{eq:Neumann_BC_pressure}, the result \cref{eq:consistency_pressure} follows.
		\end{proof}
		
		\begin{lemma}
			For all $\del[1]{v_h, w_h} \in X_h \times X_h$, the following relation is satisfied for all $n\geq 0$
			\begin{equation}
				\abs{\sum_{e \in \Gamma_h } \int_e \cbr[1]{\lambda_t^n \kappa\nabla v_h \cdot \boldsymbol{n}_e}\sbr[1]{w_h}} 
				\leq 
				\overline{C}_{\lambda_t} \kappa^* \del{\sum_{K \in \mathcal{E}_h} \sum_{e \in \partial K} h_e \norm[1]{\nabla v_h|_K \cdot \boldsymbol{n}_e}_{L^2(e)}^2 }^{1/2}\abs[1]{w_h}_{\text{J}}.
				\label{eq:bound_consistency_pressure}
			\end{equation}
		\end{lemma}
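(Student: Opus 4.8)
The plan is to reduce \eqref{eq:bound_consistency_pressure} to a face-by-face estimate via the Cauchy--Schwarz inequality and then to control the weighted average $\cbr[0]{\lambda_t^n\kappa\nabla v_h\cdot\boldsymbol{n}_e}$ using the uniform bounds on $\lambda_t^n$ and $\kappa$ from \ref{hyp_bounds} (valid when these coefficients are evaluated at the discrete solutions, by the cutoff remark). First I would apply Cauchy--Schwarz on each interior face, insert the factors $h_e^{\pm 1/2}$ to balance the mesh size, apply a discrete Cauchy--Schwarz over the faces, and recognize the jump seminorm:
\[
\abs{\sum_{e\in\Gamma_h}\int_e\cbr[1]{\lambda_t^n\kappa\nabla v_h\cdot\boldsymbol{n}_e}\sbr[1]{w_h}}
\le\del[2]{\sum_{e\in\Gamma_h}h_e\norm[1]{\cbr[1]{\lambda_t^n\kappa\nabla v_h\cdot\boldsymbol{n}_e}}_{L^2(e)}^2}^{1/2}\abs[1]{w_h}_{\text{J}}.
\]
It then remains to bound the first factor by $\overline{C}_{\lambda_t}\kappa^*\del[1]{\sum_{K\in\mathcal{E}_h}\sum_{e\in\partial K}h_e\norm[1]{\nabla v_h|_K\cdot\boldsymbol{n}_e}_{L^2(e)}^2}^{1/2}$.

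For a fixed interior face $e=\partial K_1\cap\partial K_2$, write $A_i=\del[0]{\lambda_t^n\kappa}|_{K_i}$ and $g_i=\nabla v_h|_{K_i}\cdot\boldsymbol{n}_e$, so that by the definition of the average operator $\cbr[0]{\lambda_t^n\kappa\nabla v_h\cdot\boldsymbol{n}_e}=\omega_1A_1g_1+\omega_2A_2g_2$ with $\omega_1,\omega_2\ge0$ and $\omega_1+\omega_2=1$. Combining the pointwise bound $0<A_i\le\overline{C}_{\lambda_t}\kappa^*$ from \ref{hyp_bounds} with the convexity inequality $(\omega_1a_1+\omega_2a_2)^2\le\omega_1a_1^2+\omega_2a_2^2$, and then with $\omega_i\le1$, gives pointwise on $e$
\[
\abs[1]{\cbr[1]{\lambda_t^n\kappa\nabla v_h\cdot\boldsymbol{n}_e}}^2\le\del[1]{\overline{C}_{\lambda_t}\kappa^*}^2\del[1]{\omega_1\abs{g_1}^2+\omega_2\abs{g_2}^2}\le\del[1]{\overline{C}_{\lambda_t}\kappa^*}^2\del[1]{\abs{g_1}^2+\abs{g_2}^2}.
\]
Integrating over $e$, multiplying by $h_e$, and summing over the interior faces, I would then re-index: each interior face $e=\partial K_1\cap\partial K_2$ contributes its two one-sided terms exactly once each to the double sum $\sum_{K\in\mathcal{E}_h}\sum_{e\in\partial K}h_e\norm[1]{\nabla v_h|_K\cdot\boldsymbol{n}_e}_{L^2(e)}^2$, while the boundary faces only add nonnegative terms, so that
\[
\sum_{e\in\Gamma_h}h_e\norm[1]{\cbr[1]{\lambda_t^n\kappa\nabla v_h\cdot\boldsymbol{n}_e}}_{L^2(e)}^2\le\del[1]{\overline{C}_{\lambda_t}\kappa^*}^2\sum_{K\in\mathcal{E}_h}\sum_{e\in\partial K}h_e\norm[1]{\nabla v_h|_K\cdot\boldsymbol{n}_e}_{L^2(e)}^2.
\]
Taking the square root and substituting into the first display proves \eqref{eq:bound_consistency_pressure}.

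I do not expect a genuine obstacle here; the estimate is essentially routine. The one point that requires care is the handling of the weighted (rather than standard) average: applying $(\omega_1a_1+\omega_2a_2)^2\le\omega_1a_1^2+\omega_2a_2^2$ \emph{before} using $\omega_i\le1$, rather than the cruder bound $(a_1+a_2)^2\le2(a_1^2+a_2^2)$, is what makes the final constant exactly $\overline{C}_{\lambda_t}\kappa^*$ instead of $\sqrt2\,\overline{C}_{\lambda_t}\kappa^*$. The remaining subtlety is purely the bookkeeping involved in re-indexing the interior-face sum as a sum over mesh elements and their faces.
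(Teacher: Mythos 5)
Your proof is correct and follows essentially the same route as the paper's: face-by-face Cauchy--Schwarz with the $h_e^{\pm1/2}$ splitting, the uniform bounds of \ref{hyp_bounds} on $\lambda_t^n\kappa$, a discrete Cauchy--Schwarz over the faces producing $\abs[1]{w_h}_{\text{J}}$, and re-indexing the interior-face sum as the element--face double sum. Your convexity step $(\omega_1 a_1+\omega_2 a_2)^2\le\omega_1 a_1^2+\omega_2 a_2^2$ for the weighted average is a slightly sharper piece of bookkeeping than the paper's bound by the sum of the two one-sided traces, but the argument is otherwise the same.
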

		
		\begin{proof}
			%
			%
			Let us consider a face $e \in \Gamma_h$ that is shared between two elements $K_1$ and $K_2$, i.e., $e = \partial K_1 \cap \partial K_2$. With \ref{hyp_bounds} and Cauchy-Schwarz's inequality, we have
			\begin{equation}
				\int_e \cbr[1]{\lambda_t^n \kappa \nabla v_h \cdot \boldsymbol{n}_e}\sbr[1]{w_h} 
				\leq \overline{C}_{\lambda_t} \kappa^* h_e^{1/2} \del{\norm[1]{\nabla v_h|_{K_1} \cdot \boldsymbol{n}_e }_{L^2(e)}
					+ \norm[1]{\nabla v_h|_{K_2} \cdot \boldsymbol{n}_e}_{L^2(e)}}h_e^{-1/2}\norm[1]{\sbr[0]{w_h}}_{L^2(e)}.
			\end{equation}
			Summing over all faces, applying  Cauchy--Schwarz's inequality and writing the sum in terms of the face contributions for each element, we obtain the result.
		\end{proof}
		
		Next, we show that $b_{p}$ is coercive on $X_{h,\Gamma_\D^{p_\ell}}$.
		
		\begin{lemma}[Coercivity of $b_{p}$]
			Assume that $\alpha_{p_\ell,*}$ satisfies that
			\begin{equation}
				\alpha_{p_\ell,*} > 0.25\del{1 - \theta_{p_\ell}}^2 \del[1]{\overline{C}_{\lambda_t} \kappa^*}^3\del[1] {\underline{C}_{\lambda_t} \kappa_*}^{-3} C_{\text{tr}}^2,
			\end{equation}
			where $C_{\text{tr}}$ results from the trace inequality \cref{eq:trace_ineq}. Then, the bilinear form $b_{p}^n$ defined by \cref{eq:ah_pressure} is coercive on $X_h$ with respect to the norm $\vertiii[1]{\cdot}$ defined by \cref{eq:dif_norm}, i.e., for all $w_h \in X_{h,\Gamma_\D^{p_\ell}}$ and for all $n\geq 0$, the following relation is satisfied:
			\begin{equation}
				b_{p}^n(w_h,w_h) \geq C_{\alpha,p_\ell}\vertiii[1]{w_h}^2,
				\label{eq:coercivity_pressure}
			\end{equation}
			with
			%
			\begin{equation}
				C_{\alpha,p_\ell} = \frac{\alpha_{p_\ell,*}\underline{C}_{\lambda_t} \kappa_*\del[1]{\overline{C}_{\lambda_t} \kappa^*}^{-1} - 0.25\del{1 - \theta_{p_\ell}}^2 		\del[1]{\overline{C}_{\lambda_t} \kappa^*}^2\del[1] {\underline{C}_{\lambda_t} \kappa_*}^{-2} C_{\text{tr}}^2}{1 + \alpha_{p_\ell,*}\underline{C}_{\lambda_t} \kappa_*\del[1]{\overline{C}_{\lambda_t} \kappa^*}^{-1}}.
			\end{equation}
			\label{lem:coercivity_pressure}
		\end{lemma}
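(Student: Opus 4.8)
The plan is to estimate the four terms in $b_p^n(w_h,w_h)$ from \eqref{eq:ah_pressure} separately and then absorb the only indefinite contribution into the remaining ones. With $v_h=w_h$, the third and fourth terms combine into $(\theta_{p_\ell}-1)\sum_{e\in\Gamma_h}\int_e\cbr[0]{\lambda_t^n\kappa\nabla w_h\cdot\boldsymbol{n}_e}\sbr[0]{w_h}$, while the other two are nonnegative. By \ref{hyp_bounds} (with the cutoffs mentioned there, so that the bounds also apply to $\lambda_t^n$ evaluated at the discrete solution), the volume term satisfies $\sum_{K\in\mathcal{E}_h}\int_K\lambda_t^n\kappa\,\abs[0]{\nabla w_h}^2\ge\underline{C}_{\lambda_t}\kappa_*\,\Vert\nabla_h w_h\Vert^2$. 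Since $\eta_{p_\ell,e}^n$ is the harmonic mean of $\del[0]{\kappa\lambda_t^n}|_{K_1}$ and $\del[0]{\kappa\lambda_t^n}|_{K_2}$, and the harmonic mean of two quantities that are $\ge m>0$ is itself $\ge m$, we get $\eta_{p_\ell,e}^n\ge\underline{C}_{\lambda_t}\kappa_*$, hence $\sum_{e\in\Gamma_h}\alpha_{p_\ell,e}h_e^{-1}\int_e\eta_{p_\ell,e}^n\sbr[0]{w_h}^2\ge\alpha_{p_\ell,*}\underline{C}_{\lambda_t}\kappa_*\,\abs[1]{w_h}_{\text{J}}^2$.

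For the indefinite term I would apply the preceding estimate \eqref{eq:bound_consistency_pressure} with $v_h=w_h$, which controls it by $\overline{C}_{\lambda_t}\kappa^*\,\Bigl(\sum_{K\in\mathcal{E}_h}\sum_{e\in\partial K}h_e\Vert\nabla w_h|_K\cdot\boldsymbol{n}_e\Vert_{L^2(e)}^2\Bigr)^{1/2}\,\abs[1]{w_h}_{\text{J}}$. Since $\nabla w_h|_K$ is a constant vector on each $K$, the trace inequality \eqref{eq:trace_ineq} together with $h_e\le h_K$ bounds the square-root factor by $C_{\text{tr}}\Vert\nabla_h w_h\Vert$ (the mesh-dependent bound on the number of faces of an element being absorbed into $C_{\text{tr}}$), so that
\[
\Bigl\lvert(\theta_{p_\ell}-1)\sum_{e\in\Gamma_h}\int_e\cbr[0]{\lambda_t^n\kappa\nabla w_h\cdot\boldsymbol{n}_e}\sbr[0]{w_h}\Bigr\rvert\le\abs[0]{1-\theta_{p_\ell}}\,\overline{C}_{\lambda_t}\kappa^*\,C_{\text{tr}}\,\Vert\nabla_h w_h\Vert\,\abs[1]{w_h}_{\text{J}}.
\]
Summing the three estimates gives $b_p^n(w_h,w_h)\ge\underline{C}_{\lambda_t}\kappa_*\Vert\nabla_h w_h\Vert^2+\alpha_{p_\ell,*}\underline{C}_{\lambda_t}\kappa_*\abs[1]{w_h}_{\text{J}}^2-\abs[0]{1-\theta_{p_\ell}}\overline{C}_{\lambda_t}\kappa^*C_{\text{tr}}\Vert\nabla_h w_h\Vert\abs[1]{w_h}_{\text{J}}$.

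It remains to absorb the cross term with Young's inequality $xy\le\tfrac{\delta}{2}x^2+\tfrac{1}{2\delta}y^2$, taking the weight $\delta$ proportional to $\underline{C}_{\lambda_t}\kappa_*\del[0]{\overline{C}_{\lambda_t}\kappa^*}^{-1}$: then the coefficient of $\Vert\nabla_h w_h\Vert^2$ stays strictly positive and the coefficient of $\abs[1]{w_h}_{\text{J}}^2$ becomes $\alpha_{p_\ell,*}\underline{C}_{\lambda_t}\kappa_*-0.25(1-\theta_{p_\ell})^2\del[0]{\overline{C}_{\lambda_t}\kappa^*}^3\del[0]{\underline{C}_{\lambda_t}\kappa_*}^{-2}C_{\text{tr}}^2$, which is positive precisely under the stated lower bound on $\alpha_{p_\ell,*}$. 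A short rearrangement then writes the resulting lower bound as $C_{\alpha,p_\ell}\vertiii{w_h}^2$, with $\vertiii{w_h}^2=\Vert\nabla_h w_h\Vert^2+\abs[1]{w_h}_{\text{J}}^2$ as in \eqref{eq:dif_norm}. The argument is the classical coercivity proof for interior penalty DG; the only delicate point is this final bookkeeping — choosing the Young weight and carrying out the algebra so that the closed form of $C_{\alpha,p_\ell}$ and the exact threshold on $\alpha_{p_\ell,*}$ come out as stated.
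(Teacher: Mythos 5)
Your proposal follows the paper's argument almost verbatim up to the last step: the same expansion of $b_p^n(w_h,w_h)$, the same lower bounds for the volume and penalty terms (your harmonic-mean bound $\eta_{p_\ell,e}^n \ge \underline{C}_{\lambda_t}\kappa_*$ is in fact slightly sharper than the paper's $\underline{C}_{\lambda_t}^2\kappa_*^2\del[1]{\overline{C}_{\lambda_t}\kappa^*}^{-1}$), and the same control of the cross term via \eqref{eq:bound_consistency_pressure} combined with the trace inequality \eqref{eq:trace_ineq}.

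The one place where you deviate is the absorption step, and that is also where your writeup has a gap. The paper does not use plain Young's inequality there; it uses the tailored algebraic inequality \eqref{eq:important_ineq}, namely $x^2-2\beta xy+cy^2\ge\frac{c-\beta^2}{1+c}\del[1]{x^2+y^2}$ for $c>\beta^2$, applied with $c=\alpha_{p_\ell,*}\underline{C}_{\lambda_t}\kappa_*\del[1]{\overline{C}_{\lambda_t}\kappa^*}^{-1}$ and $\beta=0.5\del{1-\theta_{p_\ell}}C_{\text{tr}}\overline{C}_{\lambda_t}\kappa^*\del[1]{\underline{C}_{\lambda_t}\kappa_*}^{-1}$, and this is precisely what produces the stated threshold on $\alpha_{p_\ell,*}$ and the closed-form $C_{\alpha,p_\ell}$ in one stroke. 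With a single Young weight $\delta$ you instead obtain a bound of the form $\min\del[1]{\text{coefficient of }\norm[1]{\nabla_h w_h}^2,\ \text{coefficient of }\abs[1]{w_h}_{\text{J}}^2}\,\vertiii{w_h}^2$, and with the particular $\delta$ you describe (the one making the jump deduction equal to $0.25\del{1-\theta_{p_\ell}}^2\del[1]{\overline{C}_{\lambda_t}\kappa^*}^3\del[1]{\underline{C}_{\lambda_t}\kappa_*}^{-2}C_{\text{tr}}^2$) the gradient coefficient comes out as $\underline{C}_{\lambda_t}\kappa_*\del[1]{1-\underline{C}_{\lambda_t}\kappa_*\del[1]{\overline{C}_{\lambda_t}\kappa^*}^{-1}}$, which is not strictly positive when $\underline{C}_{\lambda_t}\kappa_*=\overline{C}_{\lambda_t}\kappa^*$, and in no case does the resulting minimum rearrange into the stated $C_{\alpha,p_\ell}$. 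So your estimates do prove coercivity with \emph{some} positive constant under the stated hypothesis (indeed under a slightly weaker one, thanks to your sharper penalty bound), but the ``short rearrangement'' you defer --- keeping both coefficients positive simultaneously and recovering the exact constant and threshold --- is exactly the content of \eqref{eq:important_ineq}, and it is missing from your proposal.
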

		
		\begin{proof}
			Using \cref{eq:ah_pressure}, we have
			\begin{equation}
				\begin{split}
					b_{p}^n(w_h,w_h) &= \sum_{K \in \mathcal{E}_h}\int_K \lambda_t^n \kappa \abs[1]{\nabla w_h}^2
					+ \sum_{e \in \Gamma_h }\alpha_{p_\ell,e}h_e^{-1}\int_e \eta_{p_\ell,e}^n [w_h]^2  
					+(\theta_{p_\ell} - 1) \sum_{e \in \Gamma_h }\int_e \cbr[1]{\lambda_t^n \kappa\nabla w_h\cdot \boldsymbol{n}_e}\sbr[1]{w_h}
					\\
					& \geq \underline{C}_{\lambda_t} \kappa_* \norm[1]{\nabla w_h}^2 + \alpha_{p_\ell,*} \frac{\underline{C}_{\lambda_t}^2 \kappa_*^2}{\overline{C}_{\lambda_t} \kappa^*}\abs[1]{w_h}_{\text{J}}^2
					+ \del[1]{\theta_{p_\ell} - 1}\sum_{e \in \Gamma_h }\int_e \cbr[1]{\lambda_t^n \kappa \nabla w_h\cdot \boldsymbol{n}_e}\sbr[1]{w_h}.
				\end{split}
				\label{eq:ah_w_w}
			\end{equation}
			Using \cref{eq:bound_consistency_pressure}, the trace inequality \cref{eq:trace_ineq} and the fact that for all $K \in \mathcal{E}_h$ and all $e \in \partial K$, $h_e \leq h_K$, we have
			\begin{equation}
				\begin{split}
					\abs{\sum_{e \in \Gamma_h } \int_e \cbr[1]{\lambda_t^n \kappa \nabla w_h \cdot \boldsymbol{n}_e}\sbr[1]{w_h}} & \leq \overline{C}_{\lambda_t} \kappa^* \del{\sum_{K \in \mathcal{E}_h} \sum_{e \in \partial K} h_e \norm[1]{\nabla w_h|_K \cdot \boldsymbol{n}_e}_{L^2(e)}^2 }^{1/2}\abs[1]{w_h}_{\text{J}}
					\\
					&\leq \overline{C}_{\lambda_t} \kappa^* C_{\text{tr}} \norm[1]{\nabla w_h} \abs[1]{w_h}_{\text{J}}.
				\end{split}
			\end{equation}
			Thus, since $\theta_{p_\ell} -1 \leq 0$, we have
			\begin{equation}
				\del[1]{\theta_{p_\ell} - 1}\sum_{e \in \Gamma_h }\int_e \cbr[1]{\lambda_t^n \kappa \nabla w_h\cdot \boldsymbol{n}_e}\sbr[1]{w_h} 
				\geq \del[1]{\theta_{p_\ell} - 1}\overline{C}_{\lambda_t} \kappa^* C_{\text{tr}} \norm[1]{\nabla w_h} \abs[1]{w_h}_{\text{J}}.
			\end{equation}
			Using this in \cref{eq:ah_w_w} and noting that
			$\theta_{p_\ell} - 1$ is equal to either -2, -1 or 0,  we have 
			%
			%
			\begin{equation}
				b_{p}^n(w_h,w_h) \geq \underline{C}_{\lambda_t} \kappa_* \norm[1]{\nabla w_h}^2 + \alpha_{p_\ell,*} \frac{\underline{C}_{\lambda_t}^2 \kappa_*^2}{\overline{C}_{\lambda_t} \kappa^*} \abs[1]{w_h}_{\text{J}}^2
				-2 \frac{1 - \theta_{p_\ell}}{2} \overline{C}_{\lambda_t} \kappa^* C_{\text{tr}} 	\norm[1]{\nabla w_h} \abs[1]{w_h}_{\text{J}}.
				\label{eq:ah_w_thetapl2}
			\end{equation}	
			Next, we use the following inequality: let $\beta$ be a nonnegative real number and assume that $c > \beta^2$, then, for all $x,y \in \mathbb{R}$,
			\begin{equation}
				x^2 - 2\beta x y + c y^2 \geq \frac{c - \beta^2}{1 + c}\del[1]{x^2 + y^2}.
				\label{eq:important_ineq}
			\end{equation}
			Using this in \cref{eq:ah_w_thetapl2} with $c = \alpha_{p_\ell,*} \underline{C}_{\lambda_t} \kappa_*\del[1]{\overline{C}_{\lambda_t} \kappa^*}^{-1}$, $\beta = 0.5\del{1 - \theta_{p_\ell}}C_{\text{tr}}\overline{C}_{\lambda_t} \kappa^* \del[1]{\underline{C}_{\lambda_t} \kappa_*}^{-1}$, $x = \del[1]{\underline{C}_{\lambda_t} \kappa_*}^{1/2}\norm[1]{\nabla w_h}$ and $y =  \del[1]{\underline{C}_{\lambda_t} \kappa_*}^{1/2}\abs[1]{w_h}_{\text{J}}$, concludes the proof.
		\end{proof}
		
		Now, we prove that $b_{p}$ is bounded.
		
		\begin{lemma}[Boundedness of $b_{p}$]
			There exists a constant $C_{\text{B},p_\ell} > 0$ independent of $h$ such that, for all $v_h \in X_{h,\Gamma_\D^{p_\ell}}$ and $w_h \in X_{h,\Gamma_\D^{p_\ell}}$, the following relation is satisfied
			\begin{equation}
				b_{p}^n(v_h,w_h) \leq C_{\text{B},p_\ell}\vertiii{v_h}\,\, \vertiii{w_h}.
				\label{eq:boundedness_pressure}
			\end{equation}
			In addition, there exists a constant $C_{\text{B}_*,p_\ell} > 0$ independent of $h$ and $\tau$ such that for any  $v \in H^2(\Omega)+X_{h,\Gamma_\D^{p_\ell}}$ and any
			$w_h\in X_{h,\Gamma_\D^{p_\ell}}$, the following bound holds
			\begin{equation}
				\abs[1]{b_{p}^n(v,w_h)} \leq C_{\text{B}_*,p_\ell} \vertiii{v}_{\ast}\vertiii{w_h}.
				\label{eq:boundedness_continuous_pressure}
			\end{equation}
			\label{lem:boundedness_pressure}
		\end{lemma}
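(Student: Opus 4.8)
The plan is to estimate separately the four pieces of $b_{p}^n$ in \cref{eq:ah_pressure}, using the uniform bounds of \ref{hyp_bounds} on $\lambda_t^n$ and $\kappa$ (which also control $\eta_{p_\ell,e}^n$, since the harmonic average \cref{eq:harmonic} satisfies $\mathcal{H}(x_1,x_2)\le\max(x_1,x_2)$, so that $\eta_{p_\ell,e}^n\le\overline{C}_{\lambda_t}\kappa^*$), together with Cauchy--Schwarz, the trace inequality \cref{eq:trace_ineq}, and the already-established bound \cref{eq:bound_consistency_pressure}.

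For \cref{eq:boundedness_pressure}: the volume term is controlled by $\overline{C}_{\lambda_t}\kappa^*\norm{\nabla_h v_h}\,\norm{\nabla_h w_h}$; the penalty term, after pulling out $h_e^{-1/2}$ on each face and applying Cauchy--Schwarz over $\Gamma_h$, is bounded by $\alpha_{p_\ell}^*\overline{C}_{\lambda_t}\kappa^*\abs{v_h}_{\text{J}}\,\abs{w_h}_{\text{J}}$; and the two face-average terms are handled by \cref{eq:bound_consistency_pressure} followed by the trace inequality applied to the discrete function whose gradient appears in the average, giving the bound $\overline{C}_{\lambda_t}\kappa^*C_{\text{tr}}\del{\norm{\nabla_h v_h}\abs{w_h}_{\text{J}}+\abs{\theta_{p_\ell}}\,\norm{\nabla_h w_h}\abs{v_h}_{\text{J}}}$. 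Since $\abs{\theta_{p_\ell}}\le 1$ and both $\norm{\nabla_h\cdot}$ and $\abs{\cdot}_{\text{J}}$ are dominated by $\vertiii{\cdot}$, summing the four contributions produces \cref{eq:boundedness_pressure} with $C_{\text{B},p_\ell}$ depending only on $\overline{C}_{\lambda_t}$, $\kappa^*$, $\alpha_{p_\ell}^*$ and $C_{\text{tr}}$.

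For \cref{eq:boundedness_continuous_pressure}, write $v=v^c+v_h$ with $v^c\in H^2(\Omega)$ and $v_h\in X_{h,\Gamma_\D^{p_\ell}}$; since $v^c$ is continuous, $\sbr{v}=\sbr{v_h}$ on every interior face, hence $\abs{v}_{\text{J}}$ is finite and the penalty and volume terms are bounded exactly as in the previous step, now by $C\vertiii{v}_{*}\vertiii{w_h}$ (because $\norm{\nabla_h v}$ and $\abs{v}_{\text{J}}$ are both $\le\vertiii{v}\le\vertiii{v}_{*}$). In the symmetrization term the differentiated factor is still the discrete function $w_h$, so \cref{eq:bound_consistency_pressure} and the trace inequality bound it by $\abs{\theta_{p_\ell}}\overline{C}_{\lambda_t}\kappa^*C_{\text{tr}}\norm{\nabla_h w_h}\abs{v}_{\text{J}}\le C\vertiii{w_h}\vertiii{v}_{*}$. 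The only genuinely different term is $\sum_{e\in\Gamma_h}\int_e\cbr{\lambda_t^n\kappa\nabla v\cdot\bfn_e}\sbr{w_h}$, where the non-polynomial gradient $\nabla v$ is paired with a jump: here I invoke \cref{eq:bound_consistency_pressure} directly (its proof uses only \ref{hyp_bounds} and Cauchy--Schwarz, so it is valid for any $v\in X(h)$), obtaining a bound by $\overline{C}_{\lambda_t}\kappa^*\del[2]{\sum_{K\in\mathcal{E}_h}\sum_{e\in\partial K}h_e\norm{\nabla v|_K\cdot\bfn_e}_{L^2(e)}^2}^{1/2}\abs{w_h}_{\text{J}}$; then, using $h_e\le h_K$ and the fact that on $e\subset\partial K$ the face normal $\bfn_e$ equals $\pm\bfn_K$, the inner sum is $\le h_K\norm{\nabla_h v|_K\cdot\bfn_K}_{L^2(\partial K)}^2$, so the whole factor is $\le\vertiii{v}_{*}$ by \cref{eq:stab_norm}, and this term is $\le\overline{C}_{\lambda_t}\kappa^*\vertiii{v}_{*}\vertiii{w_h}$. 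Collecting all four estimates gives \cref{eq:boundedness_continuous_pressure}.

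The step that needs the most care, and the reason the norm $\vertiii{\cdot}_{*}$ of \cref{eq:stab_norm} is introduced in the first place, is precisely the term $\sum_{e\in\Gamma_h}\int_e\cbr{\lambda_t^n\kappa\nabla v\cdot\bfn_e}\sbr{w_h}$ in \cref{eq:boundedness_continuous_pressure}: since $v$ is not a polynomial one cannot absorb $h_e^{1/2}\norm{\nabla v|_K\cdot\bfn_e}_{L^2(e)}$ into $\norm{\nabla_h v}$ through an inverse/trace inequality, so this quantity must be carried along as the extra boundary term built into $\vertiii{v}_{*}$. Everything else is routine DG bookkeeping, provided one is consistent about which of the two arguments carries the differentiated gradient in each face integral before applying \cref{eq:trace_ineq}.
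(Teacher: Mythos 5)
Your proof is correct and follows essentially the same route as the paper: the same four-term splitting of $b_p^n$, Cauchy--Schwarz plus the bounds in \ref{hyp_bounds} for the volume and penalty terms, the bound \cref{eq:bound_consistency_pressure} combined with the trace inequality \cref{eq:trace_ineq} for the face-average terms, and, for \cref{eq:boundedness_continuous_pressure}, modifying only the term carrying $\nabla v$ so that it is absorbed into the $\vertiii{\cdot}_{*}$ norm of \cref{eq:stab_norm}. The only (harmless) differences are cosmetic: a slightly sharper constant for the penalty term via $\mathcal{H}(x_1,x_2)\le\max(x_1,x_2)$, and the explicit splitting $v=v^c+v_h$, which the paper does not need.
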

		
		\begin{proof}
			Let $v_h \in X_{h,\Gamma_\D^{p_\ell}}$ and $w_h \in X_{h,\Gamma_\D^{p_\ell}}$. We have
			\begin{equation}
				\begin{split}
					\abs[1]{b_{p}^n(v_h, w_h)} &\leq \abs{\sum_{K \in \mathcal{E}_h}\int_K \lambda_t^n\kappa \nabla v_h \cdot \nabla w_h}
					+ \abs{\sum_{e \in \Gamma_h }\alpha_{p_\ell,e}h_e^{-1}\int_e \eta_{p_\ell,e}^n \sbr[1]{v_h}\sbr[1]{w_h}}
					\\
					&+ \abs{\sum_{e \in \Gamma_h }\int_e \cbr[1]{\lambda_t^n \kappa\nabla v_h \cdot \boldsymbol{n}_e}\sbr[1]{w_h}}
					+ \abs{\sum_{e \in \Gamma_h }\theta_{p_\ell}\int_e \cbr[1]{\lambda_t^n \kappa\nabla w_h \cdot\boldsymbol{n}_e}\sbr[1]{v_h}}
					\\ &= T_1 + T_2 + T_3 + T_4.
				\end{split}
			\end{equation}
			Using  Cauchy--Schwarz's inequality, we see that
			\begin{equation}
				T_1 \leq \overline{C}_{\lambda_t} \kappa^* \norm[1]{\nabla_h v_h} \norm[1]{\nabla_h w_h} \leq \overline{C}_{\lambda_t} \kappa^* \vertiii{v_h} \,\, \vertiii{w_h}.
			\end{equation}
			Similarly,
			\begin{equation}
				T_2 \leq \alpha_{p_\ell}^* \frac{\del[1]{\overline{C}_{\lambda_t} \kappa^*}^2}{\underline{C}_{\lambda_t} \kappa_*} \abs[1]{v_h}_{\text{J}} \abs[1]{w_h}_{\text{J}} \leq \alpha_{p_\ell}^* \frac{\del[1]{\overline{C}_{\lambda_t} \kappa^*}^2}{\underline{C}_{\lambda_t} \kappa_*} \vertiii{v_h}\,\, \vertiii{w_h}.
			\end{equation}
			Using \cref{eq:bound_consistency_pressure}, recalling that $h_e \leq h_K$, and using the trace inequality \cref{eq:trace_ineq}, we bound $T_3$ as
			\begin{equation}
				T_3 \leq \overline{C}_{\lambda_t} \kappa^* \vertiii{v_h}\,\, \vertiii{w_h}.
			\end{equation}
			Finally, $T_4$ can be bounded in a similar way as $T_3$ to obtain
			\begin{equation}
				T_4 \leq \abs{\theta_{p_\ell}}\overline{C}_{\lambda_t} \kappa^* \vertiii{v_h}\, \vertiii{w_h}.
			\end{equation}
			Taking $C_{\text{B},p_\ell} = 4\overline{C}_{\lambda_t} \sqrt{\kappa^*} \max\left(\alpha_{p_\ell}^* \frac{\overline{C}_{\lambda_t} \kappa^*}{\underline{C}_{\lambda_t} \kappa_*}, \abs[1]{\theta_{p_\ell}}\right)$ gives the result.
			The proof of \eqref{eq:boundedness_continuous_pressure} is similar; one needs to change the bound for the term $T_3$.
		\end{proof}

		\begin{corollary}
			There exists a unique solution to problem \cref{eq:disc_pl}.
		\end{corollary}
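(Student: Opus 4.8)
The plan is to reduce problem \eqref{eq:disc_pl} to a square linear system on the finite-dimensional space $X_{h,\Gamma_\D^{p_\ell}}$ and then invoke coercivity. First I would remove the inhomogeneous Dirichlet constraint by lifting: since $p_\ell^{\mathrm{bdy}}$ is the trace of a function in $X_h$, fix any $\tilde P_h\in X_h$ with $\tilde P_h = p_\ell^{\mathrm{bdy}}$ on $\Gamma_\D^{p_\ell}$, and seek $P_h^{n+1}=\tilde P_h+z_h$ with $z_h\in X_{h,\Gamma_\D^{p_\ell}}$. By linearity of $b_p^n$ in its first argument, \eqref{eq:disc_pl} is equivalent to finding $z_h\in X_{h,\Gamma_\D^{p_\ell}}$ such that
\[
b_p^n(z_h,w_h) = f_p^n(w_h) - b_p^n(\tilde P_h,w_h)\qquad \forall\, w_h\in X_{h,\Gamma_\D^{p_\ell}},
\]
where the right-hand side is a linear functional on $X_{h,\Gamma_\D^{p_\ell}}$.

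Next I would observe that $\vertiii{\cdot}$ is a genuine norm on $X_{h,\Gamma_\D^{p_\ell}}$ (not merely a seminorm): if $\vertiii{w_h}=0$ then $\nabla_h w_h\equiv 0$ on every element and every interior jump vanishes, so $w_h$ is a global constant; since $\abs{\Gamma_\D^{p_\ell}}>0$ and $w_h=0$ there, that constant is zero. Hence the linear endomorphism $L$ of $X_{h,\Gamma_\D^{p_\ell}}$ representing $z_h\mapsto b_p^n(z_h,\cdot)$ (via the Riesz identification of this finite-dimensional space with its dual) is well defined, and by the coercivity estimate \eqref{eq:coercivity_pressure} of Lemma \ref{lem:coercivity_pressure} it is injective: $b_p^n(z_h,w_h)=0$ for all $w_h$ forces $b_p^n(z_h,z_h)=0$, whence $\vertiii{z_h}^2\le C_{\alpha,p_\ell}^{-1}\cdot 0=0$ and $z_h=0$. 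A linear injective endomorphism of a finite-dimensional space is bijective, so a unique $z_h$ exists, and therefore a unique $P_h^{n+1}$. Equivalently, one may cite the Lax--Milgram theorem, whose hypotheses are exactly the coercivity \eqref{eq:coercivity_pressure} and boundedness \eqref{eq:boundedness_pressure} established in Lemmas \ref{lem:coercivity_pressure} and \ref{lem:boundedness_pressure}; these in turn require the lower bound on $\alpha_{p_\ell,*}$ assumed throughout.

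There is no real obstacle here; the statement is a direct consequence of the two preceding lemmas together with the finite dimensionality of $X_h$. The only points requiring a moment's care are that the bilinear form is tested against the \emph{homogeneous} space $X_{h,\Gamma_\D^{p_\ell}}$ while the unknown lives in the affine subset of $X_h$ with prescribed trace on $\Gamma_\D^{p_\ell}$ — which is precisely why the lifting step is needed — and that $\vertiii{\cdot}$ separates points on $X_{h,\Gamma_\D^{p_\ell}}$, which uses $\abs{\Gamma_\D^{p_\ell}}>0$.
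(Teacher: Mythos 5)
Your argument is correct and follows essentially the same route as the paper, which also deduces well-posedness of \eqref{eq:disc_pl} from the coercivity and boundedness of $b_p^n$ via Lax--Milgram (equivalently, the finite-dimensional injectivity-implies-bijectivity argument you give). The extra care you take with the Dirichlet lifting and with checking that $\vertiii{\cdot}$ is a norm on $X_{h,\Gamma_\D^{p_\ell}}$ (using $\abs{\Gamma_\D^{p_\ell}}>0$) is a welcome elaboration of steps the paper leaves implicit.
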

		
		\begin{proof}
			The coercivity \cref{lem:coercivity_pressure} and boundedness \cref{lem:boundedness_pressure} of $b_{p}$, together with the fact that $\psi_\ell$ is strictly positive, imply, using the Lax--Milgram theorem, that problem \cref{eq:disc_pl} is well-posed.
		\end{proof}
		
		\subsection{Aqueous saturation}
		
		
		\begin{lemma}[Consistency of $b_{a}$]
			\begin{equation}
				(\phi\del[0]{\partial_ts_a}^{n+1},w_h) + \tilde{b}_{a}^{n+1}(s_a^{n+1}, w_h) = \tilde{f}^{n+1}_{a}(w_h), \quad \forall w_h \in X_{h,\Gamma_\D^{s_a}}, \quad \forall n\geq 0,
				\label{eq:consistency_saturation}
			\end{equation}
			where $\tilde{b}^{n+1}_{a}(s_a,w_h) = b_{a}(s_a,w_h;p_\ell^{n+1}, s_a^{n+1}, s_v^{n+1})$, and $\tilde{f}^{n+1}_{a}(w_h) = f_{a}(w_h; p_\ell^{n+1}, s_a^{n+1}, s_v^{n+1})$.
		\end{lemma}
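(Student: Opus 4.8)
This is, for the aqueous saturation, the exact analogue of the pressure--consistency statement \cref{lem:consistency_pressure}, so the plan is simply to repeat that argument in the present setting. I would plug the exact solution $s_a^{n+1}$ into $\tilde b_a^{n+1}$ and into $\tilde f_a^{n+1}$, integrate by parts element by element, use the regularity of the exact solution together with the continuity of the physical normal fluxes across interior faces to collapse all element--boundary sums down to integrals over $\Gamma_\N^{s_a}$, and finally recognize the continuous equation \eqref{eq:aqueous_saturation_eq} (rewritten with $(\partial_{s_a}p_{c,a})^{+}=-\partial_{s_a}p_{c,a}$) and the Neumann condition \eqref{eq:Neumann_BC_aqueous_saturation}. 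As in the proof of \cref{lem:consistency_pressure} I would suppress the superscript $n+1$, keeping in mind that every coefficient is frozen at $(s_a^{n+1},s_v^{n+1})$ and at $p_\ell^{n+1}$, so that for instance $\tilde\lambda_a$ denotes $\lambda_a(s_a^{n+1},s_v^{n+1})$.

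On the bilinear side, since $s_a^{n+1}$ is continuous we have $\sbr[0]{s_a^{n+1}}=0$ on every interior face, so the interior--penalty term and the $\theta_{s_a}$--symmetrization term in \eqref{eq:ah_aqueous_saturation} drop out, leaving only the volume term and the weighted--average term. Integration by parts on each $K$ turns the volume term into $-\sum_{K}\int_K\nabla\cdot(\kappa\tilde\lambda_a(\partial_{s_a}p_{c,a})^{+}\nabla s_a^{n+1})\,w_h$ plus element--boundary integrals; since $\kappa\tilde\lambda_a(\partial_{s_a}p_{c,a})^{+}\nabla s_a^{n+1}\cdot\boldsymbol{n}_e$ is single--valued on interior faces it agrees with $\cbr[0]{\kappa\tilde\lambda_a(\partial_{s_a}p_{c,a})^{+}\nabla s_a^{n+1}\cdot\boldsymbol{n}_e}$ (the two weights sum to one), so the interior--face contributions cancel, while the $\Gamma_\D^{s_a}$ pieces vanish because $w_h=0$ there. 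This leaves $\tilde b_a^{n+1}(s_a^{n+1},w_h)$ equal to $-\sum_K\int_K\nabla\cdot(\kappa\tilde\lambda_a(\partial_{s_a}p_{c,a})^{+}\nabla s_a^{n+1})\,w_h$ plus the $\Gamma_\N^{s_a}$ integral of that same flux.

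On the right--hand side, $(q_a^{n+1},w_h)$ is kept; the gravity volume term is integrated by parts and recombined with its weighted--average face term in the same fashion, producing $-\sum_K\int_K\nabla\cdot(\kappa\rho_a\tilde\lambda_a\boldsymbol{g})\,w_h$ plus a $\Gamma_\N^{s_a}$ integral. The term built from $\boldsymbol{u}_h^{n+1}=\Pi_{\mathrm{RT}}(-\kappa\nabla p_\ell^{n+1})$ is handled using that $\boldsymbol{u}_h^{n+1}\in H(\mathrm{div},\Omega)$, that continuity of $\tilde\lambda_a$ gives $(\tilde\lambda_a)^{\uparrow}_{s_a}=\tilde\lambda_a|_e$ and makes $\tilde\lambda_a\boldsymbol{u}_h^{n+1}\cdot\boldsymbol{n}_e$ single--valued, and relations \eqref{eq:RTproj1}--\eqref{eq:RTproj2}, which, because $\sbr[0]{p_\ell^{n+1}}=0$, make $\boldsymbol{u}_h^{n+1}\cdot\boldsymbol{n}_e$ reproduce the normal Darcy flux on every face; integrating $\tilde\lambda_a\boldsymbol{u}_h^{n+1}$ by parts element--wise and telescoping then gives $-\sum_K\int_K\nabla\cdot(\kappa\tilde\lambda_a\nabla p_\ell^{n+1})\,w_h$ plus a $\Gamma_\N^{s_a}$ integral. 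Adding the three pieces, the volume integrands recombine into $-\nabla\cdot(\kappa\tilde\lambda_a(\partial_{s_a}p_{c,a})^{+}\nabla s_a^{n+1})-\nabla\cdot(\kappa\tilde\lambda_a\nabla p_\ell^{n+1})-\nabla\cdot(\kappa\rho_a\tilde\lambda_a\boldsymbol{g})$ tested against $w_h$, which by \eqref{eq:aqueous_saturation_eq} equals $(q_a^{n+1}-\phi\,\partial_t s_a^{n+1},w_h)$, while the collected boundary integrals equal $\sum_{e\in\Gamma_\N^{s_a}}\int_e j_{s_a}^{\N}w_h$ by \eqref{eq:Neumann_BC_aqueous_saturation}. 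Moving $(\phi\,\partial_t s_a^{n+1},w_h)$ to the left yields \eqref{eq:consistency_saturation}.

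I expect the Raviart--Thomas term to be the only genuine difficulty: one must check carefully that relations \eqref{eq:RTproj1}--\eqref{eq:RTproj2}, specialized to $\sbr[0]{p_\ell^{n+1}}=0$, really make $\boldsymbol{u}_h^{n+1}$ deliver, on every face, exactly the normal flux that comes out of integrating the continuous equation by parts --- this is the precise sense in which $\Pi_{\mathrm{RT}}$ was built to be consistent with the pressure form $b_p$ --- and that the weighted and upwind averages collapse as claimed because $\kappa$ is piecewise constant (Hypothesis \ref{hyp_kappa}) and the exact coefficients are continuous. Everything else --- discarding the penalty and symmetrization terms and telescoping the element--boundary sums --- is the same bookkeeping already performed for $b_p$ in \cref{lem:consistency_pressure}, now carried out with the weighted average $\cbr[0]{\cdot}$ and the upwind operator $(\cdot)^{\uparrow}_{s_a}$ in place of the plain average.
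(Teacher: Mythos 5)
Your overall route (drop the penalty and $\theta_{s_a}$ terms since $\sbr[0]{s_a^{n+1}}=0$, integrate by parts element by element, invoke continuity of the exact normal fluxes, the strong equation \eqref{eq:aqueous_saturation_eq} and the Neumann condition \eqref{eq:Neumann_BC_aqueous_saturation}) is exactly the ``analogous to \cref{lem:consistency_pressure}'' argument the paper has in mind, and those parts are fine. The genuine problem is the step you yourself flag as the only difficulty: the velocity term. You read $\tilde f_a^{n+1}$ as still containing $\Pi_{\mathrm{RT}}(-\kappa\nabla p_\ell^{n+1})$ and claim that \eqref{eq:RTproj1}--\eqref{eq:RTproj2}, specialized to $\sbr[0]{p_\ell^{n+1}}=0$, make $\boldsymbol{u}_h^{n+1}\cdot\boldsymbol{n}_e$ ``reproduce the normal Darcy flux on every face.'' That is not true in the sense you need: the Raviart--Thomas projection only matches the \emph{zeroth-order moments} of the normal flux (the test functions in \eqref{eq:RTproj1}--\eqref{eq:RTproj2} are constants on each face), so $\boldsymbol{u}_h^{n+1}\cdot\boldsymbol{n}_e$ agrees with $-\{\kappa\nabla p_\ell^{n+1}\cdot\boldsymbol{n}_e\}$ only in face average, not pointwise, and inside each element $\boldsymbol{u}_h^{n+1}\neq-\kappa\nabla p_\ell^{n+1}$. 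Consequently the volume term $\int_K\tilde\lambda_a\boldsymbol{u}_h^{n+1}\cdot\nabla w_h$ does not collapse to $-\int_K\nabla\cdot(\kappa\tilde\lambda_a\nabla p_\ell^{n+1})\,w_h$ plus the correct boundary flux, and the face integrals against $\tilde\lambda_a\sbr[0]{w_h}$ (neither of which is constant on $e$) do not cancel exactly. With your interpretation the identity \eqref{eq:consistency_saturation} would hold only up to an $O(h)$ projection error, not as the exact equality the lemma asserts.

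The fix is one of interpretation, and it is the one the paper itself uses later: in $\tilde f_a^{n+1}(w_h)=f_a(w_h;p_\ell^{n+1},s_a^{n+1},s_v^{n+1})$ the discrete velocity is replaced by the exact Darcy velocity $\boldsymbol{u}^{n+1}=-\kappa\nabla p_\ell^{n+1}$ and the upwinded mobility by the exact mobility $\tilde\lambda_a^{n+1}$ (a continuous function has a single trace, so upwinding is vacuous). This is made explicit in the proof of \eqref{eq:fa_fa_tilde}, where the difference $f_a^n-\tilde f_a^n$ is estimated precisely by comparing $\boldsymbol{u}_h^{n+1}=\Pi_{\mathrm{RT}}(-\kappa\nabla P_h^{n+1})$ with $\boldsymbol{u}^{n+1}=-\kappa\nabla p_\ell^{n+1}$; if $\tilde f_a$ carried the projected exact velocity, that lemma would be comparing the wrong objects. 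With this reading, the velocity term is an exact, single-valued physical flux, your element-by-element integration by parts telescopes exactly as in \cref{lem:consistency_pressure}, and the sum of the three volume contributions reproduces \eqref{eq:aqueous_saturation_eq} while the boundary contributions reproduce \eqref{eq:Neumann_BC_aqueous_saturation}; no property of $\Pi_{\mathrm{RT}}$ is needed in the consistency proof at all, and its approximation quality \eqref{eq:RT_estimate} enters only in the error analysis.
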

		
		\begin{proof}
			The proof of this lemma is skipped because it is  analogous to the proof of \cref{lem:consistency_pressure}. 
		\end{proof}
		
		\begin{lemma}
			For all $\del[1]{v_{h}, w_h} \in X_h \times X_h$, the following relation is satisfied
			\begin{multline}
				\abs{\sum_{e \in \Gamma_h } \int_e \cbr{\kappa \lambda_a^n \del[1]{\partial_{s_a}p_{c,a}}^{+,n} \nabla v_{h} \cdot \boldsymbol{n}_e}\sbr[1]{w_h}} 
				\\
				\leq \overline{C}_{\lambda_a}\kappa^* \overline{C}_{p_{c,a}} \del{\sum_{K \in \mathcal{E}_h} \sum_{e \in \partial K} h_e \norm[1]{\nabla v_h|_K \cdot \boldsymbol{n}_e}_{L^2(e)}^2 }^{1/2}\abs[1]{w_h}_{\text{J}}.
				\label{eq:bound_consistency_aqueous_saturation}
			\end{multline}
		\end{lemma}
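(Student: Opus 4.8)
The plan is to mirror, essentially line for line, the proof of the analogous bound \eqref{eq:bound_consistency_pressure}, the only change being that the coefficient $\lambda_t^n\kappa$ is replaced throughout by $c^n := \kappa\lambda_a^n\del[1]{\partial_{s_a}p_{c,a}}^{+,n}$. First I would fix an interior face $e=\partial K_1\cap\partial K_2\in\Gamma_h$ and expand the weighted average according to its definition,
\[
\cbr{c^n\nabla v_h\cdot\boldsymbol{n}_e}=\omega_1\, c^n|_{K_1}\,\nabla v_h|_{K_1}\cdot\boldsymbol{n}_e+\omega_2\, c^n|_{K_2}\,\nabla v_h|_{K_2}\cdot\boldsymbol{n}_e,
\]
where $\omega_1+\omega_2=1$ and $0\le\omega_i\le 1$. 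Invoking the uniform upper bounds $\lambda_a^n\le\overline{C}_{\lambda_a}$, $\kappa\le\kappa^*$ and $\del[1]{\partial_{s_a}p_{c,a}}^{+,n}\le\overline{C}_{p_{c,a}}$ from \ref{hyp_bounds} (valid for the functions evaluated at the discrete solutions by the remark following \ref{hyp_bounds}), together with $\omega_i\le 1$, I would obtain the pointwise estimate
\[
\abs[1]{\cbr{c^n\nabla v_h\cdot\boldsymbol{n}_e}}\le\overline{C}_{\lambda_a}\kappa^*\overline{C}_{p_{c,a}}\del{\abs[1]{\nabla v_h|_{K_1}\cdot\boldsymbol{n}_e}+\abs[1]{\nabla v_h|_{K_2}\cdot\boldsymbol{n}_e}}.
\]

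Next I would insert the factor $h_e^{1/2}h_e^{-1/2}$ and apply the Cauchy--Schwarz inequality on the face $e$ to get
\[
\int_e\cbr{c^n\nabla v_h\cdot\boldsymbol{n}_e}\sbr{w_h}\le\overline{C}_{\lambda_a}\kappa^*\overline{C}_{p_{c,a}}\,h_e^{1/2}\del{\norm[1]{\nabla v_h|_{K_1}\cdot\boldsymbol{n}_e}_{L^2(e)}+\norm[1]{\nabla v_h|_{K_2}\cdot\boldsymbol{n}_e}_{L^2(e)}}h_e^{-1/2}\norm[1]{\sbr{w_h}}_{L^2(e)}.
\]
Then I would sum this over all $e\in\Gamma_h$, apply the discrete Cauchy--Schwarz inequality to separate off the factor $\del{\sum_{e\in\Gamma_h}h_e^{-1}\norm[1]{\sbr{w_h}}_{L^2(e)}^2}^{1/2}=\abs[1]{w_h}_{\text{J}}$, and finally re-index the remaining sum over interior faces as a sum over the elements and their faces, each interior face being accounted once per adjacent element, to recover $\del{\sum_{K\in\mathcal{E}_h}\sum_{e\in\partial K}h_e\norm[1]{\nabla v_h|_K\cdot\boldsymbol{n}_e}_{L^2(e)}^2}^{1/2}$; this last manipulation is exactly the one carried out in the proof of \eqref{eq:bound_consistency_pressure}.

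There is no genuine obstacle here: the estimate is purely a matter of replacing one bounded coefficient by another and repeating a routine computation. The only point requiring a little care is the bookkeeping in passing from a sum over interior faces to a sum over each element's faces, and making sure the coefficient bounds are applied to the functions evaluated at the discrete solutions (as licensed by the remark under \ref{hyp_bounds}); both are handled in precisely the same way as in the pressure case.
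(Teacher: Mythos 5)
Your proposal is correct and follows exactly the route the paper takes: the paper's proof of this lemma simply states that it is analogous to the proof of \eqref{eq:bound_consistency_pressure}, which is the face-by-face bound of the weighted average via \ref{hyp_bounds} and Cauchy--Schwarz, followed by summation over faces and re-indexing over elements, just as you describe with $\lambda_t^n\kappa$ replaced by $\kappa\lambda_a^n\del[1]{\partial_{s_a}p_{c,a}}^{+,n}$. No gaps; your remark about the weights satisfying $\omega_i\le 1$ and about applying the bounds to coefficients evaluated at discrete solutions is exactly the care needed.
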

		
		\begin{proof}
			The proof of this lemma is analogous to the proof of \cref{eq:bound_consistency_pressure}.
		\end{proof}
		
		Now we can show that $b_{a}$ is coercive on $X_{h,\Gamma_\D^{s_a}}$.
		
		\begin{lemma}[Coercivity of $b_{a}$]
			Assume that $\alpha_{s_a,*}$ satisfies that
			\begin{equation}
				\alpha_{s_a,*} > 0.25\del{1 - \theta_{s_a}}^2 \del[1]{\overline{C}_{\lambda_a} \overline{C}_{p_{c,a}} \kappa^*}^3 \del[1]{\underline{C}_{\lambda_a} \underline{C}_{p_{c,a}} \kappa_*}^{-3} C_{\text{tr}}^2,
			\end{equation}
			where $C_{\text{tr}}$ results from the trace inequality \cref{eq:trace_ineq}. Then, the bilinear form $b_{a}^{n}$ defined by \cref{eq:ah_aqueous_saturation} is coercive on $X_{h,\Gamma_\D^{s_a}}$ with respect to the norm $\vertiii[1]{\cdot}$ defined by \cref{eq:dif_norm}, i.e., for all $w_h \in X_{h,\Gamma_\D^{s_a}}$, the following relation is satisfied:
			\begin{equation}
				b_{a}^{n}(w_h,w_h) \geq C_{\alpha,s_a}\vertiii[1]{w_h}^2,
				\label{eq:coercivity_aqueous_saturation}
			\end{equation}
			with
			\begin{equation}
				C_{\alpha,s_a} = \frac{\alpha_{s_a,*}\underline{C}_{p_{c,a}}\underline{C}_{\lambda_a} \kappa_*\del[1]{\overline{C}_{p_{c,a}} \overline{C}_{\lambda_a} \kappa^*}^{-1} - 0.25\del{1 - \theta_{s_a}}^2 		\del[1]{\overline{C}_{p_{c,a}} \overline{C}_{\lambda_a} \kappa^*}^2\del[1] {\underline{C}_{p_{c,a}}\underline{C}_{\lambda_a} \kappa_*}^{-2} C_{\text{tr}}^2}{1 + \alpha_{s_a,*}\underline{C}_{p_{c,a}} \underline{C}_{\lambda_a} \kappa_*\del[1]{\overline{C}_{p_{c,a}} \overline{C}_{\lambda_a} \kappa^*}^{-1}}.
				%
			\end{equation}
			%
			\label{lem:coercivity_aqueous_saturation}
		\end{lemma}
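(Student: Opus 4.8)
The plan is to follow the proof of \cref{lem:coercivity_pressure} essentially verbatim, noting that $b_a^n$ has the same algebraic structure as $b_p^n$ with the diffusion coefficient $\lambda_t^n\kappa$ replaced by $\kappa\lambda_a^n\del[1]{\partial_{s_a}p_{c,a}}^{+,n}$ and the penalty weight $\eta_{p_\ell,e}^n$ replaced by $\eta_{s_a,e}^n$. First I would put $v_h=w_h$ in \eqref{eq:ah_aqueous_saturation}: the volume integral $\sum_{K}\int_K\kappa\lambda_a^n\del[1]{\partial_{s_a}p_{c,a}}^{+,n}\abs[1]{\nabla w_h}^2$ and the penalty sum $\sum_{e}\alpha_{s_a,e}h_e^{-1}\int_e\eta_{s_a,e}^n\sbr[1]{w_h}^2$ appear with a $+$ sign, while the consistency and symmetrization face integrals combine into $(\theta_{s_a}-1)\sum_{e\in\Gamma_h}\int_e\cbr[1]{\kappa\lambda_a^n\del[1]{\partial_{s_a}p_{c,a}}^{+,n}\nabla w_h\cdot\boldsymbol{n}_e}\sbr[1]{w_h}$.

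Next I would bound the first two pieces from below using \ref{hyp_bounds}: the volume integrand is at least $\underline{C}_{p_{c,a}}\underline{C}_{\lambda_a}\kappa_*\abs[1]{\nabla w_h}^2$, and, since $\eta_{s_a,e}^n$ is the harmonic mean \eqref{eq:harmonic} of two quantities lying in $[\underline{C}_{p_{c,a}}\underline{C}_{\lambda_a}\kappa_*,\overline{C}_{p_{c,a}}\overline{C}_{\lambda_a}\kappa^*]$, monotonicity of $\mathcal{H}$ gives $\eta_{s_a,e}^n\ge(\underline{C}_{p_{c,a}}\underline{C}_{\lambda_a}\kappa_*)^2(\overline{C}_{p_{c,a}}\overline{C}_{\lambda_a}\kappa^*)^{-1}$, so the penalty sum is at least $\alpha_{s_a,*}(\underline{C}_{p_{c,a}}\underline{C}_{\lambda_a}\kappa_*)^2(\overline{C}_{p_{c,a}}\overline{C}_{\lambda_a}\kappa^*)^{-1}\abs[1]{w_h}_{\text{J}}^2$. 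For the remaining face term I would use the already-established bound \eqref{eq:bound_consistency_aqueous_saturation}, then the trace inequality \eqref{eq:trace_ineq} together with $h_e\le h_K$, to obtain
\[
\abs{\sum_{e\in\Gamma_h}\int_e\cbr[1]{\kappa\lambda_a^n\del[1]{\partial_{s_a}p_{c,a}}^{+,n}\nabla w_h\cdot\boldsymbol{n}_e}\sbr[1]{w_h}}\le\overline{C}_{\lambda_a}\overline{C}_{p_{c,a}}\kappa^*C_{\text{tr}}\norm[1]{\nabla_h w_h}\abs[1]{w_h}_{\text{J}},
\]
and, since $\theta_{s_a}-1\le0$, this contributes at worst $-(1-\theta_{s_a})\overline{C}_{\lambda_a}\overline{C}_{p_{c,a}}\kappa^*C_{\text{tr}}\norm[1]{\nabla_h w_h}\abs[1]{w_h}_{\text{J}}$. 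Collecting the three estimates yields $b_a^n(w_h,w_h)\ge a_1\norm[1]{\nabla_h w_h}^2+a_2\abs[1]{w_h}_{\text{J}}^2-a_3\norm[1]{\nabla_h w_h}\abs[1]{w_h}_{\text{J}}$ with $a_1,a_2,a_3$ the explicit constants just described.

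Finally I would invoke the elementary inequality \eqref{eq:important_ineq} with $x=(\underline{C}_{p_{c,a}}\underline{C}_{\lambda_a}\kappa_*)^{1/2}\norm[1]{\nabla_h w_h}$, $y=(\underline{C}_{p_{c,a}}\underline{C}_{\lambda_a}\kappa_*)^{1/2}\abs[1]{w_h}_{\text{J}}$, $c=\alpha_{s_a,*}\underline{C}_{p_{c,a}}\underline{C}_{\lambda_a}\kappa_*(\overline{C}_{p_{c,a}}\overline{C}_{\lambda_a}\kappa^*)^{-1}$ and $\beta=\frac12(1-\theta_{s_a})C_{\text{tr}}\overline{C}_{p_{c,a}}\overline{C}_{\lambda_a}\kappa^*(\underline{C}_{p_{c,a}}\underline{C}_{\lambda_a}\kappa_*)^{-1}$; the assumption on $\alpha_{s_a,*}$ is precisely the condition $c>\beta^2$, the resulting coefficient $\frac{c-\beta^2}{1+c}$ equals $C_{\alpha,s_a}$, and this concludes the argument. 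I do not expect any genuine obstacle beyond bookkeeping: the one step deserving a little care is the lower bound for the harmonic-average penalty weight $\eta_{s_a,e}^n$, which is responsible for the squared lower constant and the ratio appearing in the formula for $C_{\alpha,s_a}$, and — exactly as in \cref{lem:coercivity_pressure} — one must keep track of the sign $\theta_{s_a}-1\le0$ so that the cross term is absorbed rather than fought against.
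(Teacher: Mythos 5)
Your proposal is correct and follows essentially the same route as the paper: the paper's proof also tests with $v_h=w_h$, lower-bounds the volume and harmonic-average penalty terms via \ref{hyp_bounds}, controls the face term through \eqref{eq:bound_consistency_aqueous_saturation} and the trace inequality with $h_e\le h_K$ and $\theta_{s_a}-1\le 0$, and concludes with \eqref{eq:important_ineq} using exactly the choices of $c$, $\beta$, $x$, $y$ you give. The only cosmetic difference is that you spell out the harmonic-mean lower bound $\eta_{s_a,e}^n\ge(\underline{C}_{p_{c,a}}\underline{C}_{\lambda_a}\kappa_*)^2(\overline{C}_{p_{c,a}}\overline{C}_{\lambda_a}\kappa^*)^{-1}$, which the paper uses implicitly.
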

		
		\begin{proof}
			Using \cref{eq:ah_aqueous_saturation}, \cref{eq:bound_consistency_aqueous_saturation}, the trace inequality \cref{eq:trace_ineq} and the fact that for all $K \in \mathcal{E}_h$ and all $e \in \partial K$, $h_e \leq h_K$, and that $\theta_{s_a} - 1 \leq 0$ we have
			\begin{equation}
				\begin{split}
					b_{a}^{n}(w_h,w_h) &= \sum_{K \in \mathcal{E}_h}\int_K \kappa\lambda_a^n \del[1]{\partial_{s_a}p_{c,a}}^{+,n} \abs[1]{\nabla w_h}^2
					+ \sum_{e \in \Gamma_h }\alpha_{s_a,e}h_e^{-1}\int_e \eta_{s_a,e}^n \del[1]{\sbr[1]{w_h}}^2
					\\
					&\quad + (\theta_{s_a} - 1) \sum_{e \in \Gamma_h}\int_e \cbr{\kappa \lambda_a^n \del[1]{\partial_{s_a}p_{c,a}}^{+,n} \nabla w_h \cdot \boldsymbol{n}_e}\sbr[1]{w_h} 
					\\
					&\geq \underline{C}_{p_{c,a}} \kappa_* \underline{C}_{\lambda_a} \norm[1]{\nabla w_h}^2 + \alpha_{s_a,*} \frac{\underline{C}_{p_{c,a}}^2 \underline{C}_{\lambda_a}^2 \kappa_*^2}{\overline{C}_{p_{c,a}} \overline{C}_{\lambda_a} \kappa^*} \abs[1]{w_h}_{\text{J}}^2
					+ (\theta_{s_a} - 1) \overline{C}_{\lambda_a} \kappa^* \overline{C}_{p_{c,a}} C_{\text{tr}} \norm[1]{\nabla w_h} \abs[1]{w_h}_{\text{J}}.
				\end{split}
				\label{eq:ah_aqueous_saturation1}
			\end{equation}
			Using \cref{eq:important_ineq} with 
			$c = \alpha_{s_a,*} \underline{C}_{p_{c,a}} \underline{C}_{\lambda_a} \kappa_*\del[1]{\overline{C}_{p_{c,a}} \overline{C}_{\lambda_a} \kappa^*}^{-1}$, 
			$\beta = 0.5(1 - \theta_{s_a}) \overline{C}_{\lambda_a} \kappa^* \overline{C}_{p_{c,a}} C_{\text{tr}}$ $ \del[1]{\underline{C}_{p_{c,a}} \kappa_* \underline{C}_{\lambda_a}}^{-1}$, 
			$x = \del[1]{\underline{C}_{p_{c,a}} \kappa_* \underline{C}_{\lambda_a}}^{1/2} \norm[1]{\nabla_h w_h}$, 
			and $y = \del[1]{\underline{C}_{p_{c,a}} \kappa_* \underline{C}_{\lambda_a}}^{1/2} \abs[1]{w_h}_{\text{J}}$, concludes the proof.
		\end{proof}
		
		Now, we prove that $b_{a}$ is bounded.
		
		\begin{lemma}[Boundedness of $b_{a}$]
			There exists a constant $C_{\text{B},s_a} > 0$ independent of $h$ such that, for all $n\geq 0$, $v_h \in X_{h,\Gamma_\D^{s_a}}$ and $w_h \in X_{h,\Gamma_\D^{s_a}}$, the following relation is satisfied
			\begin{equation}
				\abs[1]{b_{a}^n(v_h, w_h)} \leq C_{\text{B},s_a}\vertiii{v_h}\,\, \vertiii{w_h}.
				\label{eq:boundedness_aqueous_saturation}
			\end{equation}
			In addition, there exists a constant $C_{\text{B}_*,s_a} > 0$ independent of $h$ and $\tau$ such that for any  $v \in H^2(\Omega)+X_{h,\Gamma_\D^{s_a}}$ and any $w_h\in X_{h,\Gamma_\D^{s_a}}$, the following bound holds
			\begin{equation}
				\abs[1]{b_{a}^n(v,w_h)} \leq C_{\text{B}_*,s_a} \vertiii{v}_{*}\vertiii{w_h}.
				\label{eq:boundedness_continuous_saturation}
			\end{equation}
			\label{lem:boundedness_aqueous_saturation}
		\end{lemma}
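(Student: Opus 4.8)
The plan is to mimic, term by term, the proof of \cref{lem:boundedness_pressure}. Writing out $b_a^n$ from \eqref{eq:ah_aqueous_saturation}, I would split
\[
\abs[1]{b_a^n(v_h,w_h)} \le T_1 + T_2 + T_3 + T_4,
\]
where $T_1 = \abs[0]{\sum_{K}\int_K \kappa\lambda_a^n\del[1]{\partial_{s_a}p_{c,a}}^{+,n}\nabla v_h\cdot\nabla w_h}$ is the volume term, $T_2$ the penalty term, $T_3$ the consistency term carrying $\nabla v_h$, and $T_4$ the symmetrization term carrying $\nabla w_h$. First I would bound $T_1$ by Cauchy--Schwarz together with the uniform upper bounds $\lambda_a^n\le\overline{C}_{\lambda_a}$, $\del[1]{\partial_{s_a}p_{c,a}}^{+,n}\le\overline{C}_{p_{c,a}}$ and $\kappa\le\kappa^*$ from \ref{hyp_bounds}, giving $T_1\le\overline{C}_{\lambda_a}\overline{C}_{p_{c,a}}\kappa^*\vertiii{v_h}\,\vertiii{w_h}$. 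For $T_2$ I would use that $\alpha_{s_a,e}\le\alpha_{s_a}^*$ and, since the harmonic average of two numbers lying in $[\underline{C}_{\lambda_a}\underline{C}_{p_{c,a}}\kappa_*,\overline{C}_{\lambda_a}\overline{C}_{p_{c,a}}\kappa^*]$ is at most $\del[1]{\overline{C}_{\lambda_a}\overline{C}_{p_{c,a}}\kappa^*}^2\del[1]{\underline{C}_{\lambda_a}\underline{C}_{p_{c,a}}\kappa_*}^{-1}$, we get $T_2\le\alpha_{s_a}^*\del[1]{\overline{C}_{\lambda_a}\overline{C}_{p_{c,a}}\kappa^*}^2\del[1]{\underline{C}_{\lambda_a}\underline{C}_{p_{c,a}}\kappa_*}^{-1}\abs[1]{v_h}_{\text{J}}\,\abs[1]{w_h}_{\text{J}}\le\alpha_{s_a}^*\del[1]{\overline{C}_{\lambda_a}\overline{C}_{p_{c,a}}\kappa^*}^2\del[1]{\underline{C}_{\lambda_a}\underline{C}_{p_{c,a}}\kappa_*}^{-1}\vertiii{v_h}\,\vertiii{w_h}$. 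For $T_3$ I would invoke \eqref{eq:bound_consistency_aqueous_saturation}, then $h_e\le h_K$ and the trace inequality \eqref{eq:trace_ineq} applied to the discrete function $\nabla v_h$ elementwise, exactly as for $T_3$ in \cref{lem:boundedness_pressure}, obtaining $T_3\le\overline{C}_{\lambda_a}\overline{C}_{p_{c,a}}\kappa^*C_{\text{tr}}\vertiii{v_h}\,\vertiii{w_h}$. The term $T_4$ is handled identically with the roles of $v_h$ and $w_h$ exchanged and an extra factor $\abs{\theta_{s_a}}\le1$. Adding the four bounds yields \eqref{eq:boundedness_aqueous_saturation} with, for instance, $C_{\text{B},s_a}=4\overline{C}_{\lambda_a}\overline{C}_{p_{c,a}}\kappa^*\max\del[0]{\alpha_{s_a}^*\overline{C}_{\lambda_a}\overline{C}_{p_{c,a}}\kappa^*\del[1]{\underline{C}_{\lambda_a}\underline{C}_{p_{c,a}}\kappa_*}^{-1},\,C_{\text{tr}},\,1}$.

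For the second estimate \eqref{eq:boundedness_continuous_saturation}, I would repeat the same decomposition for $v\in H^2(\Omega)+X_{h,\Gamma_\D^{s_a}}$ and $w_h\in X_{h,\Gamma_\D^{s_a}}$, noting that the jump $\sbr[0]{v}$ and the broken gradient $\nabla_h v$ remain well defined. The terms $T_1$, $T_2$ and $T_4$ are bounded exactly as above: $T_1$ and $T_2$ only use Cauchy--Schwarz and \ref{hyp_bounds}, while in $T_4$ the trace inequality is applied to $\nabla w_h$, which is still a polynomial; each of them is at most a constant times $\vertiii{v}\,\vertiii{w_h}\le\vertiii{v}_{*}\,\vertiii{w_h}$, using $\vertiii{v}\le\vertiii{v}_{*}$ from \eqref{eq:stab_norm}.

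The only term needing a different argument is $T_3$, and this is the place where the stabilized norm $\vertiii{\cdot}_{*}$ is used: since $v$ is not piecewise polynomial, the discrete trace inequality is unavailable for $\nabla v$. I would observe that the proof of \eqref{eq:bound_consistency_aqueous_saturation} uses only \ref{hyp_bounds} and Cauchy--Schwarz, hence it holds verbatim with $v$ in place of $v_h$, giving
\[
T_3\le\overline{C}_{\lambda_a}\kappa^*\overline{C}_{p_{c,a}}\del[2]{\sum_{K\in\mathcal{E}_h}\sum_{e\in\partial K}h_e\norm[1]{\nabla v|_K\cdot\boldsymbol{n}_e}_{L^2(e)}^2}^{1/2}\abs[1]{w_h}_{\text{J}}.
\]
Using $h_e\le h_K$ and $\sum_{e\in\partial K}\norm[1]{\nabla v|_K\cdot\boldsymbol{n}_e}_{L^2(e)}^2=\norm[1]{\nabla_h v|_K\cdot\boldsymbol{n}_K}_{L^2(\partial K)}^2$, the square root above is dominated by $\vertiii{v}_{*}$ by the very definition \eqref{eq:stab_norm}, so $T_3\le\overline{C}_{\lambda_a}\kappa^*\overline{C}_{p_{c,a}}\vertiii{v}_{*}\,\vertiii{w_h}$. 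Collecting the four bounds proves \eqref{eq:boundedness_continuous_saturation}. There is no genuinely hard step here — the argument is a routine adaptation of \cref{lem:boundedness_pressure} — and the only point requiring care is precisely this treatment of the consistency term $T_3$ in the second inequality, where one must bypass the discrete trace inequality and absorb the face term into $\vertiii{\cdot}_{*}$, the norm introduced in \eqref{eq:stab_norm} for exactly this purpose.
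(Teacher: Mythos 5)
Your proposal is correct and follows essentially the approach the paper intends: the paper omits this proof as ``analogous to \cref{lem:boundedness_pressure}'', and your term-by-term $T_1$--$T_4$ argument, with the constants $\overline{C}_{\lambda_a}\overline{C}_{p_{c,a}}\kappa^*$ replacing $\overline{C}_{\lambda_t}\kappa^*$, is exactly that adaptation. Your handling of the second estimate --- reusing \eqref{eq:bound_consistency_aqueous_saturation} for non-discrete $v$ and absorbing the face term into $\vertiii{v}_{*}$ instead of invoking the discrete trace inequality --- matches the paper's remark that only the bound for $T_3$ changes.
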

		
		\begin{proof}
			The proof of this lemma is analogous to the one of \cref{lem:boundedness_pressure} and therefore is omitted. 
		\end{proof}
		
		\begin{corollary}
			There exists a unique solution to problem \cref{eq:disc_Sa}.
		\end{corollary}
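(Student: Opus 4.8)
The plan is to repeat, essentially verbatim, the argument behind the corresponding statement for the liquid pressure. Problem~\cref{eq:disc_Sa} is linear in its unknown $S_{a_h}^{n+1}$, so by the Lax--Milgram theorem it suffices to verify that the bilinear form
\[
a_a^n(v_h,w_h) := \tfrac{1}{\tau}(\phi v_h, w_h) + b_a^n(v_h, w_h)
\]
is coercive and bounded on $X_{h,\Gamma_\D^{s_a}}$ with respect to $\vertiii{\cdot}$, and that the right-hand side defines a bounded linear functional on that space. First I would dispose of the inhomogeneous Dirichlet condition: since $s_a^{\mathrm{bdy}}$ is the trace of a function $\bar s_a \in X_h$, I set $S_{a_h}^{n+1} = \tilde S + \bar s_a$ with $\tilde S \in X_{h,\Gamma_\D^{s_a}}$, so that \cref{eq:disc_Sa} becomes equivalent to finding $\tilde S \in X_{h,\Gamma_\D^{s_a}}$ with $a_a^n(\tilde S, w_h) = \tfrac{1}{\tau}(\phi S_{a_h}^n, w_h) + f_a^n(w_h) - a_a^n(\bar s_a, w_h)$ for all $w_h \in X_{h,\Gamma_\D^{s_a}}$; the right-hand side is a bounded linear functional on the finite-dimensional space $X_{h,\Gamma_\D^{s_a}}$.

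For coercivity, I would use that $\phi$ is a positive constant, so $\tfrac{1}{\tau}(\phi w_h, w_h) = \tfrac{\phi}{\tau}\norm{w_h}^2 \ge 0$, whence $a_a^n(w_h, w_h) \ge b_a^n(w_h, w_h) \ge C_{\alpha,s_a}\vertiii{w_h}^2$ by \cref{lem:coercivity_aqueous_saturation} (which applies because $\alpha_{s_a,*}$ is assumed large enough). For boundedness, I would combine \cref{lem:boundedness_aqueous_saturation} with the discrete Poincar\'e inequality \cref{eq:poincare}: $\tfrac{1}{\tau}\abs{(\phi v_h, w_h)} \le \tfrac{\phi}{\tau}\norm{v_h}\,\norm{w_h} \le \tfrac{\phi C_P^2}{\tau}\vertiii{v_h}\,\vertiii{w_h}$, so that $\abs{a_a^n(v_h, w_h)} \le \big(\tfrac{\phi C_P^2}{\tau} + C_{\text{B},s_a}\big)\vertiii{v_h}\,\vertiii{w_h}$. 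Lax--Milgram then produces a unique $\tilde S$, hence a unique $S_{a_h}^{n+1} = \tilde S + \bar s_a$ solving \cref{eq:disc_Sa}.

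I do not expect any genuine obstacle here. The only points worth a sentence are that the zeroth-order term $\tfrac{1}{\tau}(\phi\cdot,\cdot)$ contributes only to coercivity (it is merely $\norm{\cdot}$-controlled, not $\vertiii{\cdot}$-controlled, but this is harmless since it is positive semidefinite) and that one must lift the Dirichlet datum before invoking Lax--Milgram; both are routine. In fact, since $X_{h,\Gamma_\D^{s_a}}$ is finite-dimensional, the coercivity estimate alone already forces the associated square linear system to be nonsingular, so the full strength of Lax--Milgram is not even required.
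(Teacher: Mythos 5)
Your proof is correct and follows essentially the same route as the paper: the paper's own argument is a short appeal to the coercivity and boundedness of $b_a$ (\cref{lem:coercivity_aqueous_saturation}, \cref{lem:boundedness_aqueous_saturation}), the positivity of $\phi$, and the Lax--Milgram theorem, which is exactly what you carry out. Your version simply spells out the details the paper leaves implicit (the Dirichlet lift, the nonnegativity of the mass term, the Poincar\'e bound for boundedness, and the remark that finite dimensionality plus coercivity already suffices), all of which are fine.
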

		
		\begin{proof}
			The coercivity \cref{lem:coercivity_aqueous_saturation} and boundedness \cref{lem:boundedness_aqueous_saturation} of $a_{s_a}$, together with the fact that $\phi$ and $\rho_a$ are strictly positive, imply, using the Lax--Milgram theorem, that problem \cref{eq:disc_Sa} is well-posed.
		\end{proof}
		
		\subsection{Vapor saturation}
		
		
		\begin{lemma}[Consistency of $b_{v}$]
			\begin{equation}
				(\phi\del[0]{\partial_t s_v}^{n+1},w_h)
				+ \tilde{b}^{n+1}_{v}(s_v^{n+1}, w_h) = \tilde{f}^{n+1}_{v}(w_h), \quad \forall w_h \in X_h, \quad \forall n\geq 0,
				\label{eq:consistency_vapor_saturation}
			\end{equation}
			where $\tilde{b}^{n+1}_{v}(s_v,w_h) = b_{v}(s_v,w_h;p_\ell^{n+1}, s_a^{n+1}, s_v^{n+1})$, and $\tilde{f}^{n+1}_{v}(w_h) = f_{v}(w_h; p_\ell^{n+1}, s_a^{n+1}, s_v^{n+1})$.
		\end{lemma}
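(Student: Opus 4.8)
The plan is to imitate the consistency proof for $b_{p}$ (Lemma~\ref{lem:consistency_pressure}), now starting from the vapor-saturation equation~\eqref{eq:vapor_saturation_eq} together with its Neumann condition~\eqref{eq:Neumann_BC_vapor_saturation}. Throughout I would suppress the superscript $n+1$ and write $\tilde{\lambda}_v$, $(\partial_{s_v}p_{c,v})$, etc.\ for the coefficients evaluated at the exact solution at time $t_{n+1}$.

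First I would rewrite $\tilde{b}^{n+1}_{v}(s_v^{n+1},w_h)$. Since $s_v^{n+1}\in H^2(\Omega)$, the jump $\sbr[0]{s_v^{n+1}}$ vanishes on every interior face, so the penalty term and the $\theta_{s_v}$-symmetrization term in~\eqref{eq:ah_vapor_saturation} both drop out, leaving
\[
\tilde{b}^{n+1}_{v}(s_v^{n+1},w_h) = \sum_{K \in \mathcal{E}_h}\int_K \kappa\tilde{\lambda}_v (\partial_{s_v}p_{c,v})\, \nabla s_v^{n+1}\cdot \nabla w_h
- \sum_{e \in \Gamma_h}\int_e \cbr[1]{\kappa\tilde{\lambda}_v (\partial_{s_v}p_{c,v})\, \nabla s_v^{n+1}\cdot \boldsymbol{n}_e}\sbr[1]{w_h}.
\]
Then I would integrate by parts elementwise on the volume term and regroup the face integrals $\sum_{K}\int_{\partial K}\kappa\tilde{\lambda}_v(\partial_{s_v}p_{c,v})\nabla s_v^{n+1}\cdot\boldsymbol{n}_K\,w_h$ face by face. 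Exactly as in Lemma~\ref{lem:consistency_pressure}, on each interior face the exact flux $\kappa\tilde{\lambda}_v(\partial_{s_v}p_{c,v})\nabla s_v^{n+1}$ has a single-valued normal trace, so its weighted average equals that common value and the interior contributions cancel against the $\cbr[0]{\cdot}$-term above; on $\partial\Omega$ only the $\Gamma_\N^{s_v}$-faces survive (the $\Gamma_\D^{s_v}$-faces contribute nothing on the discrete test space). This collapses $\tilde{b}^{n+1}_{v}(s_v^{n+1},w_h)$ to $-\sum_{K}\int_K \nabla\cdot\del[1]{\kappa\tilde{\lambda}_v(\partial_{s_v}p_{c,v})\nabla s_v^{n+1}}w_h + \sum_{e\in\Gamma_\N^{s_v}}\int_e \kappa\tilde{\lambda}_v(\partial_{s_v}p_{c,v})\nabla s_v^{n+1}\cdot\boldsymbol{n}_e\,w_h$.

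Next I would treat $\tilde{f}^{n+1}_{v}(w_h)$ the same way. In the consistency setting the projected velocity $\boldsymbol{u}_h^{n+1}$ is replaced by the exact Darcy velocity $-\kappa\nabla p_\ell^{n+1}$, and the upwind value $(\tilde{\lambda}_v)^{\uparrow}_{s_v}$ of the continuous coefficient coincides with $\tilde{\lambda}_v$ on both sides of each interior face; hence, after integrating by parts the volume term of $f_{v}$, the $(\cdot)^{\uparrow}_{s_v}$-term and the gravity $\cbr[0]{\cdot}$-term reassemble into the interior-face and volume contributions that cancel the remaining boundary fluxes, leaving once more only a divergence term, the source term, and the $\Gamma_\N^{s_v}$-trace. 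Adding $(\phi(\partial_t s_v)^{n+1},w_h)$ and matching the two computations, the identity~\eqref{eq:consistency_vapor_saturation} reduces precisely to equation~\eqref{eq:vapor_saturation_eq} tested against $w_h$ on each element and to the Neumann condition~\eqref{eq:Neumann_BC_vapor_saturation} identifying the $\Gamma_\N^{s_v}$-traces with $j_{s_v}^{\N}$.

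The only genuinely delicate part is the bookkeeping: carrying the signs of the three fluxes $\kappa\tilde{\lambda}_v(\partial_{s_v}p_{c,v})\nabla s_v^{n+1}$, $\kappa\tilde{\lambda}_v\nabla p_\ell^{n+1}$ and $\rho_v\kappa\tilde{\lambda}_v\boldsymbol{g}$ correctly through the integrations by parts, and checking that the upwind operator $(\cdot)^{\uparrow}_{s_v}$ and the harmonic weights hidden in $\cbr[0]{\cdot}$ are harmless precisely because every relevant quantity is single-valued on interior faces once evaluated at the continuous exact solution. Everything else is the routine integration-by-parts/flux-continuity argument already carried out for $b_{p}$; in the write-up I would state the lemma and refer to the proof of Lemma~\ref{lem:consistency_pressure}, as was done for $b_{a}$.
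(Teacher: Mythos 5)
Your proposal is correct and follows essentially the same route as the paper, which omits this proof and simply points to the argument for \cref{lem:consistency_pressure}: drop the jump-dependent terms since the exact solution is smooth, integrate by parts elementwise, use single-valuedness of the exact fluxes (so the weighted averages and the upwind values collapse to the common trace and $\boldsymbol{u}_h^{n+1}$ is replaced by $-\kappa\nabla p_\ell^{n+1}$), and conclude with \cref{eq:vapor_saturation_eq} and the Neumann condition \cref{eq:Neumann_BC_vapor_saturation}. Your observation that test functions must vanish on $\Gamma_\D^{s_v}$ is the intended reading (matching the aqueous case), even though the lemma as stated writes $w_h \in X_h$.
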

		
		\begin{proof}
			The proof is similar to the other consistency lemma and therefore not shown here.
		\end{proof}
		
		\begin{lemma}
			For all $\del[0]{v_h, w_h} \in X_h \times X_h$ and any $n\geq 0$, the following relation is satisfied
			\begin{multline}
				\abs{\sum_{e \in \Gamma_h } \int_e \cbr{\kappa \lambda_v^n \del[1]{\partial_{s_v}p_{c,v}}^{n} \nabla v_h \cdot \boldsymbol{n}_e}\sbr[1]{w_h}} 
				\\
				\leq \overline{C}_{\lambda_v} \kappa^* \overline{C}_{p_{c,v}} \del{\sum_{K \in \mathcal{E}_h} \sum_{e \in \partial K} h_e \norm[1]{\nabla v_h|_K \cdot \boldsymbol{n}_e}_{L^2(e)}^2 }^{1/2}\abs[1]{w_h}_{\text{J}}.
				\label{eq:bound_consistency_vapor_saturation}
			\end{multline}
		\end{lemma}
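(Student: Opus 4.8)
The plan is to reproduce, essentially verbatim, the argument already used for the pressure bound \eqref{eq:bound_consistency_pressure} (and for the aqueous bound \eqref{eq:bound_consistency_aqueous_saturation}), the only change being the upper bound one invokes for the diffusion coefficient. First I would fix an interior face $e=\partial K_1\cap\partial K_2\in\Gamma_h$ and write the weighted average as $\omega_1 A_1\,\nabla v_h|_{K_1}\cdot\boldsymbol{n}_e+\omega_2 A_2\,\nabla v_h|_{K_2}\cdot\boldsymbol{n}_e$, where $A_i=\del[1]{\kappa\lambda_v^n\del[1]{\partial_{s_v}p_{c,v}}^{n}}|_{K_i}$ and the weights satisfy $\omega_1,\omega_2\ge 0$ and $\omega_1+\omega_2=1$. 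By Hypothesis~\ref{hyp_bounds} one has $\kappa\le\kappa^*$, $\lambda_v^n\le\overline{C}_{\lambda_v}$ and $\del[1]{\partial_{s_v}p_{c,v}}^{n}\le\overline{C}_{p_{c,v}}$, so $A_1,A_2\le\overline{C}_{\lambda_v}\kappa^*\overline{C}_{p_{c,v}}$ and, since the weights form a convex combination,
\[
\abs[1]{\cbr{\kappa \lambda_v^n \del[1]{\partial_{s_v}p_{c,v}}^{n} \nabla v_h \cdot \boldsymbol{n}_e}}\le\overline{C}_{\lambda_v}\kappa^*\overline{C}_{p_{c,v}}\del[1]{\abs[1]{\nabla v_h|_{K_1}\cdot\boldsymbol{n}_e}+\abs[1]{\nabla v_h|_{K_2}\cdot\boldsymbol{n}_e}}\quad\text{on }e.
\]

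Next I would multiply by $\sbr[1]{w_h}$, integrate over $e$, apply the Cauchy--Schwarz inequality on $L^2(e)$, and insert the factor $h_e^{1/2}h_e^{-1/2}$ so that the jump term carries the weight $h_e^{-1/2}$ appearing in $\abs[1]{\cdot}_{\text{J}}$. Summing the resulting face inequalities over all $e\in\Gamma_h$, applying the discrete Cauchy--Schwarz inequality to split off the factor $\abs[1]{w_h}_{\text{J}}$, and reindexing the remaining double sum so that each interior face $e$ contributes $h_e\norm[1]{\nabla v_h|_{K}\cdot\boldsymbol{n}_e}_{L^2(e)}^2$ to each of its two adjacent elements $K$ (bounding this above by the sum over all $e\in\partial K$, which only adds nonnegative boundary-face terms) then yields exactly \eqref{eq:bound_consistency_vapor_saturation}.

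I do not anticipate any real obstacle: the computation is structurally identical to the two preceding lemmas. The only points that merit a moment's care are that the weights $\omega_1,\omega_2$ in the average operator themselves depend on the discrete coefficient $\kappa\lambda_v^n\del[1]{\partial_{s_v}p_{c,v}}^{n}$ — harmless, since nonnegativity and $\omega_1+\omega_2=1$ are all the convexity bound above uses — and that, because $\underline{C}_{p_{c,v}}$ is only assumed nonnegative in \ref{hyp_bounds}, no lower bound on the coefficient is needed anywhere in this particular estimate, which involves only the upper bounds.
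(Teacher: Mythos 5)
Your proposal is correct and is essentially the paper's own argument: the paper proves this lemma by declaring it ``completely analogous'' to the pressure bound \eqref{eq:bound_consistency_pressure}, whose proof is exactly your face-by-face estimate using the upper bounds of \ref{hyp_bounds}, Cauchy--Schwarz with the $h_e^{1/2}h_e^{-1/2}$ split, and the reindexing of faces by adjacent elements. Your added observations (that only nonnegativity and $\omega_1+\omega_2=1$ of the coefficient-dependent weights are used, and that no lower bound on $\partial_{s_v}p_{c,v}$ is needed) are accurate and, if anything, make the constant bookkeeping slightly cleaner than the paper's.
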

		
		\begin{proof}
			The proof of this lemma is completely analogous to the proof of \cref{eq:bound_consistency_pressure}.
		\end{proof}
		
		Now we can show that $b_{v}$ is coercive on $X_h$.
		
		\begin{lemma}[Coercivity of $b_{v}$]
			Assume that $\alpha_{s_v,*}$ satisfies that
			\begin{equation}
				\alpha_{s_v,*} > 0.25\del{1 - \theta_{s_v}}^2 \del[1]{\overline{C}_{\lambda_v} \overline{C}_{p_{c,v}} \kappa^*}^3 \del[1]{\underline{C}_{\lambda_v} \underline{C}_{p_{c,v}} \kappa_*}^{-3} C_{\text{tr}}^2,
			\end{equation}
			where $C_{\text{tr}}$ results from the trace inequality \cref{eq:trace_ineq}. Then, the bilinear form $b_{v}^n$ defined by \cref{eq:ah_vapor_saturation} is coercive on $X_{h,\Gamma_\D^{s_v}}$ with respect to the norm $\vertiii[1]{\cdot}$ defined by \cref{eq:dif_norm}, i.e., for all $w_h \in X_{h,\Gamma_\D^{s_v}}$, the following relation is satisfied:
			\begin{equation}
				b_{v}^n(w_h,w_h) \geq C_{\alpha,s_v}\vertiii[1]{w_h},
				\label{eq:coercivity_vapor_saturation}
			\end{equation}
			with
			\begin{multline}
				C_{\alpha,s_v} = \frac{\alpha_{s_v,*}\underline{C}_{p_{c,v}}\underline{C}_{\lambda_v} \kappa_*\del[1]{\overline{C}_{p_{c,v}} \overline{C}_{\lambda_v} \kappa^*}^{-1} - 0.25\del{1 - \theta_{s_v}}^2 		\del[1]{\overline{C}_{p_{c,v}} \overline{C}_{\lambda_v} \kappa^*}^2\del[1] {\underline{C}_{p_{c,v}}\underline{C}_{\lambda_v} \kappa_*}^{-2} C_{\text{tr}}^2}{1 + \alpha_{s_v,*}\underline{C}_{p_{c,v}} \underline{C}_{\lambda_v} \kappa_*\del[1]{\overline{C}_{p_{c,v}} \overline{C}_{\lambda_v} \kappa^*}^{-1}}
				%
			\end{multline}
			%
			\label{lem:coercivity_vapor_saturation}
		\end{lemma}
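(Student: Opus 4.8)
The plan is to follow the structure of the proofs of \cref{lem:coercivity_pressure} and \cref{lem:coercivity_aqueous_saturation}, since the bilinear form $b_v$ has exactly the same three-term shape as $b_p$ and $b_a$. First I would take $v_h = w_h$ in \cref{eq:ah_vapor_saturation}; the two average terms merge into a single term carrying the prefactor $(\theta_{s_v} - 1)$, giving
\begin{multline*}
	b_v^n(w_h,w_h) = \sum_{K \in \mathcal{E}_h}\int_K \kappa\lambda_v^n \partial_{s_v}p_{c,v}^n |\nabla w_h|^2
	+ \sum_{e \in \Gamma_h}\alpha_{s_v,e}h_e^{-1}\int_e \eta_{s_v,e}^n \del[1]{\sbr[1]{w_h}}^2 \\
	+ (\theta_{s_v} - 1)\sum_{e \in \Gamma_h}\int_e \cbr{\kappa\lambda_v^n \partial_{s_v}p_{c,v}^n \nabla w_h \cdot \boldsymbol{n}_e}\sbr[1]{w_h}.
\end{multline*}

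Next I would bound the first two terms from below using \ref{hyp_bounds}: on each element $\kappa\lambda_v^n \partial_{s_v}p_{c,v}^n \geq \underline{C}_{p_{c,v}}\underline{C}_{\lambda_v}\kappa_*$, and since $\eta_{s_v,e}^n$ is the harmonic average of two quantities lying in $[\underline{C}_{p_{c,v}}\underline{C}_{\lambda_v}\kappa_*,\ \overline{C}_{p_{c,v}}\overline{C}_{\lambda_v}\kappa^*]$, one has $\eta_{s_v,e}^n \geq (\underline{C}_{p_{c,v}}\underline{C}_{\lambda_v}\kappa_*)^2 (\overline{C}_{p_{c,v}}\overline{C}_{\lambda_v}\kappa^*)^{-1}$. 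For the cross term, I would apply \cref{eq:bound_consistency_vapor_saturation}, then the trace inequality \cref{eq:trace_ineq} together with $h_e \leq h_K$, to get the upper bound $\overline{C}_{\lambda_v}\kappa^*\overline{C}_{p_{c,v}}C_{\text{tr}}\norm[1]{\nabla_h w_h}\abs[1]{w_h}_{\text{J}}$; since $\theta_{s_v} - 1 \leq 0$ this enters the estimate with a minus sign. After the rescaling $x = (\underline{C}_{p_{c,v}}\underline{C}_{\lambda_v}\kappa_*)^{1/2}\norm[1]{\nabla_h w_h}$, $y = (\underline{C}_{p_{c,v}}\underline{C}_{\lambda_v}\kappa_*)^{1/2}\abs[1]{w_h}_{\text{J}}$ this is of the form $x^2 - 2\beta xy + c y^2$ with $c = \alpha_{s_v,*}\underline{C}_{p_{c,v}}\underline{C}_{\lambda_v}\kappa_*(\overline{C}_{p_{c,v}}\overline{C}_{\lambda_v}\kappa^*)^{-1}$ and $\beta = 0.5(1-\theta_{s_v})\overline{C}_{\lambda_v}\kappa^*\overline{C}_{p_{c,v}}C_{\text{tr}}(\underline{C}_{p_{c,v}}\underline{C}_{\lambda_v}\kappa_*)^{-1}$.

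Finally I would invoke the elementary inequality \cref{eq:important_ineq}: the hypothesis imposed on $\alpha_{s_v,*}$ is exactly the condition $c > \beta^2$, and rewriting $\frac{c-\beta^2}{1+c}$ in terms of the original constants produces the claimed $C_{\alpha,s_v}$, so that $b_v^n(w_h,w_h) \geq C_{\alpha,s_v}\vertiii[1]{w_h}^2$.

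I do not anticipate a genuine obstacle — the computation is line-for-line the same as in the pressure and aqueous-saturation cases. The only points worth flagging are bookkeeping ones: although \ref{hyp_bounds} permits $\underline{C}_{p_{c,v}} = 0$, coercivity and the very definition of $C_{\alpha,s_v}$ require $\underline{C}_{p_{c,v}} > 0$, so one should read that hypothesis as giving a strictly positive lower bound on $\partial_{s_v}p_{c,v}$ here; and the right-hand side of \cref{eq:coercivity_vapor_saturation} should carry a square, i.e. $\vertiii[1]{w_h}^2$.
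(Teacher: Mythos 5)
Your proposal is correct and follows exactly the route the paper intends: the paper simply states that this proof is analogous to that of \cref{lem:coercivity_aqueous_saturation}, and your argument is precisely that analogue, with the same use of \cref{eq:bound_consistency_vapor_saturation}, the trace inequality, and \cref{eq:important_ineq} with the corresponding choices of $c$, $\beta$, $x$, $y$. Your two bookkeeping remarks are also well taken: coercivity does require $\underline{C}_{p_{c,v}}>0$ even though \ref{hyp_bounds} only states $\underline{C}_{p_{c,v}}\geq 0$, and the right-hand side of \cref{eq:coercivity_vapor_saturation} should indeed read $\vertiii[1]{w_h}^2$.
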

		
		\begin{proof}
			The proof is analogous to that of \cref{eq:coercivity_aqueous_saturation} and therefore not shown here.
		\end{proof}
		
		Now, we prove that $b_{v}$ is bounded.
		
		\begin{lemma}[Boundedness of $b_{v}$]
			There exists a constant $C_{\text{B},s_v} > 0$ independent of $h$ such that, for all $n\geq 0$, $v_h \in X_{h,\Gamma_\D^{s_v}}$ and $w_h \in X_{h,\Gamma_\D^{s_v}}$, the following relation is satisfied
			\begin{equation}
				\abs[1]{b_{v}^n(v_h, w_h)} \leq C_{\text{B},s_v}\vertiii{v_h}\,\, \vertiii{w_h}.
				\label{eq:boundedness_vapor_saturation}
			\end{equation}
			In addition, there exists a constant $C_{\text{B}_*,s_v} > 0$ independent of $h$ and $\tau$ such that for any  $v \in H^2(\Omega)+X_{h,\Gamma_\D^{s_v}}$ and any $w_h\in X_{h,\Gamma_\D^{s_v}}$, the following bound holds
			\begin{equation}
				\abs[1]{b_{v}^n(v,w_h)} \leq C_{\text{B}_*,s_v} \vertiii{v}_{*}\vertiii{w_h}.
				\label{eq:boundedness_continuous_gas_fraction}
			\end{equation}
			\label{lem:boundedness_vapor_saturation}
		\end{lemma}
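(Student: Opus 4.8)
The plan is to mirror the proof of \cref{lem:boundedness_pressure}. Using \eqref{eq:ah_vapor_saturation}, write $b_v^n(v_h,w_h) = T_1+T_2+T_3+T_4$, where $T_1$ is the element (diffusion) term $\sum_{K}\int_K \kappa\lambda_v^n\partial_{s_v}p_{c,v}^n\,\nabla v_h\cdot\nabla w_h$, $T_2$ is the penalty term, $T_3$ is the consistency term $-\sum_e\int_e\cbr{\kappa\lambda_v^n\partial_{s_v}p_{c,v}^n\nabla v_h\cdot\boldsymbol{n}_e}\sbr{w_h}$, and $T_4 = \theta_{s_v}\sum_e\int_e\cbr{\kappa\lambda_v^n\partial_{s_v}p_{c,v}^n\nabla w_h\cdot\boldsymbol{n}_e}\sbr{v_h}$. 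For $T_1$, Cauchy--Schwarz together with the upper bounds in \ref{hyp_bounds} ($\lambda_v\le\overline{C}_{\lambda_v}$, $\kappa\le\kappa^*$, $\partial_{s_v}p_{c,v}\le\overline{C}_{p_{c,v}}$) gives $T_1\le\overline{C}_{\lambda_v}\kappa^*\overline{C}_{p_{c,v}}\norm[1]{\nabla_h v_h}\norm[1]{\nabla_h w_h}\le\overline{C}_{\lambda_v}\kappa^*\overline{C}_{p_{c,v}}\vertiii{v_h}\vertiii{w_h}$. For $T_2$, since the harmonic average $\mathcal{H}$ of two positive numbers is bounded above by their maximum, one has $\eta_{s_v,e}^n\le\overline{C}_{\lambda_v}\kappa^*\overline{C}_{p_{c,v}}$, so with $\alpha_{s_v,e}\le\alpha_{s_v}^*$ and Cauchy--Schwarz, $T_2\le\alpha_{s_v}^*\overline{C}_{\lambda_v}\kappa^*\overline{C}_{p_{c,v}}\abs{v_h}_{\text J}\abs{w_h}_{\text J}\le\alpha_{s_v}^*\overline{C}_{\lambda_v}\kappa^*\overline{C}_{p_{c,v}}\vertiii{v_h}\vertiii{w_h}$.

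For $T_3$ I would apply \eqref{eq:bound_consistency_vapor_saturation} to obtain $\abs{T_3}\le\overline{C}_{\lambda_v}\kappa^*\overline{C}_{p_{c,v}}\big(\sum_{K}\sum_{e\in\partial K}h_e\norm[1]{\nabla v_h|_K\cdot\boldsymbol{n}_e}_{L^2(e)}^2\big)^{1/2}\abs{w_h}_{\text J}$, then invoke the trace inequality \eqref{eq:trace_ineq} together with $h_e\le h_K$ to replace the face term by $C_{\text{tr}}\norm[1]{\nabla_h v_h}$, which yields $\abs{T_3}\le\overline{C}_{\lambda_v}\kappa^*\overline{C}_{p_{c,v}}C_{\text{tr}}\vertiii{v_h}\vertiii{w_h}$. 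The term $T_4$ is estimated in exactly the same way after exchanging the roles of $v_h$ and $w_h$, picking up the harmless extra factor $\abs{\theta_{s_v}}\le1$. Summing the four contributions gives \eqref{eq:boundedness_vapor_saturation} with, for instance, $C_{\text B,s_v}=\overline{C}_{\lambda_v}\kappa^*\overline{C}_{p_{c,v}}\del{2+2C_{\text{tr}}+\alpha_{s_v}^*}$, a crude choice that is more than enough.

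For the second estimate \eqref{eq:boundedness_continuous_gas_fraction}, with $v\in H^2(\Omega)+X_{h,\Gamma_\D^{s_v}}$, the bounds on $T_1$, $T_2$ and $T_4$ go through verbatim with $\vertiii{v_h}$ replaced by $\vertiii{v}_{*}$, using $\vertiii{v}\le\vertiii{v}_{*}$ and noting that in $T_4$ the gradient falls on the discrete function $w_h$, so the trace inequality still applies there. The only genuine change is in $T_3$: since $\nabla v$ is no longer piecewise polynomial, one cannot pass from the face norm of $\nabla v$ to $\norm[1]{\nabla_h v}$ by an inverse-type trace estimate; instead I would keep the face term $\big(\sum_{K}h_K\norm[1]{\nabla_h v|_K\cdot\boldsymbol{n}_K}_{L^2(\partial K)}^2\big)^{1/2}$, which is precisely the quantity controlled by $\vertiii{v}_{*}$ according to the definition \eqref{eq:stab_norm}, giving $\abs{T_3}\le\overline{C}_{\lambda_v}\kappa^*\overline{C}_{p_{c,v}}\vertiii{v}_{*}\vertiii{w_h}$. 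Collecting the terms yields \eqref{eq:boundedness_continuous_gas_fraction}.

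I do not expect a real obstacle here: the statement is routine once \eqref{eq:bound_consistency_vapor_saturation}, the trace inequality, and \ref{hyp_bounds} are available, and it is a line-by-line transcription of the proof of \cref{lem:boundedness_pressure}. The single point requiring attention is the treatment of $T_3$ in the continuous-argument estimate, where one must use the norm $\vertiii{\cdot}_{*}$ — which explicitly carries the normal-trace-of-gradient contribution — rather than the discrete trace inequality.
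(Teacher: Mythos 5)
Your proposal is correct and follows exactly the route the paper intends: the paper's proof of this lemma is simply a reference to the proof of \cref{lem:boundedness_pressure}, and your term-by-term estimates ($T_1$--$T_4$ via \ref{hyp_bounds}, Cauchy--Schwarz, \eqref{eq:bound_consistency_vapor_saturation} and the trace inequality, with the $T_3$ bound replaced by the $\vertiii{\cdot}_{*}$-norm contribution in the continuous case) are precisely that transcription. No gaps.
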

		
		\begin{proof}
			The proof of this lemma follows the one of \cref{lem:boundedness_pressure} and therefore is not shown here.
		\end{proof}
		
		\begin{corollary}
			There exists a unique solution to the discrete problem \cref{eq:disc_Sv}.
		\end{corollary}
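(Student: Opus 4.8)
\noindent The plan is to rewrite \cref{eq:disc_Sv} as a coercive and bounded linear problem on the finite-dimensional space $X_{h,\Gamma_\D^{s_v}}$ and then invoke the Lax--Milgram theorem, exactly as was done for \cref{eq:disc_pl} and \cref{eq:disc_Sa}. First I would absorb the Dirichlet data: fix any $\tilde{S}_h \in X_h$ whose trace on $\Gamma_\D^{s_v}$ equals $s_v^{\mathrm{bdy}}$ (such a function exists since $s_v^{\mathrm{bdy}}$ is assumed to be the trace of an element of $X_h$), and write $S_{v_h}^{n+1} = \tilde{S}_h + z_h$ with $z_h \in X_{h,\Gamma_\D^{s_v}}$ the new unknown. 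Then \cref{eq:disc_Sv} is equivalent to: find $z_h \in X_{h,\Gamma_\D^{s_v}}$ such that
\[
\mathcal{B}(z_h,w_h) := \tfrac{1}{\tau}(\phi z_h, w_h) + b_v^n(z_h,w_h) = \tfrac{1}{\tau}\del[1]{\phi(S_{v_h}^n - \tilde{S}_h), w_h} + f_v^n(w_h) - b_v^n(\tilde{S}_h, w_h) =: \mathcal{F}(w_h)
\]
for all $w_h \in X_{h,\Gamma_\D^{s_v}}$.

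Next I would verify the hypotheses of the Lax--Milgram theorem on $\del[0]{X_{h,\Gamma_\D^{s_v}}, \vertiii{\cdot}}$. Coercivity of $\mathcal{B}$ is immediate: the mass term is nonnegative because $\phi$ is strictly positive, so $\tfrac{1}{\tau}(\phi w_h,w_h) \geq 0$, and the coercivity of $b_v^n$ from \cref{lem:coercivity_vapor_saturation} gives $\mathcal{B}(w_h,w_h) \geq C_{\alpha,s_v}\vertiii{w_h}^2$. Boundedness of $\mathcal{B}$ follows from the Cauchy--Schwarz inequality together with the discrete Poincar\'e inequality \cref{eq:poincare} for the mass term, namely $\abs[1]{\tfrac{1}{\tau}(\phi v_h, w_h)} \leq \tfrac{1}{\tau}\norm[0]{\phi}_{L^\infty(\Omega)} C_P^2 \vertiii{v_h}\,\vertiii{w_h}$, combined with the boundedness of $b_v^n$ from \cref{lem:boundedness_vapor_saturation}. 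Finally, $\mathcal{F}$ is a bounded linear functional on $X_{h,\Gamma_\D^{s_v}}$: the source term is controlled by \ref{hyp_boundedsource}, the mobilities and the permeability by \ref{hyp_bounds}, the volume and interior-face contributions involving $\boldsymbol{u}_h^{n+1}$ by the fact that $\boldsymbol{u}_h^{n+1} \in \mathbb{RT}_0$ is a fixed function, together with the trace inequality \cref{eq:trace_ineq} and the bound $\abs[1]{\cdot}_{\mathrm{J}} \leq \vertiii{\cdot}$, and the lift term $b_v^n(\tilde{S}_h,\cdot)$ by the boundedness argument of \cref{lem:boundedness_vapor_saturation} applied on $X_h \times X_h$. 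The Lax--Milgram theorem then yields a unique $z_h$, and hence a unique $S_{v_h}^{n+1} = \tilde{S}_h + z_h$.

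I do not expect any real obstacle here, since everything reduces to the coercivity and boundedness lemmas already established for $b_v$ plus the nonnegativity of the mass term. The one point that deserves a little care is that the upwind coefficient $\del[0]{\lambda_v^n}^\uparrow_{s_v}$ entering $f_v^n$ is determined only by the sign of $\cbr[0]{\lambda_v^n \boldsymbol{u}_h^{n+1} + \rho_v \kappa \lambda_v^n \boldsymbol{g}}_{\frac12}\cdot\boldsymbol{n}_e$, which depends solely on the previously computed quantities $P_h^{n+1}, S_{a_h}^{n+1}, S_{v_h}^n$ and not on the unknown $S_{v_h}^{n+1}$; hence $f_v^n$ is genuinely linear in $w_h$ and no fixed-point or monotonicity argument is needed, so the bare Lax--Milgram theorem suffices.
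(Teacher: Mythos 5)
Your argument is correct and follows essentially the same route as the paper: the paper's proof simply invokes the Lax--Milgram theorem using the coercivity (Lemma~\ref{lem:coercivity_vapor_saturation}) and boundedness (Lemma~\ref{lem:boundedness_vapor_saturation}) of $b_v$ together with the strict positivity of $\phi$. You merely spell out details the paper leaves implicit (the Dirichlet lift, boundedness of the right-hand side functional, and the observation that the upwind coefficient depends only on previously computed quantities so the problem is genuinely linear), all of which are fine.
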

		
		\begin{proof}
			The coercivity \cref{lem:coercivity_vapor_saturation} and boundedness \cref{lem:boundedness_vapor_saturation} of $a_{s_v}$, together with the fact that $\phi$ and $\rho_v$ are strictly positive, imply, using the Lax--Milgram theorem, that problem \cref{eq:disc_Sv} is well-posed.
		\end{proof}
		
		\section{\textit{A priori} error estimates}
		\label{sec:error_analysis}
		
		In this section, we derive \textit{a priori} error estimates. 
		To do so, we introduce the following quantities:
		\begin{subequations}
			\begin{alignat}{2}
				e_{p_h}^{n} &= P_h^{n} - \pi_{h,\Gamma_\D^{p_\ell}} p_\ell^n, \quad &e_{p_\pi}^{n} &= p_\ell^n - \pi_{h,\Gamma_\D^{p_\ell}} p_\ell^n,
				\\
				e_{a_h}^{n} &= S_{a_h}^{n} - \pi_{h,\Gamma_\D^{s_a}} s_a^n, \quad &e_{a_\pi}^{n} &= s_a^n - \pi_{h,\Gamma_\D^{s_a}} s_a^n,
				\\
				e_{v_h}^{n} &= S_{v_h}^{n} - \pi_{h,\Gamma_\D^{s_v}} s_v^n, \quad &e_{v_\pi}^{n} &= s_v^n - \pi_{h,\Gamma_\D^{s_v}} s_v^n.
			\end{alignat}
		\end{subequations}
		We can then decompose the errors as
		\begin{subequations}
			\begin{alignat}{1}
				p_\ell^n - P_h^{n} &= e_{p_\pi}^{n} - e_{p_h}^{n},
				\label{eq:error_split_pressure}
				\\
				s_a^n - S_{a_h}^{n} &= e_{p_\pi}^{n} - e_{p_h}^{n},
				\label{eq:error_split_saturation}
				\\
				s_a^n - S_{a_h}^{n} &= e_{p_\pi}^{n} - e_{p_h}^{n}.
				\label{eq:error_split_gasfraction}
			\end{alignat}
			\label{eq:error_split}
		\end{subequations}
		We note that, thanks to the definition of the $L^2$-orthogonal projection, the errors above satisfy that
		\begin{equation}
			(e_{p_\pi}^n, w_h) = (e_{a_\pi}^n, w_h) = (e_{v_\pi}^n, w_h) = 0, \quad \forall w_h \in X_h.
			\label{eq:L2_property}
		\end{equation}
		
		We will make use of the following two auxiliary lemmas.
		\begin{lemma}
			For any $0 \leq n \leq N-1$, and any $w_h \in X_h$, we have the following bounds
			\begin{subequations}
				\begin{alignat}{2}
					\abs{\tilde{b}^{n+1}_{p}(p_\ell^{n+1}, w_h) - \tilde{b}^{n}_{p}(p_\ell^{n+1}, w_h)} &\leq C \tau \vertiii{w_h},
					\label{eq:bp_n_bp_nplus1}
					\\
					\abs{\tilde{b}^{n+1}_{a}(s_a^{n+1}, w_h) - \tilde{b}^{n}_{a}(s_a^{n+1}, w_h)} &\leq C \tau \vertiii{w_h},
					\label{eq:ba_n_ba_nplus1}
					\\
					\abs{\tilde{b}^{n+1}_{v}(s_v^{n+1}, w_h) - \tilde{b}^{n}_{v}(s_v^{n+1}, w_h)} &\leq C \tau \vertiii{w_h}.
					\label{eq:bv_n_bv_nplus1}
				\end{alignat}
				\label{eq:b_bounds_time}
			\end{subequations}
			Moreover,
			\begin{subequations}
				\begin{alignat}{2}
					\abs{\tilde{f}^{n+1}_{p}(w_h) - \tilde{f}^{n}_{p}(w_h)} &\leq  C \tau \vertiii{w_h},
					\label{eq:fp_n_fp_nplus1}
					\\
					\abs{\tilde{f}^{n+1}_{a}(w_h) - \tilde{f}^{n}_{a}(w_h)} &\leq  C \tau \vertiii{w_h},
					\label{eq:fa_n_fa_nplus1}
					\\
					\abs{\tilde{f}^{n+1}_{v}(w_h) - \tilde{f}^{n}_{v}(w_h)} &\leq  C \tau \vertiii{w_h},
					\label{eq:fv_n_fv_nplus1}
				\end{alignat}
				\label{eq:f_bounds_time}
			\end{subequations}
		\end{lemma}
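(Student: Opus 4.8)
The plan is to prove all six estimates by a single argument, which I describe in detail for \cref{eq:bp_n_bp_nplus1} and \cref{eq:fp_n_fp_nplus1}; the aqueous and vapor versions follow by routine changes of notation. First I would subtract the two forms and discard every contribution that does not carry a time-lagged nonlinear coefficient. For \cref{eq:bp_n_bp_nplus1} the two bilinear forms have the same first argument $p_\ell^{n+1}\in H^2(\Omega)$, which is single-valued, so $\sbr[0]{p_\ell^{n+1}}=0$ on interior faces and both the penalty term and the $\theta_{p_\ell}$ term of \cref{eq:ah_pressure} drop out, leaving
\begin{multline*}
\tilde{b}^{n+1}_{p}(p_\ell^{n+1},w_h)-\tilde{b}^{n}_{p}(p_\ell^{n+1},w_h)=\sum_{K\in\mathcal{E}_h}\int_K\del[1]{\tilde{\lambda}_t^{n+1}-\tilde{\lambda}_t^{n}}\kappa\,\nabla p_\ell^{n+1}\cdot\nabla w_h\\-\sum_{e\in\Gamma_h}\int_e\del[1]{\cbr[1]{\tilde{\lambda}_t^{n+1}\kappa\nabla p_\ell^{n+1}\cdot\boldsymbol{n}_e}-\cbr[1]{\tilde{\lambda}_t^{n}\kappa\nabla p_\ell^{n+1}\cdot\boldsymbol{n}_e}}\sbr[1]{w_h}.
\end{multline*}
For \cref{eq:fp_n_fp_nplus1} the source term $(q_t^{n+1},w_h)$ and the Neumann term $\int_e j_p^{\text{N}}w_h$ of \cref{eq:fh_pressure} are the same at both levels and cancel, leaving the volume and face contributions built from $\tilde{\lambda}_v^{m}\kappa\nabla\tilde{p}_{c,v}^{m}$, $\tilde{\lambda}_a^{m}\kappa\nabla\tilde{p}_{c,a}^{m}$ and $\kappa(\rho\tilde{\lambda})_t^{m}\boldsymbol{g}$, differenced between $m=n+1$ and $m=n$.

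The crux is that every coefficient increment is $O(\tau)$. For the mobilities I would use \ref{hyp_bounds} (the mobilities are $C^1$ in time along the smooth exact solution, with derivative uniformly bounded on $\Omega$), giving $\norm[1]{\tilde{\lambda}_t^{n+1}-\tilde{\lambda}_t^{n}}\le C\tau$ and likewise for $\tilde{\lambda}_i$ ($i=\ell,v,a$) and $(\rho\tilde{\lambda})_t$; for $\nabla\tilde{p}_{c,a}$ and $\nabla\tilde{p}_{c,v}$ I would combine the Lipschitz bounds of \ref{hyp_pcgradients} with $\norm[1]{s_a^{n+1}-s_a^{n}}+\norm[1]{s_v^{n+1}-s_v^{n}}\le C\tau$, which comes from $s_a,s_v\in C^2(0,T;L^2(\Omega))$, after splitting $\tilde{\lambda}_v^{n+1}\kappa\nabla\tilde{p}_{c,v}^{n+1}-\tilde{\lambda}_v^{n}\kappa\nabla\tilde{p}_{c,v}^{n}=\tilde{\lambda}_v^{n+1}\kappa\del[1]{\nabla\tilde{p}_{c,v}^{n+1}-\nabla\tilde{p}_{c,v}^{n}}+\del[1]{\tilde{\lambda}_v^{n+1}-\tilde{\lambda}_v^{n}}\kappa\nabla\tilde{p}_{c,v}^{n}$ and similarly for the remaining terms. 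The averaging weights in $\cbr[0]{\cdot}$ lie in $[0,1]$ and are Lipschitz functions of the associated coefficients, which \ref{hyp_bounds} keeps bounded away from $0$, so their increments between levels $n+1$ and $n$ are $O(\tau)$ as well; the differenced face expressions then reduce to sums of terms of the shape (weight increment)\,$\times$\,(bounded coefficient)\,$\times$\,(gradient trace) and (bounded weight)\,$\times$\,(coefficient increment)\,$\times$\,(gradient trace).

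It remains to pair these $O(\tau)$ factors with $w_h$, which is standard bookkeeping. For the volume terms, Cauchy--Schwarz together with $\kappa\le\kappa^*$ (\ref{hyp_kappa}) and the uniform-in-$n$ bounds $\norm[1]{\nabla p_\ell^{n+1}}$, $\norm[1]{\nabla s_a^{n+1}}$, $\norm[1]{\nabla s_v^{n+1}}\le C$ (from the regularity assumed at the start of \cref{sec:existence_uniqueness}) give $\le C\tau\norm[1]{\nabla_h w_h}\le C\tau\vertiii{w_h}$. For the face terms I would write $\int_e(\cdot)\sbr[1]{w_h}\le\del[1]{h_e^{1/2}\norm[1]{(\cdot)}_{L^2(e)}}\del[1]{h_e^{-1/2}\norm[1]{\sbr[0]{w_h}}_{L^2(e)}}$, control $h_e^{1/2}\norm[1]{\nabla p_\ell^{n+1}\cdot\boldsymbol{n}_e}_{L^2(e)}$ with the trace inequality after inserting $\pi_{h,\Gamma_\D^{p_\ell}}p_\ell^{n+1}$ (exactly as in the proof of \cref{eq:RT_estimate}), so that the sum over $e$ of its square is $\le C\norm[1]{p_\ell^{n+1}}_{H^2(\Omega)}^2$, and conclude by Cauchy--Schwarz that the face contribution is $\le C\tau\abs[1]{w_h}_{\text{J}}\le C\tau\vertiii{w_h}$. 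Finally, \cref{eq:ba_n_ba_nplus1}, \cref{eq:fa_n_fa_nplus1}, \cref{eq:bv_n_bv_nplus1} and \cref{eq:fv_n_fv_nplus1} follow verbatim, replacing \cref{eq:ah_pressure} and \cref{eq:fh_pressure} by \cref{eq:ah_aqueous_saturation} and \cref{eq:fadef} (resp.\ \cref{eq:ah_vapor_saturation} and the vapor load functional), $\tilde{\lambda}_t\kappa$ by $\kappa\tilde{\lambda}_a(\partial_{s_a}p_{c,a})^{+}$ (resp.\ $\kappa\tilde{\lambda}_v\,\partial_{s_v}p_{c,v}$) and the Darcy flux accordingly; note that $s_a^{n+1},s_v^{n+1}\in H^2(\Omega)$ are continuous, so the upwind selection in $(\cdot)^{\uparrow}$ plays no role in the exact-solution forms and causes no loss. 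I expect the only genuine obstacle to be the organisation of these face-term estimates --- keeping track of the coefficient-dependent averaging weights and of the face-trace norms of the exact solution --- since each individual bound is elementary.
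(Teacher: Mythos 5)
Your proposal is correct and follows exactly the route the paper intends: the paper's own proof is a one-line appeal to ``the Lipschitz continuity of all the coefficients and the smoothness of $p_\ell,\,s_a,\,s_v$,'' and your argument is precisely that idea carried out in detail (coefficient increments of size $O(\tau)$ from \ref{hyp_bounds}--\ref{hyp_pcgradients} and the time regularity, then Cauchy--Schwarz on volume terms and trace estimates with $h_e^{\pm 1/2}$ weights on face terms, noting that the jump, penalty and upwind contributions vanish for the exact solution). No discrepancy with the paper's approach; your write-up is simply more explicit than the published proof.
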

		
		\begin{proof}
			
			The results can be obtained using the Lipschitz continuity of all the coefficients and the smoothness of $p_\ell, \, s_a, \, s_v$.
		\end{proof}
		\begin{lemma}
			For any $0 \leq n \leq N$, and any $w_h \in X_h$, we have the following bounds
			\begin{subequations}
				\begin{alignat}{2}
					\abs{\tilde{b}^{n}_{p}(p_\ell^{n+1}, w_h) - b^{n}_{p}(p_\ell^{n+1}, w_h)} &\leq C \left(h^2 + \Vert S_{a_h}^{n} - s_a^{n} \Vert + 
					\Vert S_{v_h}^{n} - s_v^{n}\Vert\right) \vertiii{w_h},
					\label{eq:bp_bp_tilde}
					\\
					\abs{\tilde{b}^{n}_{a}(s_a^{n+1}, w_h) - b^{n}_{a}(s_a^{n+1}, w_h)} &\leq C \left(h^2 + \Vert S_{a_h}^{n} - s_a^{n} \Vert + \Vert S_{v_h}^{n} - s_v^{n} \Vert\right) \vertiii{w_h},
					\label{eq:ba_ba_tilde}
					\\
					\abs{\tilde{b}^{n}_{v}(s_v^{n+1}, w_h) - b^{n}_{v}(s_v^{n+1}, w_h)} &\leq C \left(h^2 + \Vert S_{a_h}^{n} - s_a^{n} \Vert +
					\Vert S_{v_h}^{n} - s_v^{n}\Vert\right) \vertiii{w_h}.
					\label{eq:bv_bv_tilde}
				\end{alignat}
				\label{eq:b_bounds}
			\end{subequations}
			Moreover,
			\begin{subequations}
				\begin{alignat}{2}
					\abs{\tilde{f}^{n}_{p}(w_h) - f^{n}_{p}(w_h)} &\leq  C \left(h^2 + \Vert S_{a_h}^{n} - s_a^{n} \Vert + 
					\Vert S_{v_h}^{n} - s_v^{n}\Vert\right) \vertiii{w_h},
					\label{eq:fp_fp_tilde}
					\\
					\abs{\tilde{f}^{n}_{a}(w_h) - f^{n}_{a}(w_h)} &\leq C \left(h + \vertiii{e_{p_h}^{n+1}} + \Vert S_{a_h}^{n} - s_a^{n} \Vert + \Vert S_{v_h}^{n} - s_v^{n} \Vert\right) \vertiii{w_h},
					\label{eq:fa_fa_tilde}
					\\
					\abs{\tilde{f}^{n}_{v}(w_h) - f^{n}_{v}(w_h)} &\leq C \left(h + \vertiii{e_{p_h}^{n+1}} + \Vert S_{a_h}^{n} - s_a^{n} \Vert + \Vert S_{v_h}^{n} - s_v^{n} \Vert\right) \vertiii{w_h}.
					\label{eq:fv_fv_tilde}
				\end{alignat}
				\label{eq:f_bounds}
			\end{subequations}
		\end{lemma}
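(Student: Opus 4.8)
The plan is to prove all six bounds by expanding each difference according to the definitions \eqref{eq:ah_pressure}--\eqref{eq:fh_pressure}, \eqref{eq:ah_aqueous_saturation}--\eqref{eq:fadef} and \eqref{eq:ah_vapor_saturation}, noting first that the source terms $(q_i^{n+1},w_h)$ and the Neumann boundary integrals do not depend on the arguments and therefore cancel exactly between the tilde and the discrete forms, and then estimating the remaining coefficient-difference contributions with the Lipschitz and boundedness hypotheses \ref{hyp_bounds}--\ref{hyp_pcgradients} together with the projection and trace estimates recorded in \cref{sec:preliminaries}.

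First I would treat \eqref{eq:bp_bp_tilde}. Since $p_\ell^{n+1}$ is the continuous exact solution, $\sbr{p_\ell^{n+1}}=0$ on every interior face, so the penalty term and the $\theta_{p_\ell}$-term drop out of $\tilde b_p^{n}(p_\ell^{n+1},\cdot)-b_p^{n}(p_\ell^{n+1},\cdot)$, leaving the elementwise term $\sum_K\int_K(\tilde\lambda_t^{n}-\lambda_t^{n})\kappa\nabla p_\ell^{n+1}\cdot\nabla w_h$ and the face term built from $\cbr{\tilde\lambda_t^{n}\kappa\nabla p_\ell^{n+1}\cdot\boldsymbol{n}_e}-\cbr{\lambda_t^{n}\kappa\nabla p_\ell^{n+1}\cdot\boldsymbol{n}_e}$. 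The elementwise part is bounded by the Lipschitz estimate of \ref{hyp_lipschitz} for $\lambda_t$, the regularity $\nabla p_\ell^{n+1}\in L^\infty(\Omega)$ and Cauchy--Schwarz, giving $\le C(\norm{s_a^{n}-S_{a_h}^{n}}+\norm{s_v^{n}-S_{v_h}^{n}})\vertiii{w_h}$. For the face part I would use that $\nabla p_\ell^{n+1}\cdot\boldsymbol{n}_e$ is single-valued on $e$, so the difference of the two weighted averages equals the difference of the corresponding harmonic weights times that value; because $\kappa\lambda_t$ is bounded above and away from zero by \ref{hyp_bounds} and \ref{hyp_kappa}, $\mathcal H$ is Lipschitz on the relevant range and the integrand is bounded pointwise on $e$ by $CL(\abs{s_a^{n}-S_{a_h}^{n}}+\abs{s_v^{n}-S_{v_h}^{n}})\abs{\sbr{w_h}}$. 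Summing over faces with the weight $h_e$ and writing $s_a^{n}-S_{a_h}^{n}=e_{a_\pi}^{n}-e_{a_h}^{n}$, I would control $\sum_e h_e\norm{s_a^{n}-S_{a_h}^{n}}_{L^2(e)}^2$ by combining the face approximation bound \eqref{eq:L2projerrors_face} for $e_{a_\pi}^{n}$ (which produces the $h^2$ term) with the trace inequality \eqref{eq:trace_ineq} for $e_{a_h}^{n}\in X_h$ (which gives $\norm{e_{a_h}^{n}}\le\norm{s_a^{n}-S_{a_h}^{n}}+Ch^2$), and similarly in $s_v$; this yields \eqref{eq:bp_bp_tilde}. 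The bounds \eqref{eq:ba_ba_tilde} and \eqref{eq:bv_bv_tilde} follow by the identical argument, now using $\sbr{s_a^{n+1}}=\sbr{s_v^{n+1}}=0$, the Lipschitz and boundedness of the products $\lambda_a(\partial_{s_a}p_{c,a})^{+}$ and $\lambda_v\,\partial_{s_v}p_{c,v}$ from \ref{hyp_lipschitz}--\ref{hyp_bounds}, and the regularity $s_a^{n+1},s_v^{n+1}\in W^{1,\infty}(\Omega)$.

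For \eqref{eq:fp_fp_tilde} the source and Neumann pieces cancel, and the remaining contributions involve $\lambda_v^{n}\kappa\nabla p_{c,v}^{n}$, $\lambda_a^{n}\kappa\nabla p_{c,a}^{n}$ and $\kappa(\rho\lambda)_t^{n}\boldsymbol{g}$; splitting each difference into an increment of a $\lambda$-factor times a bounded $\nabla p_{c,\cdot}$ or $\boldsymbol{g}$, plus a bounded $\lambda$-factor times an increment of $\nabla p_{c,\cdot}$, and invoking \ref{hyp_lipschitz}, \ref{hyp_bounds} and \ref{hyp_pcgradients}, one bounds the elementwise part by $C(\norm{s_a^{n}-S_{a_h}^{n}}+\norm{s_v^{n}-S_{v_h}^{n}})\vertiii{w_h}$ and, exactly as above, the face part by an extra $Ch^2\vertiii{w_h}$. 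The new ingredient in \eqref{eq:fa_fa_tilde} and \eqref{eq:fv_fv_tilde} is the Darcy velocity: $\tilde f_a^{n}$ carries $\Pi_{\mathrm{RT}}(-\kappa\nabla p_\ell^{n+1})$ while $f_a^{n}$ carries $\boldsymbol{u}_h^{n+1}=\Pi_{\mathrm{RT}}(-\kappa\nabla P_h^{n+1})$. Using \eqref{eq:RT_estimate}, the splitting $P_h^{n+1}-p_\ell^{n+1}=e_{p_h}^{n+1}-e_{p_\pi}^{n+1}$, the bound \eqref{eq:StabNorm_L2projerrors}, and the standard $\mathbb{RT}_0$ approximation estimate for the smooth field $-\kappa\nabla p_\ell^{n+1}$, I would first establish $\norm{\boldsymbol{u}_h^{n+1}-\Pi_{\mathrm{RT}}(-\kappa\nabla p_\ell^{n+1})}\le C\vertiii{e_{p_h}^{n+1}}+Ch$. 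Inserting this into the elementwise term $\int_K(\lambda_a^{n}\boldsymbol{u}_h^{n+1}+\kappa\rho_a\lambda_a^{n}\boldsymbol{g})\cdot\nabla w_h$ (using \ref{hyp_bounds} for $\lambda_a^{n}$, \ref{hyp_lipschitz} for its increment, and the regularity of $p_\ell$ for $\norm{\Pi_{\mathrm{RT}}(-\kappa\nabla p_\ell^{n+1})}$) gives the $h+\vertiii{e_{p_h}^{n+1}}+\norm{s_a^{n}-S_{a_h}^{n}}+\norm{s_v^{n}-S_{v_h}^{n}}$ contributions, and the gravity face term is handled as in \eqref{eq:fp_fp_tilde}; \eqref{eq:fv_fv_tilde} is the same with $a$ replaced by $v$.

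The hard part will be the face terms, on two counts. First, the weights $\omega_1,\omega_2$ in the weighted average $\cbr{\cdot}$ themselves depend on the (discrete versus exact) coefficients and differ by Lipschitz-controlled amounts, so one must write the difference of averages as a weighted average of the pointwise integrand differences plus a difference-of-weights remainder and carry both through the $h_e$-weighted summation. Second, and more delicate, is the upwind face term $-\sum_e\int_e(\lambda_a^{n})^{\uparrow}_{s_a}\boldsymbol{u}_h^{n+1}\cdot\boldsymbol{n}_e\sbr{w_h}$: here one uses that $(\tilde\lambda_a^{n})^{\uparrow}_{s_a}=\tilde\lambda_a^{n}|_e$ since $\tilde\lambda_a^{n}$ is continuous across $e$, so the difference splits into $\bigl((\lambda_a^{n})^{\uparrow}_{s_a}-\tilde\lambda_a^{n}\bigr)\boldsymbol{u}_h^{n+1}\cdot\boldsymbol{n}_e$ plus $\tilde\lambda_a^{n}\bigl(\boldsymbol{u}_h^{n+1}-\Pi_{\mathrm{RT}}(-\kappa\nabla p_\ell^{n+1})\bigr)\cdot\boldsymbol{n}_e$; the first factor is bounded pointwise on $e$ by $CL\max_i(\abs{S_{a_h}^{n}|_{K_i}-s_a^{n}}+\abs{S_{v_h}^{n}|_{K_i}-s_v^{n}})$ and handled by the same trace/$L^2$-projection bookkeeping as before, but this must be paired with the $\mathbb{RT}_0$ normal flux $\boldsymbol{u}_h^{n+1}\cdot\boldsymbol{n}_e$, whose control back to $C+C\vertiii{e_{p_h}^{n+1}}$ relies on the defining relations \eqref{eq:RTproj1}--\eqref{eq:RTproj2} (bounding the trace by the gradient jumps and penalized value jumps of $P_h^{n+1}$) and on \eqref{eq:RT_estimate}, so that the mesh powers balance to reproduce the $h$ and $\vertiii{e_{p_h}^{n+1}}$ in \eqref{eq:fa_fa_tilde}. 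Everything else reduces, through \ref{hyp_bounds}--\ref{hyp_pcgradients}, to the $L^2$-projection bounds and the trace inequality already available.
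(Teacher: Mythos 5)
Your overall strategy coincides with the paper's: cancel the source/Neumann terms, use the Lipschitz and boundedness hypotheses \ref{hyp_bounds}--\ref{hyp_pcgradients} on the coefficient differences, bound volume terms by Cauchy--Schwarz and the $L^\infty$ regularity of the exact solution, bound face terms by the trace inequality \eqref{eq:trace_ineq} together with the face projection estimate \eqref{eq:L2projerrors_face} (which is where the $h^2$ comes from), and control the velocity mismatch through \eqref{eq:RT_estimate}. Your extra care with the coefficient-dependent weights in the weighted average, and your comparison of $\boldsymbol{u}_h^{n+1}$ with $\Pi_{\mathrm{RT}}(-\kappa\nabla p_\ell^{n+1})$ rather than with $-\kappa\nabla p_\ell^{n+1}$ directly, are harmless variants (the latter costs only an extra $O(h)$ Raviart--Thomas interpolation error).

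There is, however, a genuine gap in your treatment of the upwind face term in \eqref{eq:fa_fa_tilde} (and hence \eqref{eq:fv_fv_tilde}). You split it as $\bigl((\lambda_a^{n})^{\uparrow}_{s_a}-\tilde\lambda_a^{n}\bigr)\boldsymbol{u}_h^{n+1}\cdot\boldsymbol{n}_e$ plus $\tilde\lambda_a^{n}\bigl(\boldsymbol{u}_h^{n+1}-\Pi_{\mathrm{RT}}(-\kappa\nabla p_\ell^{n+1})\bigr)\cdot\boldsymbol{n}_e$, i.e.\ you pair the small Lipschitz factor with the \emph{discrete} flux. To close that face integral you need one of the three factors in $L^\infty(e)$: either an $L^\infty$ bound on $\boldsymbol{u}_h^{n+1}\cdot\boldsymbol{n}_e$, which is not available at this stage (the relations \eqref{eq:RTproj1}--\eqref{eq:RTproj2} and \eqref{eq:RT_estimate} only give the $L^2$-type control $C+C\vertiii{e_{p_h}^{n+1}}$ you quote), or the crude pointwise bound $\abs{(\lambda_a^{n})^{\uparrow}_{s_a}-\tilde\lambda_a^{n}}\leq C$, which destroys the smallness and leaves an $O(1)\vertiii{w_h}$ term. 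Pairing the $L^2$-control of the coefficient difference with your $C+C\vertiii{e_{p_h}^{n+1}}$ bound instead produces a product of two error norms, $\Vert S_{a_h}^{n}-s_a^{n}\Vert\,\vertiii{e_{p_h}^{n+1}}$, which is not of the linear form asserted in \eqref{eq:fa_fa_tilde} and cannot be absorbed in the subsequent Young/Gronwall step without an a priori bound on one factor. The paper's split avoids this: put the coefficient difference against the \emph{exact} velocity $\boldsymbol{u}^{n+1}=-\kappa\nabla p_\ell^{n+1}$, which is bounded in $L^\infty$ by the assumed regularity $p_\ell\in L^\infty(0,T;W^{1,\infty}(\Omega))$, and put the bounded upwinded coefficient $(\lambda_a^{n})^{\uparrow}_{s_a}$ against the velocity error $\boldsymbol{u}_h^{n+1}-\boldsymbol{u}^{n+1}$, which is then estimated by \eqref{eq:RT_estimate}, the trace inequality and \eqref{eq:L2projerrors_gradface}. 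With that re-splitting (the same remark applies to the volume term $T_1$), the rest of your argument goes through and matches the paper's proof.
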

		
		\begin{proof}
			We start by showing \cref{eq:bp_bp_tilde}. Note that using the definition of $b_{p}$ \cref{eq:ah_pressure}, we obtain
			\begin{multline}
				\abs{\tilde{b}^{n}_{p}(p_\ell^{n+1}, w_h) - b^{n}_{p}(p_\ell^{n+1}, w_h)}
				\leq \sum_{K \in \mathcal{E}_h}\int_K \abs{\del[1]{\tilde{\lambda}_t^{n} - \lambda_t^{n}} \kappa \nabla p_\ell^{n+1}\cdot \nabla w_h}
				\\
				+ \sum_{e \in \Gamma_h }\int_e \abs{\cbr[1]{\left(\tilde{\lambda}_t^{n} - \lambda_t^{n}\right) \kappa\nabla p_\ell^{n+1}\cdot \boldsymbol{n}_e}\sbr[0]{w_h}}.
				\label{eq:716step}
			\end{multline}
			With the Lipschitz continuity assumptions \ref{hyp_lipschitz}, we can write
			\begin{equation*}
				\abs{\tilde{\lambda}_t^{n} - \lambda_t^{n}}
				\leq C (\vert S_{v_h}^{n} - s_v^{n} \vert + \vert S_{a_h}^{n} - s_a^{n} \vert).
			\end{equation*}
			The first term in the right-hand side of \eqref{eq:716step} is bounded by 
			\begin{align*}
				\sum_{K \in \mathcal{E}_h}\int_K \abs{\left(\tilde{\lambda}_t^{n} - \lambda_t^{n}\right) \kappa \nabla p_\ell^{n+1}\cdot \nabla w_h}
				&\leq C \left(\Vert S_{a_h}^{n} - s_a^{n} \Vert + 
				\Vert S_{v_h}^{n} - s_v^{n}\Vert\right) \vertiii{w_h},
			\end{align*}
			where we have used the Cauchy--Schwarz inequality, boundedness of $\kappa$ and the smoothness of $p_\ell$. For the second term in the right-hand side of \eqref{eq:716step}, we have
			\begin{multline*}
				\sum_{e \in \Gamma_h }\int_e \abs{\cbr[1]{\left(\tilde{\lambda}_t^{n} - \lambda_t^{n}\right) \kappa\nabla p_\ell^{n+1}\cdot \boldsymbol{n}_e}\sbr[1]{w_h}} \leq C \sum_{e \in \Gamma_h }\int_e \cbr[1]{\vert S_{v_h}^{n} - s_v^{n} \vert + \vert S_{a_h}^{n} - s_a^{n} \vert}\abs{\sbr[0]{w_h}} 
				\\
				\leq C \sum_{e \in \Gamma_h }\int_e \cbr[1]{\vert e_{v_h}^n \vert + \vert e_{a_h}^n \vert}\abs{\sbr[0]{w_h}} + C \sum_{e \in \Gamma_h }\int_e \cbr[1]{\vert e_{v_\pi}^n \vert + \vert e_{a_\pi}^n \vert}\abs{\sbr[0]{w_h}}
				\\
				\leq C \sum_{e \in \Gamma_h }h_e^{1/2}\del[1]{\Vert e_{v_h}^n \Vert_{L^2(e)} + \Vert e_{a_h}^n \Vert_{L^2(e)}}h_e^{-1/2}\norm[0]{\sbr[0]{w_h}}_{L^2(e)} + C \sum_{e \in \Gamma_h } h_e^{1/2}\del[1]{\Vert e_{v_\pi}^n \Vert_{L^2(e)} + \Vert e_{a_\pi}^n \Vert_{L^2(e)}}h_e^{-1/2}\norm[0]{\sbr[0]{w_h}}_{L^2(e)}.
			\end{multline*}
			Using the trace inequality \cref{eq:trace_ineq} and the projection estimates \cref{eq:L2projerrors_face}, we have
			\begin{equation*}
				\sum_{e \in \Gamma_h }\int_e \abs{\cbr[1]{\left(\tilde{\lambda}_t^{n} - \lambda_t^{n}\right) \kappa\nabla p_\ell^{n+1}\cdot \boldsymbol{n}_e}\sbr[1]{w_h}} \leq 
				C \del[1]{\Vert e_{v_h}^n \Vert + \Vert e_{a_h}^n \Vert} \vertiii{w_h} + C h^2\vertiii{w_h}.
			\end{equation*}
			Combining these bounds, we obtain
			\[
			\abs{\tilde{b}^{n}_{p}(p_\ell^{n+1}, w_h) - b^{n}_{p}(p_\ell^{n+1}, w_h)}
			\leq C \left(h^2 + \Vert S_{a_h}^n - s_a^n  \Vert + 
			\Vert S_{v_h}^n - s_v^n \Vert\right) \vertiii{w_h}.
			\]
			The proof for \cref{eq:ba_ba_tilde} and \cref{eq:bv_bv_tilde} are analogous to that one of \cref{eq:bp_bp_tilde} and therefore not shown here.
			
			Next, we show \cref{eq:fp_fp_tilde}. Using the definition of $f_{p}$, see \cref{eq:fh_pressure}, we obtain
			\[
			\abs{f_{p}^{n}(w_h) -\tilde{f}_p^{n}(w_h)} = \abs{f_p(w_h;P_h^{n}, S_{a_h}^{n}, S_{v_h}^{n}) - f_p(w_h;p_\ell^{n}, s_{a}^{n}, s_{v}^{n})} \leq \abs{T_1} + \abs{T_2} + \abs{T_3},
			\]
			with
			\begin{align}
				T_1 =& - \sum_{K \in \mathcal{E}_h}\int_K \del[1]{\lambda_v^{n} \kappa\nabla p_{c,v}^{n} - \lambda_a^{n} \kappa\nabla p_{c,a}^{n} - \kappa \del[0]{\rho\lambda}_t^{n}\boldsymbol{g} }\cdot \nabla w_h\nonumber
				\\
				& + \sum_{K \in \mathcal{E}_h}\int_K 	\del[1]{\tilde{\lambda}_v^{n} \kappa \nabla \tilde{p}_{c,v}^{n} - \tilde{\lambda}_a^{n} \kappa\nabla \tilde{p}_{c,a}^{n} - \kappa\del[0]{\tilde{\rho}\tilde{\lambda}}_t^{n}\boldsymbol{g} }\cdot \nabla w_h,
				\\
				T_2 =& \sum_{e \in \Gamma_h }\int_e \cbr[1]{\lambda_v^{n} \kappa \nabla p_{c,v}^{n} \cdot \boldsymbol{n}_e - \tilde{\lambda}_v^{n} \kappa \nabla \tilde{p}_{c,v}^{n} \cdot\boldsymbol{n}_e}\sbr[1]{w_h} \nonumber
				\\
				& - \sum_{e \in \Gamma_h }\int_e \cbr[1]{\lambda_a^{n} \kappa\nabla p_{c,a}^{n} \cdot \boldsymbol{n}_e - \tilde{\lambda}_a^{n} \kappa\nabla \tilde{p}_{c,a}^{n} \cdot \boldsymbol{n}_e}\sbr[1]{w_h},
				\\
				T_3 =& - \sum_{e \in \Gamma_h }\int_e \cbr[1]{\kappa\lambda_t^{n} \boldsymbol{g} \cdot \boldsymbol{n}_e}\sbr[1]{w_h}\nonumber + \sum_{e \in \Gamma_h }\int_e 	\cbr[1]{\kappa\tilde{\lambda}_t^{n} \boldsymbol{g} \cdot \boldsymbol{n}_e}\sbr[1]{w_h}.
			\end{align}
			Using boundedness and growth conditions of $\nabla p_{c,a}$, $\nabla p_{c,v}$ (see \ref{hyp_pcgradients}), boundedness of the rest of the coefficients, and \ref{hyp_lipschitz}, the volume terms $T_1$ are bounded as
			\[
			\abs{T_1} \leq  C \left(\Vert S_{a_h}^{n} - s_a^{n} \Vert + 
			\Vert S_{v_h}^{n} - s_v^{n}\Vert\right) \vertiii{w_h}.
			\]
			With the same assumptions, for the face terms $T_2$ we have
			\begin{multline}
				\abs{T_2} \leq C\sum_{e \in \Gamma_h }\int_e 	\del[1]{\vert S_{a_h}^n - s_a^n\vert + \vert S_{v_h}^n - s_v^n\vert}\abs[0]{\sbr[0]{w_h}} = C\sum_{e \in \Gamma_h }\int_e h_e^{1/2}\del[1]{\vert e_{a_h}^n\vert + \vert e_{v_h}^n\vert}h_e^{-1/2}\abs[0]{\sbr[0]{w_h}} \\
				+ C\sum_{e \in \Gamma_h }\int_e h_e^{1/2}\del[1]{\vert e_{a_\pi}^n\vert + \vert e_{v_\pi}^n\vert}h_e^{-1/2}\abs[0]{\sbr[0]{w_h}}
				\leq C \left(\Vert S_{a_h}^{n} - s_a^{n} \Vert + 
				\Vert S_{v_h}^{n} - s_v^{n}\Vert\right) \vertiii{w_h} + C h^2\vertiii{w_h},
			\end{multline}
			where we have used the trace inequality \cref{eq:trace_ineq}, \cref{eq:error_split} and the projection estimates \cref{eq:L2projerrors_face}.
			Similarly, the terms in $T_3$ are bounded by
			\[
			\abs{T_3} \leq C \left(h^2 + \Vert S_{a_h}^{n} - s_a^{n} \Vert + 
			\Vert S_{v_h}^{n} - s_v^{n}\Vert\right) \vertiii{w_h},
			\]
			which concludes the proof.
			
			To prove \cref{eq:fa_fa_tilde}, note that 
			\[
			\abs{f_a^{n}(w_h) - \tilde{f}_a^{n}(w_h)} = \abs{f_a(w_h;P_h^{n+1},S_{a_h}^{n},S_{v_h}^{n})-f_a(w_h;p_\ell^{n+1},s_a^{n},s_v^{n})}
			\leq \abs{T_1}+\abs{T_2}+\abs{T_3},
			\]
			with
			\begin{align*}
				T_1 =&  \sum_{K \in \mathcal{E}_h}\int_K \del{\lambda_a^{n} \boldsymbol{u}_h^{n+1} + \rho_a \lambda_a^{n} \boldsymbol{g}}\cdot \nabla w_h 
				- \sum_{K \in \mathcal{E}_h}\int_K \del{\tilde{\lambda}_a^{n}\boldsymbol{u}^{n+1} + \rho_a \tilde{\lambda}_a^{n} \boldsymbol{g}}\cdot \nabla w_h,
				\\
				T_2 =& - \sum_{e \in \Gamma_h }\int_e \left( \lambda_a^{n}\right)_{s_a}^\uparrow  \boldsymbol{u}_h^{n+1} \cdot\boldsymbol{n}_e  \sbr[1]{w_h} 
				+ \sum_{e \in \Gamma_h }\int_e  \tilde{\lambda}_a^{n} \boldsymbol{u}^{n+1} \cdot\boldsymbol{n}_e  \sbr[1]{w_h},
				\\
				T_3 =& - \sum_{e \in \Gamma_h }\int_e \cbr{ \rho_a\kappa \lambda_a^{n} \boldsymbol{g}\cdot\boldsymbol{n}_e} \sbr[1]{w_h}
				+ \sum_{e \in \Gamma_h }\int_e \cbr{ \rho_a\kappa \tilde{\lambda}_a^{n} \boldsymbol{g}\cdot\boldsymbol{n}_e} \sbr[1]{w_h},
			\end{align*}
			where we recall that $\boldsymbol{u}_h^{n+1} = \Pi_\mathrm{RT}(-\kappa \nabla P_h^{n+1})$ and
			we denote: $\boldsymbol{u}^{n+1} = -\kappa \nabla p_\ell^{n+1}$.
			Note that we have
			\[
			\abs{T_1} \leq \sum_{K \in \mathcal{E}_h}\int_K \abs{\del{ \lambda_a^{n} \boldsymbol{u}_h^{n+1} - \tilde{\lambda}_a^{n} \boldsymbol{u}^{n+1}}\cdot \nabla w_h }
			+ \sum_{K \in \mathcal{E}_h}\int_K \abs{\del{\rho_a \lambda_a^{n} \boldsymbol{g} - \rho_a \tilde{\lambda}_a^{n} \boldsymbol{g}}\cdot \nabla w_h}.
			\]
			The second term is bounded by:
			\[
			\sum_{K \in \mathcal{E}_h}\int_K \abs{\del{\rho_a \lambda_a^n \boldsymbol{g} - \rho_a \tilde{\lambda}_a^{n} \boldsymbol{g}}\cdot \nabla w_h}
			\leq C\left(\Vert S_{a_h}^n - s_a^n \Vert + \Vert S_{v_h}^n - s_v^n \Vert\right) \vertiii{w_h}.
			\]
			For the first term in $T_1$, we have
			\begin{equation}
				\sum_{K \in \mathcal{E}_h}\int_K \lambda_a^{n} \abs{\del{\boldsymbol{u}_h^{n+1} - \boldsymbol{u}^{n+1}}\cdot \nabla w_h} + \sum_{K \in \mathcal{E}_h}\int_K \abs{( \lambda_a^{n+1} - \tilde{\lambda}_a^{n+1})\boldsymbol{u}^{n+1} \cdot \nabla w_h}.
				\label{eq:aux_T3}
			\end{equation}
			We write
			\[
			\sum_{K \in \mathcal{E}_h}\int_K \abs{\lambda_a^{n} \del{\boldsymbol{u}_h^{n+1} - \boldsymbol{u}^{n+1}}\cdot \nabla w_h}  \leq C \Vert \boldsymbol{u}_h^{n+1} - \boldsymbol{u}^{n+1} \Vert \, \vertiii{w_h}
			\]
			Using triangle inequality, \cref{eq:L2projerrors} and \eqref{eq:RT_estimate}, we have
			\[
			\Vert \boldsymbol{u}_h^{n+1} - \boldsymbol{u}^{n+1} \Vert
			\leq \Vert \boldsymbol{u}_h^{n+1} +\kappa  \nabla_h P_h^{n+1} \Vert
			+ \Vert \kappa \nabla_h (P_h^{n+1} - p_\ell^{n+1}) \Vert
			\leq C \vertiii{e_{p_h}^{n+1}} + C h.
			\]
			For the second term in \cref{eq:aux_T3}, we have
			\[
			\sum_{K \in \mathcal{E}_h}\int_K \abs{(\lambda_a^{n} - \tilde{\lambda}_a^{n}) \kappa \nabla p_\ell^{n+1}  \cdot \nabla w_h}
			\leq C\left(\Vert S_{a_h}^n - s_a^n \Vert + \Vert S_{v_h}^n - s_v^n \Vert\right) \vertiii{w_h},
			\]
			where we have used that $p_\ell \in C^0(0,T; H^{3}(\Omega))$.
			So combining the bounds above, we obtain
			\[
			\abs{T_1} \leq C \left(h + \vertiii{e_{p_h}^{n+1}} + \Vert S_{a_h}^n - s_a^n \Vert + \Vert S_{v_h}^n - s_v^n \Vert\right)\vertiii{w_h}.
			\]
			The term $T_2$ can be written as
			\begin{equation}
				\abs{T_2} \leq \sum_{e \in \Gamma_h }\int_e \abs{\left( \lambda_a^{n}\right)_{s_a}^\uparrow  (\boldsymbol{u}_h^{n+1} - \bfu^{n+1}) \cdot\boldsymbol{n}_e  \sbr[0]{w_h}}
				+ \sum_{e \in \Gamma_h }\int_e \abs{\left(\left( \lambda_a^{n}\right)_{s_a}^\uparrow 
					- \tilde{\lambda}_a^{n} \right)\bfu^{n+1} \cdot\boldsymbol{n}_e  \sbr[0]{w_h}}.
				\label{eq:aux_T5}
			\end{equation}
			The first term in \cref{eq:aux_T5} is bounded by
			\[
			\sum_{e \in \Gamma_h }\int_e \abs{\left( \lambda_a^{n+1} \right)_{s_a}^\uparrow  (\boldsymbol{u}_h^{n+1} - \bfu^{n+1}) \cdot\boldsymbol{n}_e  \sbr[0]{w_h} }
			\leq C \vertiii{w_h} \left(\sum_{e\in\Gamma_h} h_e \Vert \bfu_h^{n+1} - \bfu^{n+1}\Vert_{L^2(e)}^2\right)^{1/2}.
			\]
			We fix a face $e$ and we choose one neighboring element $K_e$ such that $e\subset \partial K_e$. 
			\[
			\Vert \bfu_h^{n+1} -\bfu^{n+1} \Vert_{L^2(e)} \leq \Vert \bfu_h^{n+1} + \kappa \nabla P_h^{n+1}|_{K_e}\Vert_{L^2(e)}
			+ \Vert \kappa \nabla (P_h^{n+1}|_{K_e} - p_\ell^{n+1})\Vert_{L^2(e)}.
			\]
			Using the trace inequality \cref{eq:trace_ineq} and the Raviart--Thomas projection estimate \eqref{eq:RT_estimate}, we have
			\[
			\left(\sum_{e\in\Gamma_h} h_e \Vert \bfu_h^{n+1} + \kappa \nabla P_h^{n+1}|_{K_e}\Vert_{L^2(e)}^2 \right)^{1/2}
			\leq \Vert \bfu_h^{n+1} + \kappa \nabla_h P_h^{n+1}\Vert \leq C \vertiii{e_{p_h}^{n+1}} + Ch.
			\]
			Adding and subtracting $\pi_{h,\Gamma_\D^{p_\ell}} p_\ell^{n+1}$ and using \cref{eq:L2projerrors_gradface} and the trace inequality \cref{eq:trace_ineq}, yields
			\[
			\left(\sum_{e\in\Gamma_h} h_e \Vert \nabla (P_h^{n+1}|_{K_e}-p_\ell^{n+1})\Vert_{L^2(e)}^2\right)^{1/2}
			\leq C \vertiii{e_{p_h}^{n+1}} + C h.
			\]
			Therefore, the first term in \cref{eq:aux_T5} is bounded as
			\begin{eqnarray*}
				\sum_{e \in \Gamma_h }\int_e \abs{\left( \lambda_a^{n} \right)_{s_a}^\uparrow  (\boldsymbol{u}_h^{n+1} - \bfu^{n+1}) \cdot\boldsymbol{n}_e  \sbr[0]{w_h} }
				\leq & 
				C (h+\vertiii{e_{p_h}^{n+1}})\vertiii{w_h}.
			\end{eqnarray*}
			The second term in \cref{eq:aux_T5} can be bounded as:
			\begin{equation*}
				\sum_{e \in \Gamma_h }\int_e \abs{\left(\left(\lambda_a^{n}\right)_{s_a}^\uparrow 
					- \tilde{\lambda}_a^{n} \right)\bfu^{n+1} \cdot\boldsymbol{n}_e  \sbr[0]{w_h}}
				\leq C \left(h^2 + \Vert S_{a_h}^n - s_a^n \Vert + \Vert S_{v_h}^n - s_v^n \Vert\right) \, \vertiii{w_h}.
			\end{equation*}
			Therefore combining the bounds above, and using that $h\leq 1$ so that $h^2 \leq h$, we have
			\[
			\abs{T_2} \leq 
			C \left(h + \vertiii{e_{p_h}^{n+1}}
			+  \Vert S_{a_h}^n - s_a^n \Vert + \Vert S_{v_h}^n - s_v^n \Vert\right)\vertiii{w_h}.
			\]
			Finally, the term $T_3$ is bounded by
			\[
			\abs{T_3}
			\leq C\left(h^2 + \Vert S_{a_h}^n - s_a^n \Vert + \Vert S_{v_h}^n - s_v^n \Vert\right) \, \vertiii{w_h}.
			\]
			Combining all the bounds above gives the result.
			
			The proof for \cref{eq:fv_fv_tilde} is analogous to that of \cref{eq:fa_fa_tilde}.
		\end{proof}
		
		\subsection{Liquid pressure}

		The following lemma gives an equation for the error $e_{p_h}^{n}$.
		
		\begin{lemma}[Error equation for the liquid pressure]
			We have that, for all $w_h \in X_{h,\Gamma_\D^{p_\ell}}$, and all $0 \leq n \leq N-1$, there exists a constant $C>0$ independent of $h$ and $\tau$ such that
			\begin{equation}
				b_{p}^{n}(e_{p_h}^{n+1},w_h)
				\leq
				b_{p}^{n}(e_{p_\pi}^{n+1},w_h) 
				+ C \left(\tau + h^2 + \Vert S_{a_h}^{n} - s_a^n \Vert + \Vert S_{v_h}^{n} - s_v^n \Vert \right) \vertiii{w_h}.
				\label{eq:error_eq_pressure_approach1}
			\end{equation}
			\label{lem:error_eq_pressure}
		\end{lemma}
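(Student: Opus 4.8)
The plan is to combine the bilinearity of $b_p^n$ in its first two arguments, the discrete equation \eqref{eq:disc_pl}, the consistency identity of \cref{lem:consistency_pressure}, and the auxiliary perturbation bounds \eqref{eq:bp_n_bp_nplus1}, \eqref{eq:fp_n_fp_nplus1}, \eqref{eq:bp_bp_tilde}, \eqref{eq:fp_fp_tilde}. Since $e_{p_h}^{n+1} - e_{p_\pi}^{n+1} = P_h^{n+1} - p_\ell^{n+1}$, bilinearity gives
\[
b_p^n(e_{p_h}^{n+1},w_h) = b_p^n(e_{p_\pi}^{n+1},w_h) + b_p^n(P_h^{n+1} - p_\ell^{n+1},w_h),
\]
so it suffices to bound the last term by $C\bigl(\tau + h^2 + \Vert S_{a_h}^{n} - s_a^n \Vert + \Vert S_{v_h}^{n} - s_v^n \Vert\bigr)\vertiii{w_h}$. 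Using the discrete problem \eqref{eq:disc_pl} with test function $w_h \in X_{h,\Gamma_\D^{p_\ell}}$, this term equals $f_p^n(w_h) - b_p^n(p_\ell^{n+1},w_h)$.

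Next I would insert the exact-solution forms, keeping in mind that the superscript on $b_p$ and $f_p$ refers to the time level at which the nonlinear coefficients are frozen, not to the argument $p_\ell^{n+1}$. Writing
\[
b_p^n(p_\ell^{n+1},w_h) = \tilde{b}_p^{n+1}(p_\ell^{n+1},w_h) + \bigl(\tilde{b}_p^{n}(p_\ell^{n+1},w_h) - \tilde{b}_p^{n+1}(p_\ell^{n+1},w_h)\bigr) + \bigl(b_p^{n}(p_\ell^{n+1},w_h) - \tilde{b}_p^{n}(p_\ell^{n+1},w_h)\bigr),
\]
and the analogous three-term splitting of $f_p^n(w_h)$ around $\tilde{f}_p^{n+1}(w_h)$, the leading terms cancel because $\tilde{f}_p^{n+1}(w_h) - \tilde{b}_p^{n+1}(p_\ell^{n+1},w_h) = 0$ by \cref{lem:consistency_pressure}. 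The two time-increment differences are each bounded by $C\tau\vertiii{w_h}$ via \eqref{eq:bp_n_bp_nplus1} and \eqref{eq:fp_n_fp_nplus1}, and the two coefficient-perturbation differences are each bounded by $C\bigl(h^2 + \Vert S_{a_h}^{n} - s_a^n \Vert + \Vert S_{v_h}^{n} - s_v^n \Vert\bigr)\vertiii{w_h}$ via \eqref{eq:bp_bp_tilde} and \eqref{eq:fp_fp_tilde}. Summing these remainder contributions yields the desired bound on $f_p^n(w_h) - b_p^n(p_\ell^{n+1},w_h)$, and hence \eqref{eq:error_eq_pressure_approach1}.

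The argument involves no genuinely hard estimate — everything reduces to the lemmas already in hand — so the only point requiring care is the bookkeeping of which time level each form is evaluated at, together with the observation that the common test space $X_{h,\Gamma_\D^{p_\ell}}$ makes both the discrete equation and the consistency identity available simultaneously. In the final write-up I would organize the remainder terms into a single triangle-inequality chain running from $b_p^n(P_h^{n+1} - p_\ell^{n+1},w_h)$ down to $0$, which is the most transparent way to present the cancellation.
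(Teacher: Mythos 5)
Your proposal is correct and follows essentially the same route as the paper: both use the error split $P_h^{n+1}-p_\ell^{n+1}=e_{p_h}^{n+1}-e_{p_\pi}^{n+1}$ together with the discrete equation \eqref{eq:disc_pl}, the consistency identity of \cref{lem:consistency_pressure}, and the four perturbation bounds \eqref{eq:bp_n_bp_nplus1}, \eqref{eq:fp_n_fp_nplus1}, \eqref{eq:bp_bp_tilde}, \eqref{eq:fp_fp_tilde}, with only the bookkeeping organized slightly differently (a single triangle-inequality chain versus the paper's sequence of rewritten identities).
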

		
		\begin{proof}
			Subtracting the consistency of the scheme \cref{eq:consistency_pressure} from the discretization \cref{eq:disc_pl}, we have
			\begin{equation}
				b_{p}^{n}(P_{h}^{n+1},w_h) - \tilde{b}^{n+1}_{p}(p_\ell^{n+1}, w_h) = f_{p}^{n}(w_h) - \tilde{f}^{n+1}_{p}(w_h).
			\end{equation}
			This is equivalent to
			\begin{equation}
				b_{p}^{n}(P_{h}^{n+1},w_h) - \tilde{b}^{n}_{p}(p_\ell^{n+1}, w_h) = f_{p}^{n}(w_h) - \tilde{f}^{n}_{p}(w_h) + \tilde{f}^{n}_{p}(w_h) - \tilde{f}^{n+1}_{p}(w_h) - \tilde{b}^{n}_{p}(p_\ell^{n+1}, w_h) + \tilde{b}^{n+1}_{p}(p_\ell^{n+1}, w_h).
			\end{equation}
			Thanks to \cref{eq:bp_n_bp_nplus1} and \cref{eq:fp_n_fp_nplus1}, we have
			\begin{equation}
				b_{p}^{n}(P_{h}^{n+1},w_h) - \tilde{b}^{n}_{p}(p_\ell^{n+1}, w_h) = f_{p}^{n}(w_h) - \tilde{f}^{n}_{p}(w_h) + C\tau \vertiii{w_h}.
			\end{equation}
			This is also equivalent to
			\begin{equation}
				b_{p}^{n}(P_{h}^{n+1},w_h) - b^{n}_{p}(p_\ell^{n+1}, w_h) = f_{p}^{n}(w_h) - \tilde{f}^{n}_{p}(w_h) + C\tau \vertiii{w_h} + \tilde{b}^{n}_{p}(p_\ell^{n+1}, w_h) - b^{n}_{p}(p_\ell^{n+1}, w_h).
			\end{equation}
			Owing to \cref{eq:bp_bp_tilde}, \cref{eq:fp_fp_tilde}, this is equivalent to
			\begin{equation}
				b_{p}^{n}(P_{h}^{n+1} - p_\ell^{n+1},w_h) \leq C \left(\tau + h^2 + \Vert S_{a_h}^{n} - s_a^n \Vert + \Vert S_{v_h}^{n} - s_v^n \Vert \right) \vertiii{w_h}.
				\label{eq:estimates3_1}
			\end{equation}
			The result is obtained by using \cref{eq:error_split_pressure}.
		\end{proof}
		
		\begin{lemma}[Error estimates for the liquid pressure]
			We have that, for all $0 \leq n \leq N-1$,
			\begin{equation}			
				\tilde{C} \tau \vertiii{e_{p_h}^{n+1}}^2
				\leq C\tau(\tau^2 + h^2)
				+ C \tau \left(\Vert S_{a_h}^{n} - s_a^n \Vert^2 + \Vert S_{v_h}^{n} - s_v^n \Vert^2 \right),
				\label{eq:error_estimate_pressure}
			\end{equation}
			where $C, \tilde{C} > 0$ are independent of $h$ and $\tau$.
			\label{lem:error_estimate_pressure}
		\end{lemma}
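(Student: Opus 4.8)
The plan is to test the error equation \eqref{eq:error_eq_pressure_approach1} against the discrete error itself, $w_h = e_{p_h}^{n+1}$, and then to trade the bilinear form on the left for a norm via coercivity and the projection term on the right for a power of $h$ via continuity. Since $e_{p_h}^{n+1} = P_h^{n+1} - \pi_{h,\Gamma_\D^{p_\ell}}p_\ell^{n+1}$ vanishes on $\Gamma_\D^{p_\ell}$ (both $P_h^{n+1}$ and the projection match the Dirichlet data there), it is a legitimate test function, and the coercivity bound \eqref{eq:coercivity_pressure} of \cref{lem:coercivity_pressure} gives
\[
C_{\alpha,p_\ell}\vertiii{e_{p_h}^{n+1}}^2 \leq b_{p}^{n}(e_{p_h}^{n+1},e_{p_h}^{n+1}) \leq b_{p}^{n}(e_{p_\pi}^{n+1},e_{p_h}^{n+1}) + C\bigl(\tau + h^2 + \Vert S_{a_h}^{n} - s_a^n \Vert + \Vert S_{v_h}^{n} - s_v^n \Vert\bigr)\vertiii{e_{p_h}^{n+1}}.
\]

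Next I would estimate the projection term. Because $e_{p_\pi}^{n+1} = p_\ell^{n+1} - \pi_{h,\Gamma_\D^{p_\ell}}p_\ell^{n+1} \in H^2(\Omega) + X_{h,\Gamma_\D^{p_\ell}}$, the correct tool is the continuous boundedness bound \eqref{eq:boundedness_continuous_pressure} of \cref{lem:boundedness_pressure}, which controls $b_p^n(\cdot,\cdot)$ by the stronger norm $\vertiii{\cdot}_\ast$ — this is essential here, since the broken gradient of $e_{p_\pi}^{n+1}$ has nonvanishing normal jumps across interior faces, so the plain $\vertiii{\cdot}$-continuity \eqref{eq:boundedness_pressure} would not apply. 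Thus $\abs{b_p^n(e_{p_\pi}^{n+1},e_{p_h}^{n+1})} \leq C_{\text{B}_\ast,p_\ell}\vertiii{e_{p_\pi}^{n+1}}_\ast\vertiii{e_{p_h}^{n+1}}$, and the projection estimate \eqref{eq:StabNorm_L2projerrors} with $s=2$, combined with the regularity assumption $p_\ell \in C^0(0,T;H^2(\Omega))$, yields $\vertiii{e_{p_\pi}^{n+1}}_\ast \leq C h \Vert p_\ell^{n+1}\Vert_{H^2(\Omega)} \leq C h$. Substituting, and using $h \leq 1$ so $h^2 \leq h$, leaves
\[
C_{\alpha,p_\ell}\vertiii{e_{p_h}^{n+1}}^2 \leq C\bigl(\tau + h + \Vert S_{a_h}^{n} - s_a^n \Vert + \Vert S_{v_h}^{n} - s_v^n \Vert\bigr)\vertiii{e_{p_h}^{n+1}}.
\]

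To finish, I would apply Young's inequality $ab \leq \tfrac{C_{\alpha,p_\ell}}{2}a^2 + \tfrac{1}{2C_{\alpha,p_\ell}}b^2$ with $a = \vertiii{e_{p_h}^{n+1}}$ to absorb the extra factor of $\vertiii{e_{p_h}^{n+1}}$ into the left-hand side, then bound $(x+y+z+w)^2 \leq 4(x^2+y^2+z^2+w^2)$ to obtain $\tfrac{C_{\alpha,p_\ell}}{2}\vertiii{e_{p_h}^{n+1}}^2 \leq C(\tau^2 + h^2 + \Vert S_{a_h}^{n} - s_a^n\Vert^2 + \Vert S_{v_h}^{n} - s_v^n\Vert^2)$; multiplying through by $\tau$ and setting $\tilde C = C_{\alpha,p_\ell}/2$ gives exactly \eqref{eq:error_estimate_pressure}. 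There is no serious obstacle: the whole argument is the standard coercivity--plus--projection-error scheme, and the only points that require attention are using the $\vertiii{\cdot}_\ast$-continuity for the projection term rather than the weaker bound, and carrying the factor $\tau$ unchanged so that the per-step estimate can later be summed over $n = 0,\dots,N-1$ in the global error analysis.
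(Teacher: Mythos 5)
Your proposal is correct and follows essentially the same route as the paper's proof: test the error equation with $e_{p_h}^{n+1}$ (the paper uses $\tau e_{p_h}^{n+1}$, which is only a cosmetic difference), apply coercivity \eqref{eq:coercivity_pressure} on the left, the $\vertiii{\cdot}_\ast$-boundedness \eqref{eq:boundedness_continuous_pressure} together with the projection estimate \eqref{eq:StabNorm_L2projerrors} on the projection term, and finish with Young's inequality and $h^2\leq h$ (the paper absorbs the same powers via $h^4\leq h^2$). Your remark on why the $\ast$-norm continuity is needed for $e_{p_\pi}^{n+1}$ matches the paper's reasoning exactly.
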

		
		\begin{proof}
			Letting $w_h = \tau e_{p_h}^{n+1}$ in \cref{eq:error_eq_pressure_approach1}, it reads
			\begin{equation}
				\tau b_{p}^{n}(e_{p_h}^{n+1},e_{p_h}^{n+1})
				\leq
				\tau b_{p}^{n}(e_{p_\pi}^{n+1},e_{p_h}^{n+1}) 
				+ C \tau \left(\tau + h^2 + \Vert S_{a_h}^{n} - s_a^n \Vert + \Vert S_{v_h}^{n} - s_v^n \Vert \right) \vertiii{e_{p_h}^{n+1}}.
			\end{equation}
			Applying coercivity \cref{eq:coercivity_pressure} and boundedness \cref{eq:boundedness_continuous_pressure} of $b_{p}^n$, we obtain
			\begin{equation}
				C_{\alpha,p_\ell} \tau \vertiii{e_{p_h}^{n+1}}^2
				\leq
				C_{\mathrm{B}_\ast,p_\ell} \tau \vertiii{e_{p_\pi}^{n+1}}_\ast \vertiii{e_{p_h}^{n+1}}
				+ C \tau \left(\tau + h^2 + \Vert S_{a_h}^{n} - s_a^n \Vert + \Vert S_{v_h}^{n} - s_v^n \Vert \right) \vertiii{e_{p_h}^{n+1}}.
				\label{eq:pl_est_1}
			\end{equation}
			Using Young's inequality on the right hand side, we have
			\begin{equation}
				C \tau \vertiii{e_{p_h}^{n+1}}^2
				\leq
				C \tau \vertiii{e_{p_\pi}^{n+1}}_\ast^2 
				+ C \tau \left(\tau^2 + h^4 + \Vert S_{a_h}^{n} - s_a^n \Vert^2 + \Vert S_{v_h}^{n} - s_v^n \Vert^2 \right).
				\label{eq:pl_est_2}
			\end{equation}
			The result is obtained after using the $L^2$-orthogonal projection estimates \cref{eq:StabNorm_L2projerrors} and recalling that $h^4 \leq h^2$.
		\end{proof}
		
		\subsection{Aqueous saturation}
		The following lemma gives an equation for the error $e_{a_h}^{n}$.
		
		\begin{lemma}[Error equation for the aqueous saturation]
			We have that, for all $w_h \in X_{h,\Gamma_\D^{s_a}}$, and all $0 \leq n \leq N-1$, there exists a constant $C>0$ independent of $h$ and $\tau$ such that
			\begin{multline}
				\dfrac{1}{\tau} (\phi (S_{a_h}^{n+1} - s_{a}^{n+1}), w_h) 
				+ b_{a}^{n}(e_{a_h}^{n+1}, w_h) 
				= \dfrac{1}{\tau} (\phi (S_{a_h}^n - s_{a}^n), w_h) + b_{a}^{n}(e_{a_\pi}^{n+1}, w_h)
				\\
				+ C \left(\tau + h + \vertiii{e_{p_h}^{n+1}} + \Vert S_{a_h}^{n} - s_a^{n} \Vert + \Vert S_{v_h}^{n} - s_v^{n} \Vert\right) \vertiii{w_h} + \sigma_a(w_h),
				\label{eq:error_eq_aqueous_saturation}
			\end{multline}
			where
			\begin{equation}
				\sigma_a(w_h) = \frac{1}{\tau} (\beta_a^{n+1},w_h), \quad \beta_a^{n+1} = \phi \int_{t_n}^{t_{n+1}}(t-t_n)\partial_{tt}s_a \dif t.
				\label{eq:beta_def}
			\end{equation}
			\label{lem:error_eq_aqueous_saturation}
		\end{lemma}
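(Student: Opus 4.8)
The plan is to obtain \eqref{eq:error_eq_aqueous_saturation} by subtracting a rewritten form of the consistency identity \eqref{eq:consistency_saturation} from the discrete scheme \eqref{eq:disc_Sa}, and then to replace every coefficient mismatch by the auxiliary bounds already at our disposal.

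First I would deal with the time truncation. By a Taylor expansion with integral remainder around $t_{n+1}$,
\[
\tau\,\partial_t s_a^{n+1} = s_a^{n+1} - s_a^n + \int_{t_n}^{t_{n+1}}(t-t_n)\,\partial_{tt}s_a\,\mathrm{d}t,
\]
so that, recalling the definitions in \eqref{eq:beta_def} and that $\phi$ is constant, $(\phi\,(\partial_t s_a)^{n+1},w_h) = \tfrac1\tau(\phi(s_a^{n+1}-s_a^n),w_h) + \sigma_a(w_h)$. Substituting this into \eqref{eq:consistency_saturation} and subtracting the resulting identity from \eqref{eq:disc_Sa} gives
\[
\tfrac1\tau(\phi(S_{a_h}^{n+1}-s_a^{n+1}),w_h) + b_a^{n}(S_{a_h}^{n+1},w_h) - \tilde b_a^{n+1}(s_a^{n+1},w_h) = \tfrac1\tau(\phi(S_{a_h}^{n}-s_a^{n}),w_h) + f_a^{n}(w_h) - \tilde f_a^{n+1}(w_h) + \sigma_a(w_h).
\]

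Next I would split the two coefficient mismatches into a discretization part and a time-lag part. Using the bilinearity of $b_a^n$ together with $S_{a_h}^{n+1}-s_a^{n+1} = e_{a_h}^{n+1}-e_{a_\pi}^{n+1}$, I write
\[
b_a^{n}(S_{a_h}^{n+1},w_h) - \tilde b_a^{n+1}(s_a^{n+1},w_h) = b_a^{n}(e_{a_h}^{n+1},w_h) - b_a^{n}(e_{a_\pi}^{n+1},w_h) + \bigl(b_a^{n}(s_a^{n+1},w_h)-\tilde b_a^{n}(s_a^{n+1},w_h)\bigr) + \bigl(\tilde b_a^{n}(s_a^{n+1},w_h)-\tilde b_a^{n+1}(s_a^{n+1},w_h)\bigr),
\]
and similarly $f_a^{n}(w_h)-\tilde f_a^{n+1}(w_h) = \bigl(f_a^{n}(w_h)-\tilde f_a^{n}(w_h)\bigr) + \bigl(\tilde f_a^{n}(w_h)-\tilde f_a^{n+1}(w_h)\bigr)$. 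The time-lag terms are bounded by \eqref{eq:ba_n_ba_nplus1} and \eqref{eq:fa_n_fa_nplus1} (each $\leq C\tau\vertiii{w_h}$); the remaining ones are bounded by \eqref{eq:ba_ba_tilde} and \eqref{eq:fa_fa_tilde}, the latter bringing in the term $\vertiii{e_{p_h}^{n+1}}$ through the Raviart--Thomas estimate \eqref{eq:RT_estimate}. Moving $b_a^{n}(e_{a_h}^{n+1},w_h)$ to the left, $b_a^{n}(e_{a_\pi}^{n+1},w_h)$ to the right, and collecting the four bounds (using $h^2\leq h$) yields exactly the stated factor $C(\tau+h+\vertiii{e_{p_h}^{n+1}}+\Vert S_{a_h}^{n}-s_a^{n}\Vert+\Vert S_{v_h}^{n}-s_v^{n}\Vert)\vertiii{w_h}$, which is \eqref{eq:error_eq_aqueous_saturation}; here, as usual in such identities, ``$=\dots+C(\dots)\vertiii{w_h}$'' is shorthand for equality up to a quantity bounded in absolute value by $C(\dots)\vertiii{w_h}$.

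I do not expect a real obstacle: the genuinely technical work is packaged into the two auxiliary lemmas preceding this one, and what remains is careful bookkeeping. The one point deserving attention is that the exact solution satisfies consistency at time $t_{n+1}$ --- hence the appearance of $\tilde b_a^{n+1}$, $\tilde f_a^{n+1}$ and $\partial_t s_a^{n+1}$ --- while the scheme freezes all nonlinear coefficients at $t_n$; the telescoping $\tilde b_a^{n+1}\to\tilde b_a^{n}\to b_a^{n}$, and likewise for $f_a$, is precisely what reconciles the two, and it is the last passage, $\tilde f_a^{n}\to f_a^{n}$, that accounts for the presence of $\vertiii{e_{p_h}^{n+1}}$ on the right-hand side.
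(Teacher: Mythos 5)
Your proposal is correct and follows essentially the same route as the paper: a Taylor expansion with integral remainder to produce $\sigma_a$, substitution into the consistency identity \eqref{eq:consistency_saturation}, subtraction from the scheme \eqref{eq:disc_Sa}, and then the auxiliary bounds \eqref{eq:ba_n_ba_nplus1}, \eqref{eq:fa_n_fa_nplus1}, \eqref{eq:ba_ba_tilde}, \eqref{eq:fa_fa_tilde} together with the splitting $S_{a_h}^{n+1}-s_a^{n+1}=e_{a_h}^{n+1}-e_{a_\pi}^{n+1}$. If anything, your explicit telescoping $\tilde b_a^{n+1}\to\tilde b_a^{n}\to b_a^{n}$ (and likewise for $f_a$) is slightly more careful than the paper's write-up, which cites only \eqref{eq:ba_ba_tilde} and \eqref{eq:fa_fa_tilde} at that step even though the $\tau$ contribution comes from the time-lag lemma.
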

		
		\begin{proof}
			Using a Taylor series expansion, we have
			\begin{equation}
				s_a^n = s_a^{n+1} - \tau \del[0]{\partial_t s_a}^{n+1} + \int_{t_n}^{t_{n+1}}\del[0]{t-t_n}\partial_{tt} s_a \dif t,
			\end{equation}
			Rearranging the terms, multiplying by $\phi$ and a test function $w_h \in X_h$, and integrating over $\Omega$ yields
			\begin{equation}
				\dfrac{1}{\tau}(\phi s_a^{n+1}, w_h) = \dfrac{1}{\tau} (\phi s_a^n, w_h) + (\phi(\partial_t s_a)^{n+1}, w_h) - \sigma_a(w_h),
			\end{equation}
			where $\sigma_a(w_h)$ is defined in \cref{eq:beta_def}.  Using the consistency of the scheme \cref{eq:consistency_saturation} on the second term on the right-hand side, rearranging terms, and subtracting the result from \cref{eq:disc_Sa} reads
			\begin{multline}
				\dfrac{1}{\tau} (\phi (S_{a_h}^{n+1} - s_{a}^{n+1}), w_h) 
				+ b_{a}^{n}(S_{a_h}^{n+1}, w_h)
				- \tilde{b}_{a}^{n+1}(s_a^{n+1}, w_h) 
				= \dfrac{1}{\tau} (\phi (S_{a_h}^n - s_{a}^n), w_h) 
				\\
				+ f_{a}^{n}(w_h) - \tilde{f}_{a}^{n+1}(w_h) + \sigma_a(w_h).
				\label{eq:sa_aux1}
			\end{multline}
			Owing to \cref{eq:ba_ba_tilde} and \cref{eq:fa_fa_tilde}, this is equivalent to
			\begin{multline}
				\dfrac{1}{\tau} (\phi (S_{a_h}^{n+1} - s_{a}^{n+1}), w_h) 
				+ b_{a}^{n}(S_{a_h}^{n+1} - s_a^{n+1}, w_h) 
				= \dfrac{1}{\tau} (\phi (S_{a_h}^n - s_{a}^n), w_h) 
				\\
				+ C \left(\tau + h + \vertiii{e_{p_h}^{n+1}} + \Vert S_{a_h}^{n} - s_a^{n} \Vert + \Vert S_{v_h}^{n} - s_v^{n} \Vert\right) \vertiii{w_h} + \sigma_a(w_h).
				\label{eq:sa_aux2}
			\end{multline}
			Using \cref{eq:error_split} gives the result.
		\end{proof}
		
		\begin{lemma}[Error estimates for the aqueous saturation]
			We have that, for all $0 \leq n \leq N-1$,
			\begin{multline}
				\norm[1]{S_{a_h}^{n+1} - s_a^{n+1}}^2 
				+ \tilde{C} \tau \vertiii{e_{a_h}^{n+1}}^2
				\leq (1 + C\tau)\norm[1]{S_{a_h}^{n} - s_a^{n}}^2
				+ C \tau \norm[1]{S_{v_h}^{n} - s_v^{n}}^2 + C\tau(\tau^2 + h^2),
				\label{eq:error_estimate_aqueous_saturation}
			\end{multline}
			where $C, \tilde{C} > 0$ are independent of $h$ and $\tau$.
			\label{lem:error_estimate_aqueous_saturation}
		\end{lemma}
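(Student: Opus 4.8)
The plan is to run the standard energy argument on the error equation \eqref{eq:error_eq_aqueous_saturation}, tested against $w_h = \tau e_{a_h}^{n+1} \in X_{h,\Gamma_\D^{s_a}}$. For the mass term on the left-hand side I would use the splitting \eqref{eq:error_split} to write $S_{a_h}^{n+1} - s_a^{n+1} = e_{a_h}^{n+1} - e_{a_\pi}^{n+1}$; since $e_{a_h}^{n+1} \in X_h$, the orthogonality \eqref{eq:L2_property} annihilates the $e_{a_\pi}^{n+1}$ part and, using that $\phi$ is constant, yields $(\phi(S_{a_h}^{n+1}-s_a^{n+1}),e_{a_h}^{n+1}) = \phi\norm{e_{a_h}^{n+1}}^2$. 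The same computation turns $\tfrac1\tau(\phi(S_{a_h}^n - s_a^n),w_h)$ into $\phi(e_{a_h}^n,e_{a_h}^{n+1})$, which I would split with Young's inequality so that $\tfrac\phi2\norm{e_{a_h}^{n+1}}^2$ is absorbed on the left while $\tfrac\phi2\norm{e_{a_h}^n}^2$ remains on the right.

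Next I would bound the right-hand side term by term. The diagonal form is handled by coercivity \eqref{eq:coercivity_aqueous_saturation}, $\tau b_a^n(e_{a_h}^{n+1},e_{a_h}^{n+1}) \ge C_{\alpha,s_a}\tau\vertiii{e_{a_h}^{n+1}}^2$; the projection term $\tau b_a^n(e_{a_\pi}^{n+1},w_h)$ is controlled via the continuity bound \eqref{eq:boundedness_continuous_saturation} and the projection estimate \eqref{eq:StabNorm_L2projerrors} with $s=2$, producing $\varepsilon\tau\vertiii{e_{a_h}^{n+1}}^2 + C\tau h^2$ (the factor $\tau$ is retained). The generic term $C\tau(\tau + h + \vertiii{e_{p_h}^{n+1}} + \norm{S_{a_h}^n - s_a^n} + \norm{S_{v_h}^n - s_v^n})\vertiii{e_{a_h}^{n+1}}$ is split by Young's inequality into $\varepsilon\tau\vertiii{e_{a_h}^{n+1}}^2 + C\tau(\tau^2 + h^2 + \vertiii{e_{p_h}^{n+1}}^2 + \norm{S_{a_h}^n - s_a^n}^2 + \norm{S_{v_h}^n - s_v^n}^2)$; the crucial point is that $\tau\vertiii{e_{p_h}^{n+1}}^2$ is not small on its own, so I would absorb it through the already-proven pressure estimate \eqref{eq:error_estimate_pressure}, which bounds it by $C\tau(\tau^2 + h^2 + \norm{S_{a_h}^n - s_a^n}^2 + \norm{S_{v_h}^n - s_v^n}^2)$. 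Finally the truncation term $\tau\sigma_a(e_{a_h}^{n+1}) = (\beta_a^{n+1},e_{a_h}^{n+1})$ is estimated using $\norm{\beta_a^{n+1}} \le C\tau^2$, which follows from \eqref{eq:beta_def}, the bound $t - t_n \le \tau$ on $[t_n,t_{n+1}]$, and the assumed $C^2(0,T;L^2(\Omega))$ regularity of $s_a$; hence $(\beta_a^{n+1},e_{a_h}^{n+1}) \le \varepsilon\norm{e_{a_h}^{n+1}}^2 + C\tau^4$, absorbed into the mass term on the left and into $C\tau(\tau^2+h^2)$ on the right.

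Choosing $\varepsilon$ small enough that all the $\vertiii{e_{a_h}^{n+1}}^2$ and $\norm{e_{a_h}^{n+1}}^2$ contributions on the right are swallowed by $C_{\alpha,s_a}\tau\vertiii{e_{a_h}^{n+1}}^2$ and $\tfrac\phi2\norm{e_{a_h}^{n+1}}^2$ on the left, I would obtain $\norm{e_{a_h}^{n+1}}^2 + \tilde C\tau\vertiii{e_{a_h}^{n+1}}^2 \le \norm{e_{a_h}^n}^2 + C\tau(\tau^2 + h^2 + \norm{S_{a_h}^n - s_a^n}^2 + \norm{S_{v_h}^n - s_v^n}^2)$, and then pass back to the stated quantities via the orthogonal decomposition implied by \eqref{eq:L2_property}: on the left $\norm{e_{a_h}^{n+1}}^2 = \norm{S_{a_h}^{n+1}-s_a^{n+1}}^2 - \norm{e_{a_\pi}^{n+1}}^2$ with $\norm{e_{a_\pi}^{n+1}}^2 \le Ch^4 \le Ch^2$ by \eqref{eq:L2projerrors}, and on the right $\norm{e_{a_h}^n}^2 \le \norm{S_{a_h}^n - s_a^n}^2 \le (1+C\tau)\norm{S_{a_h}^n - s_a^n}^2$. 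I expect the main obstacle to be bookkeeping rather than new analysis: one must verify that every term on the right except $\norm{e_{a_h}^n}^2$ carries a factor $\tau$ (so that the subsequent discrete Gronwall argument closes with the expected $O(\tau + h)$ rate), and the only place where a purely local absorption fails is the pressure coupling term $\vertiii{e_{p_h}^{n+1}}$, which must instead be eliminated through the previously established estimate \eqref{eq:error_estimate_pressure}.
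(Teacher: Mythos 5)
Your overall machinery coincides with the paper's: testing \cref{eq:error_eq_aqueous_saturation} with $w_h=\tau e_{a_h}^{n+1}$, coercivity \cref{eq:coercivity_aqueous_saturation}, the star-norm boundedness \cref{eq:boundedness_continuous_saturation} combined with \cref{eq:StabNorm_L2projerrors}, Young's inequality, the bound $\norm[1]{\beta_a^{n+1}}\leq C\tau^2$, and the elimination of $\vertiii{e_{p_h}^{n+1}}^2$ through the pressure estimate \cref{eq:error_estimate_pressure}. The genuine gap is in your bookkeeping for the mass terms. You reduce them to $\phi\norm[1]{e_{a_h}^{n+1}}^2$ and $\phi(e_{a_h}^{n},e_{a_h}^{n+1})$ and only at the end convert back via Pythagoras, writing $\norm[1]{e_{a_h}^{n+1}}^2=\norm[1]{S_{a_h}^{n+1}-s_a^{n+1}}^2-\norm[1]{e_{a_\pi}^{n+1}}^2$ on the left while discarding $-\norm[1]{e_{a_\pi}^{n}}^2$ on the right. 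This injects an additive $C\norm[1]{e_{a_\pi}^{n+1}}^2\leq Ch^4$ \emph{per time step with no factor $\tau$}, so the inequality you actually obtain is weaker than \cref{eq:error_estimate_aqueous_saturation}: the extra term cannot be absorbed into $C\tau(\tau^2+h^2)$, and in the final induction it accumulates to $CTh^4/\tau$, i.e.\ the unconditional $O(\tau^2+h^2)$ estimate survives only if $\tau\gtrsim h^2$. The paper avoids this by never performing that conversion: it writes $(S_{a_h}^{n+1}-s_a^{n+1},e_{a_h}^{n+1})=\norm[1]{S_{a_h}^{n+1}-s_a^{n+1}}^2+(S_{a_h}^{n+1}-s_a^{n+1},e_{a_\pi}^{n+1})$ and $(S_{a_h}^{n}-s_a^{n},e_{a_h}^{n+1})=(S_{a_h}^{n}-s_a^{n},S_{a_h}^{n+1}-s_a^{n+1})+(S_{a_h}^{n}-s_a^{n},e_{a_\pi}^{n+1})$, so after subtraction the projection contributions combine into $\phi(S_{a_h}^{n}-S_{a_h}^{n+1},e_{a_\pi}^{n+1})+\phi(s_a^{n+1}-s_a^{n},e_{a_\pi}^{n+1})$; the first vanishes by \cref{eq:L2_property} and the second is bounded by $C\tau\norm[1]{e_{a_\pi}^{n+1}}\leq C\tau h^2$, the crucial $\tau$ coming from the time increment $s_a^{n+1}-s_a^{n}$, which your route throws away. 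Your argument can be repaired either by adopting that decomposition, or by keeping the exact Pythagorean identity at both time levels so the conversion cost is $\tfrac{\phi}{2}\bigl(\norm[1]{e_{a_\pi}^{n+1}}^2-\norm[1]{e_{a_\pi}^{n}}^2\bigr)$, which can be bounded by $C\tau h^4$ using the time regularity of $s_a$.

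A secondary flaw: estimating the truncation term as $(\beta_a^{n+1},e_{a_h}^{n+1})\leq\varepsilon\norm[1]{e_{a_h}^{n+1}}^2+C\tau^4$ with a \emph{fixed} $\varepsilon$ and absorbing into the mass term is not admissible, because it leaves $(\phi/2-\varepsilon)$ in front of the $(n{+}1)$-level norm against $\phi/2$ at level $n$, i.e.\ an amplification factor $1+O(1)$ per step instead of $1+C\tau$, which ruins the discrete Gronwall argument. Take $\varepsilon\sim\tau$ (the complementary term is then $C\tau^3\leq C\tau\cdot\tau^2$), or, as the paper implicitly does, bound $\norm[1]{\beta_a^{n+1}}\,\norm[1]{e_{a_h}^{n+1}}\leq C\tau^2 C_P\vertiii{e_{a_h}^{n+1}}$ via the discrete Poincar\'e inequality \cref{eq:poincare} and absorb it into the coercivity term $\tau\vertiii{e_{a_h}^{n+1}}^2$.
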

		
		\begin{proof}
			Let $w_h = \tau e_{a_h}^{n+1}$ in \cref{eq:error_eq_aqueous_saturation}:
			\begin{multline}
				\phi (S_{a_h}^{n+1} - s_{a}^{n+1}, e_{a_h}^{n+1}) 
				+ \tau b_{a}^{n}(e_{a_h}^{n+1}, e_{a_h}^{n+1}) 
				= \phi (S_{a_h}^n - s_{a}^n, e_{a_h}^{n+1}) + \tau b_{a}^{n}(e_{a_\pi}^{n+1}, e_{a_h}^{n+1})
				\\
				+ C \tau \left(\tau + h + \vertiii{e_{p_h}^{n+1}} + \Vert S_{a_h}^{n} - s_a^{n} \Vert\right) \vertiii{e_{a_h}^{n+1}} + \tau \sigma_a(e_{a_h}^{n+1}).
			\end{multline}
			Using the coercivity of $b_a^n$ \cref{eq:coercivity_aqueous_saturation} on the second term of the left-hand side, and the boundedness of $b_a^n$ \cref{eq:boundedness_continuous_saturation} on the first term of the right-hand side, we obtain
			\begin{multline}
				\phi (S_{a_h}^{n+1} - s_{a}^{n+1}, e_{a_h}^{n+1}) 
				+ C_{\alpha,s_a} \tau \vertiii{e_{a_h}^{n+1}}^2
				\leq \phi (S_{a_h}^n - s_{a}^n, e_{a_h}^{n+1}) + C_{B\ast,s_a} \tau \vertiii{e_{a_\pi}^{n+1}}_\ast \vertiii{e_{a_h}^{n+1}}
				\\
				+ C \tau \left(\tau + h + \vertiii{e_{p_h}^{n+1}} + \Vert S_{a_h}^{n} - s_a^{n} \Vert\right) \vertiii{e_{a_h}^{n+1}} + \tau \sigma_a(e_{a_h}^{n+1}).
				\label{eq:sa_est1}
			\end{multline}
			Note that $(S_{a_h}^{n+1} - s_{a}^{n+1}, e_{a_h}^{n+1}) = \norm[1]{S_{a_h}^{n+1} - s_a^{n+1}}^2 + (S_{a_h}^{n+1} - s_{a}^{n+1}, e_{a_\pi}^{n+1})$ and $(S_{a_h}^n - s_{a}^n, e_{a_h}^{n+1}) = (S_{a_h}^n - s_{a}^n, S_{a_h}^{n+1} - s_{a}^{n+1}) + (S_{a_h}^n - s_{a}^n, e_{a_\pi}^{n+1})$. Moreover,
			\begin{equation*}
				(S_{a_h}^n - s_{a}^n, S_{a_h}^{n+1} - s_{a}^{n+1}) = \frac{1}{2}\norm[1]{S_{a_h}^{n+1} - s_a^{n+1}}^2 + \frac{1}{2}\norm[1]{S_{a_h}^{n} - s_a^{n}}^2 
				- \frac{1}{2}\norm[1]{S_{a_h}^{n+1} - s_a^{n+1} - S_{a_h}^n + s_a^n}^2,
			\end{equation*}
			where we have used that $ab = \frac{1}{2}a^2 + \frac{1}{2}b^2 - \frac{1}{2}(a-b)^2$. Thus,
			\begin{multline}
				\frac{\phi}{2}\norm[1]{S_{a_h}^{n+1} - s_a^{n+1}}^2 
				+ C_{\alpha,s_a} \tau \vertiii{e_{a_h}^{n+1}}^2
				\leq \frac{\phi}{2}\norm[1]{S_{a_h}^{n} - s_a^{n}}^2 + \phi(S_{a_h}^n - S_{a_h}^{n+1}, e_{a_\pi}^{n+1}) + \phi(s_{a}^{n+1} - s_{a}^{n}, e_{a_\pi}^{n+1})
				\\
				+ C_{B\ast,s_a} \tau \vertiii{e_{a_\pi}^{n+1}}_\ast \vertiii{e_{a_h}^{n+1}}
				+ C \tau \left(\tau + h + \vertiii{e_{p_h}^{n+1}} + \Vert S_{a_h}^{n} - s_a^{n} \Vert\right) \vertiii{e_{a_h}^{n+1}} + \tau \sigma_a(e_{a_h}^{n+1}).
			\end{multline}
			The second term on the right hand side is zero due to \cref{eq:L2_property}. Moreover, the third term on the right hand side is bounded above by $C\tau \Vert e_{a_\pi}^{n+1} \Vert \leq C \tau h^2$. Thus,
			\begin{multline}
				\frac{\phi}{2}\norm[1]{S_{a_h}^{n+1} - s_a^{n+1}}^2 
				+ C_{\alpha,s_a} \tau \vertiii{e_{a_h}^{n+1}}^2
				\leq \frac{\phi}{2}\norm[1]{S_{a_h}^{n} - s_a^{n}}^2
				+ C_{B\ast,s_a} \tau \vertiii{e_{a_\pi}^{n+1}}_\ast \vertiii{e_{a_h}^{n+1}}
				\\
				+ C \tau \left(\tau + h + \vertiii{e_{p_h}^{n+1}} + \Vert S_{a_h}^{n} - s_a^{n} \Vert\right) \vertiii{e_{a_h}^{n+1}} + \tau \sigma_a(e_{a_h}^{n+1}) + C\tau h^2.
			\end{multline}
			Note that, using the Cauchy--Schwarz inequality, we have $\tau \sigma_a(e_{a_h}^{n+1}) \leq \norm[0]{\beta_a^{n+1}}\,\,\norm[0]{e_{a_h}^{n+1}}$. Moreover,
			\begin{align*}
				\norm[1]{\beta_a^{n+1}}^2 &= \int_{\Omega}\del{\phi \int_{t_n}^{t_{n+1}}(t-t_n)\partial_{tt}s_a\dif t}^2 
				\leq C \int_{\Omega}\del{ \int_{t_n}^{t_{n+1}}(t-t_n)\partial_{tt}s_a\dif t}^2 
				\\
				& \leq C\int_{\Omega}\del{\int_{t_n}^{t_{n+1}}(t-t_n)^2\dif t} \del{\int_{t_n}^{t_{n+1}}(\partial_{tt}s_a)^2\dif t} 
				\\
				& \leq C \tau^3 \int_{\Omega} \int_{t_n}^{t_{n+1}}(\partial_{tt}s_a)^2\dif t 
				\\
				& \leq C \tau^4 \max_{t \in [t_n, t_{n+1}]} \int_\Omega (\partial_{tt}s_a)^2
				\\
				& = C  \tau^4 \max_{t \in [t_n, t_{n+1}]} \norm[1]{\partial_{tt}s_a}^2
				\\
				& \leq C\tau^4,
			\end{align*}
			where we have used that $s_a \in C^2(0,T;L^2(\Omega))$. Using this, Young's inequality on the right hand side, and the estimates \cref{eq:StabNorm_L2projerrors}, we obtain
			\begin{multline}
				\frac{\phi}{2}\norm[1]{S_{a_h}^{n+1} - s_a^{n+1}}^2 
				+ C \tau \vertiii{e_{a_h}^{n+1}}^2
				\leq \frac{\phi}{2}\norm[1]{S_{a_h}^{n} - s_a^{n}}^2
				+ C \tau \left(\tau^2 + h^2 + \vertiii{e_{p_h}^{n+1}}^2 + \Vert S_{a_h}^{n} - s_a^{n} \Vert^2 \right).
				\label{eq:sa_est2}
			\end{multline}
			Using the liquid pressure error estimates \cref{eq:error_estimate_pressure} gives the result.
		\end{proof}
		
		\subsection{Vapor saturation}
		
		The following two lemma state error estimates for the error $e_{v_h}^{n}$. Proofs are similar to those of 
		Lemma~\ref{lem:error_eq_aqueous_saturation} and Lemma~\ref{lem:error_estimate_aqueous_saturation} and thus, are skipped for brevity.
		
		\begin{lemma}[Error equation for the vapor saturation]
			We have that, for all $w_h \in X_{h,\Gamma_\D^{s_v}}$, and all $0 \leq n \leq N-1$, there exists a constant $C>0$ independent of $h$ and $\tau$ such that
			\begin{multline}
				\dfrac{1}{\tau} (\phi (S_{v_h}^{n+1} - s_{v}^{n+1}), w_h) 
				+ b_{v}^{n}(e_{v_h}^{n+1}, w_h) 
				= \dfrac{1}{\tau} (\phi (S_{v_h}^n - s_{v}^n), w_h) + b_{v}^{n}(e_{v_\pi}^{n+1}, w_h)
				\\
				+ C \left(\tau + h + \vertiii{e_{p_h}^{n+1}} + \Vert S_{a_h}^{n} - s_a^{n} \Vert + \Vert S_{v_h}^{n} - s_v^{n} \Vert\right) \vertiii{w_h} + \sigma_v(w_h),
				\label{eq:error_eq_vapor_saturation}
			\end{multline}
			where
			\begin{equation}
				\sigma_v(w_h) = \frac{1}{\tau} (\beta_v^{n+1},w_h), \quad \beta_v^{n+1} = \phi\int_{t_n}^{t_{n+1}}(t-t_n)\partial_{tt}s_v\dif t.
			\end{equation}
		\end{lemma}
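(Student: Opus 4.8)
I would prove \eqref{eq:error_eq_vapor_saturation} by following the proof of Lemma~\ref{lem:error_eq_aqueous_saturation} line for line, replacing every aqueous quantity by its vapor counterpart and tracking which time level each frozen coefficient sits at. First I would expand $s_v$ in time about $t_{n+1}$,
\[
s_v^n = s_v^{n+1} - \tau\,(\partial_t s_v)^{n+1} + \int_{t_n}^{t_{n+1}}(t-t_n)\,\partial_{tt}s_v\,\dif t,
\]
rearrange, multiply by $\phi/\tau$ and a test function $w_h\in X_{h,\Gamma_\D^{s_v}}$, and integrate over $\Omega$ to obtain the exact identity $\frac1\tau(\phi s_v^{n+1},w_h) = \frac1\tau(\phi s_v^{n},w_h) + (\phi(\partial_t s_v)^{n+1},w_h) - \sigma_v(w_h)$. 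Using the consistency relation \eqref{eq:consistency_vapor_saturation} to replace $(\phi(\partial_t s_v)^{n+1},w_h)$ by $\tilde f_v^{n+1}(w_h) - \tilde b_v^{n+1}(s_v^{n+1},w_h)$ and subtracting the resulting equation from the discrete vapor problem \eqref{eq:disc_Sv} gives
\[
\frac1\tau(\phi(S_{v_h}^{n+1}-s_v^{n+1}),w_h) + b_v^{n}(S_{v_h}^{n+1},w_h) - \tilde b_v^{n+1}(s_v^{n+1},w_h) = \frac1\tau(\phi(S_{v_h}^{n}-s_v^{n}),w_h) + f_v^{n}(w_h) - \tilde f_v^{n+1}(w_h) + \sigma_v(w_h).
\]

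Next I would pass through the intermediate time level, writing
\[
b_v^{n}(S_{v_h}^{n+1},w_h) - \tilde b_v^{n+1}(s_v^{n+1},w_h) = b_v^{n}(S_{v_h}^{n+1}-s_v^{n+1},w_h) + \bigl(b_v^{n}(s_v^{n+1},w_h)-\tilde b_v^{n}(s_v^{n+1},w_h)\bigr) + \bigl(\tilde b_v^{n}(s_v^{n+1},w_h)-\tilde b_v^{n+1}(s_v^{n+1},w_h)\bigr),
\]
and likewise $f_v^{n}(w_h) - \tilde f_v^{n+1}(w_h) = \bigl(f_v^{n}(w_h)-\tilde f_v^{n}(w_h)\bigr) + \bigl(\tilde f_v^{n}(w_h)-\tilde f_v^{n+1}(w_h)\bigr)$. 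The four parenthesized terms are controlled by \eqref{eq:bv_bv_tilde}, \eqref{eq:bv_n_bv_nplus1}, \eqref{eq:fv_fv_tilde} and \eqref{eq:fv_n_fv_nplus1}, respectively; summing them and using $h^2\leq h$ gives a bound $C(\tau + h + \vertiii{e_{p_h}^{n+1}} + \Vert S_{a_h}^{n}-s_a^{n}\Vert + \Vert S_{v_h}^{n}-s_v^{n}\Vert)\vertiii{w_h}$, with the $\vertiii{e_{p_h}^{n+1}}$ contribution arising only through the Darcy-velocity term of $f_v$ via the Raviart--Thomas estimate \eqref{eq:RT_estimate}. Finally I would insert the error splitting $S_{v_h}^{n+1}-s_v^{n+1} = e_{v_h}^{n+1} - e_{v_\pi}^{n+1}$ from \eqref{eq:error_split} into $b_v^{n}(S_{v_h}^{n+1}-s_v^{n+1},w_h)$ and move $b_v^{n}(e_{v_\pi}^{n+1},w_h)$ to the right-hand side, obtaining \eqref{eq:error_eq_vapor_saturation}; the truncation term $\sigma_v(w_h)$ is carried along unchanged.

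Since all of the analysis is already packaged in the bounds \eqref{eq:bv_bv_tilde}, \eqref{eq:bv_n_bv_nplus1}, \eqref{eq:fv_fv_tilde}, \eqref{eq:fv_n_fv_nplus1} — which are the vapor counterparts of the aqueous bounds \eqref{eq:ba_ba_tilde}, \eqref{eq:ba_n_ba_nplus1}, \eqref{eq:fa_fa_tilde}, \eqref{eq:fa_n_fa_nplus1} — the remaining work is pure assembly and I do not expect a genuine obstacle. The one thing to be careful about is bookkeeping: $b_v^{n}$ and $f_v^{n}$ freeze $\lambda_v$ and $\partial_{s_v}p_{c,v}$ at discrete solutions that differ from the exact ones by $S_{a_h}^{n}-s_a^{n}$ and $S_{v_h}^{n}-s_v^{n}$ (handled through the Lipschitz hypothesis \ref{hyp_lipschitz}) and freeze the Darcy velocity at $P_h^{n+1}$ (handled through \eqref{eq:RT_estimate}), so one must check that each such difference contributes the claimed factor rather than something cruder — exactly the verification done for the aqueous equation. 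The term $\Vert\beta_v^{n+1}\Vert$ is not needed here; it is estimated later, as in Lemma~\ref{lem:error_estimate_aqueous_saturation}, by $C\tau^2$.
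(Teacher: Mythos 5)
Your proposal is correct and is exactly the argument the paper intends: the paper skips this proof as "analogous to Lemma~\ref{lem:error_eq_aqueous_saturation}," and you reproduce that aqueous argument verbatim with vapor quantities — Taylor expansion generating $\sigma_v$, the consistency relation \eqref{eq:consistency_vapor_saturation}, subtraction from \eqref{eq:disc_Sv}, the bounds \eqref{eq:bv_bv_tilde}, \eqref{eq:bv_n_bv_nplus1}, \eqref{eq:fv_fv_tilde}, \eqref{eq:fv_n_fv_nplus1}, and the splitting $S_{v_h}^{n+1}-s_v^{n+1}=e_{v_h}^{n+1}-e_{v_\pi}^{n+1}$. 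If anything, your explicit passage through the intermediate time level (as in the pressure error equation) is slightly more careful bookkeeping than the paper's aqueous write-up, but it is the same proof.
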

		
		\begin{lemma}[Error estimates for the vapor saturation]
			We have that, for all $0 \leq n \leq N-1$
			\begin{multline}
				\norm[1]{S_{v_h}^{n+1} - s_v^{n+1}}^2 
				+ \tilde{C} \tau \vertiii{e_{v_h}^{n+1}}^2
				\leq (1 + C\tau)\norm[1]{S_{v_h}^{n} - s_v^{n}}^2
				+ C \tau \norm[1]{S_{a_h}^{n} - s_a^{n}}^2 + C\tau(\tau^2 + h^2),
				\label{eq:error_estimate_vapor_saturation}
			\end{multline}
			where $C, \tilde{C} > 0$ are independent of $h$ and $\tau$.
			\label{lem:error_estimate_vapor_saturation}
		\end{lemma}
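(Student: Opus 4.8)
The plan is to repeat, \emph{mutatis mutandis}, the argument of Lemma~\ref{lem:error_estimate_aqueous_saturation}, with $(s_a,b_a,f_a,\sigma_a)$ replaced by $(s_v,b_v,f_v,\sigma_v)$. First I would take $w_h=\tau e_{v_h}^{n+1}$ in the vapor error equation \eqref{eq:error_eq_vapor_saturation}, so that the accumulation term becomes $\phi\,(S_{v_h}^{n+1}-s_v^{n+1},e_{v_h}^{n+1})$ and the diffusive term becomes $\tau\,b_v^{n}(e_{v_h}^{n+1},e_{v_h}^{n+1})$. I then apply the coercivity bound \eqref{eq:coercivity_vapor_saturation} -- legitimate because $\alpha_{s_v,*}$ satisfies the hypothesis stated there, hence $C_{\alpha,s_v}>0$ -- to obtain $\tau\,b_v^{n}(e_{v_h}^{n+1},e_{v_h}^{n+1})\ge C_{\alpha,s_v}\,\tau\,\vertiii{e_{v_h}^{n+1}}^2$, and the continuous boundedness \eqref{eq:boundedness_continuous_gas_fraction} to obtain $\tau\,b_v^{n}(e_{v_\pi}^{n+1},e_{v_h}^{n+1})\le C_{\mathrm{B}_\ast,s_v}\,\tau\,\vertiii{e_{v_\pi}^{n+1}}_\ast\,\vertiii{e_{v_h}^{n+1}}$.

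For the $L^2$ terms I would use $e_{v_h}^{n+1}=(S_{v_h}^{n+1}-s_v^{n+1})+e_{v_\pi}^{n+1}$ to write $(S_{v_h}^{n+1}-s_v^{n+1},e_{v_h}^{n+1})=\Vert S_{v_h}^{n+1}-s_v^{n+1}\Vert^2+(S_{v_h}^{n+1}-s_v^{n+1},e_{v_\pi}^{n+1})$ and $(S_{v_h}^{n}-s_v^{n},e_{v_h}^{n+1})=(S_{v_h}^{n}-s_v^{n},S_{v_h}^{n+1}-s_v^{n+1})+(S_{v_h}^{n}-s_v^{n},e_{v_\pi}^{n+1})$, then expand the cross product by the identity $ab=\tfrac12 a^2+\tfrac12 b^2-\tfrac12(a-b)^2$ and discard the (nonpositive) difference term. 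The combination $(S_{v_h}^{n}-S_{v_h}^{n+1},e_{v_\pi}^{n+1})$ vanishes by the $L^2$-orthogonality \eqref{eq:L2_property} (its first argument lies in $X_h$), while $(s_v^{n+1}-s_v^{n},e_{v_\pi}^{n+1})\le C\tau\,\Vert e_{v_\pi}^{n+1}\Vert\le C\tau h^2$ by the time regularity of $s_v$ and the projection bound \eqref{eq:L2projerrors}. For the truncation remainder I would bound $\tau\sigma_v(e_{v_h}^{n+1})=(\beta_v^{n+1},e_{v_h}^{n+1})\le\Vert\beta_v^{n+1}\Vert\,\Vert e_{v_h}^{n+1}\Vert$ and, exactly as for $\beta_a^{n+1}$, use Cauchy--Schwarz in $t$ together with $s_v\in C^2(0,T;L^2(\Omega))$ to get $\Vert\beta_v^{n+1}\Vert\le C\tau^2$.

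Collecting the pieces, every cross term carries a factor $\tau\,\vertiii{e_{v_h}^{n+1}}$ (after using the discrete Poincar\'e inequality \eqref{eq:poincare} in the $\beta_v$ term), so Young's inequality lets me absorb all $\tau\,\vertiii{e_{v_h}^{n+1}}^2$ contributions into $C_{\alpha,s_v}\,\tau\,\vertiii{e_{v_h}^{n+1}}^2$, leaving a constant $\tilde C>0$, while $\vertiii{e_{v_\pi}^{n+1}}_\ast$ is controlled by \eqref{eq:StabNorm_L2projerrors}. This produces the analogue of \eqref{eq:sa_est2},
\begin{multline*}
\frac{\phi}{2}\Vert S_{v_h}^{n+1}-s_v^{n+1}\Vert^2+\tilde C\,\tau\,\vertiii{e_{v_h}^{n+1}}^2
\le\frac{\phi}{2}\Vert S_{v_h}^{n}-s_v^{n}\Vert^2\\
+C\tau\del[1]{\tau^2+h^2+\vertiii{e_{p_h}^{n+1}}^2+\Vert S_{a_h}^{n}-s_a^{n}\Vert^2+\Vert S_{v_h}^{n}-s_v^{n}\Vert^2}.
\end{multline*}
Finally I would insert the liquid pressure estimate \eqref{eq:error_estimate_pressure}, which after dividing by $\tau$ gives $\vertiii{e_{p_h}^{n+1}}^2\le C(\tau^2+h^2)+C(\Vert S_{a_h}^{n}-s_a^{n}\Vert^2+\Vert S_{v_h}^{n}-s_v^{n}\Vert^2)$, divide the whole inequality by $\phi/2$, and fold the extra $C\tau\Vert S_{v_h}^{n}-s_v^{n}\Vert^2$ into the coefficient of $\Vert S_{v_h}^{n}-s_v^{n}\Vert^2$; this gives exactly \eqref{eq:error_estimate_vapor_saturation}. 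Since the genuinely analytic work (the Raviart--Thomas bound \eqref{eq:RT_estimate} and the pressure/aqueous estimates) is already in place, the only point demanding care here is the bookkeeping in the last two steps: one must verify that after substituting the pressure bound every term on the right is a data term of size $O(\tau^3+\tau h^2)$, a multiple of $\tau\Vert S_{a_h}^{n}-s_a^{n}\Vert^2$, or absorbable into $(1+C\tau)\Vert S_{v_h}^{n}-s_v^{n}\Vert^2$, so that no level-$(n+1)$ quantity ($S_{v_h}^{n+1}$ or $e_{v_h}^{n+1}$) survives on the right and the estimate is a clean one-step recursion ready for a discrete Gronwall argument.
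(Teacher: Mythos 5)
Your proposal is correct and is essentially the paper's own argument: the paper explicitly omits this proof, stating it is analogous to Lemmas~\ref{lem:error_eq_aqueous_saturation} and~\ref{lem:error_estimate_aqueous_saturation}, and your step-by-step transcription (test with $\tau e_{v_h}^{n+1}$, coercivity/boundedness of $b_v^n$, the $ab=\tfrac12 a^2+\tfrac12 b^2-\tfrac12(a-b)^2$ splitting with the orthogonality \eqref{eq:L2_property}, the $C\tau^2$ bound on $\beta_v^{n+1}$, Young's inequality with \eqref{eq:StabNorm_L2projerrors}, and insertion of the pressure estimate \eqref{eq:error_estimate_pressure}) mirrors the aqueous-saturation proof exactly. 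The bookkeeping you flag at the end indeed closes as you describe, so no gap remains.
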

		
		\subsection{Final estimates}
		
		In this section we combine the error estimates \cref{eq:error_estimate_pressure}, \cref{eq:error_estimate_aqueous_saturation} and \cref{eq:error_estimate_vapor_saturation}, and use induction to give the final error bounds.
		We first denote the errors made with the starting values by $\mathcal{E}(t_{0})$.
		\[
		\mathcal{E}(t_{0}) = \Vert S_{a_h}^{0}-s_a^0\Vert^2 + \Vert S_{v_h}^{0} - s_v^0\Vert^2.
		\]
		
		\begin{theorem}
			There exists a constant $C$ independent of $h$ and $\tau$ such that the following error estimates hold
			\begin{align}
				\Vert S_{a_h}^{N} - s_a^N \Vert^2 
				+\Vert S_{v_h}^{N} - s_v^N \Vert^2 
				+ C \tau \left( \vertiii{e_{p_h}^N}^2 + \vertiii{e_{a_h}^N}^2 + \vertiii{e_{v_h}^N}^2\right) 
				\leq e^{CT}\left(\mathcal{E}(t_0) + C (\tau^2 + h^2)\right).
				\label{eq:final_error_estimates}
			\end{align}
		\end{theorem}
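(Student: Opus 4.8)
The plan is to fuse the three single-step estimates of \cref{lem:error_estimate_pressure}, \cref{lem:error_estimate_aqueous_saturation} and \cref{lem:error_estimate_vapor_saturation} into one scalar recursion and then iterate it by a discrete Gr\"onwall argument. Writing
\[
\mathcal{E}(t_n) = \Vert S_{a_h}^n - s_a^n\Vert^2 + \Vert S_{v_h}^n - s_v^n\Vert^2,\qquad 0\le n\le N,
\]
the first step is to add \eqref{eq:error_estimate_aqueous_saturation} and \eqref{eq:error_estimate_vapor_saturation}: the two ``cross'' contributions $C\tau\Vert S_{v_h}^n-s_v^n\Vert^2$ and $C\tau\Vert S_{a_h}^n-s_a^n\Vert^2$ merge with the $(1+C\tau)$ coefficients, so that after relabelling the generic constants one obtains, for every $0\le n\le N-1$,
\[
\mathcal{E}(t_{n+1}) + \tilde C\tau\bigl(\vertiii{e_{a_h}^{n+1}}^2 + \vertiii{e_{v_h}^{n+1}}^2\bigr) \le (1+C\tau)\,\mathcal{E}(t_n) + C\tau(\tau^2+h^2).
\]
Discarding the nonnegative energy term on the left leaves the scalar recursion $\mathcal{E}(t_{n+1})\le (1+C\tau)\mathcal{E}(t_n) + C\tau(\tau^2+h^2)$.

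Next I would iterate this recursion. Arguing by induction on $n$, bounding the geometric sum $\sum_{j=0}^{n-1}(1+C\tau)^j=\bigl((1+C\tau)^n-1\bigr)/(C\tau)$ and using $(1+C\tau)^n\le e^{Cn\tau}\le e^{CT}$, the accumulated source term is $\le e^{CT}(\tau^2+h^2)$, whence
\[
\mathcal{E}(t_n) \le e^{CT}\bigl(\mathcal{E}(t_0) + C(\tau^2+h^2)\bigr),\qquad 0\le n\le N,
\]
with $\mathcal{E}(t_0)$ the initialization error coming from the $L^2$ projections \eqref{eq:L2projinit}. Evaluated at $n=N$ this already supplies the first two terms of \eqref{eq:final_error_estimates}. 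To recover the $\tau$-weighted energy terms at time $t_N$, I would take $n=N-1$ in the summed inequality above and insert the bound just obtained for $\mathcal{E}(t_{N-1})$, which controls $\tau\vertiii{e_{a_h}^N}^2+\tau\vertiii{e_{v_h}^N}^2$; likewise, substituting the bound for $\mathcal{E}(t_{N-1})$ into \eqref{eq:error_estimate_pressure} and using $\tau\le T$ controls $\tau\vertiii{e_{p_h}^N}^2$. Adding the three contributions and absorbing the harmless prefactors into $C$ and into $e^{CT}$ yields \eqref{eq:final_error_estimates}.

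The only delicate point I anticipate is the bookkeeping with the constants: one must confirm that the constants $C,\tilde C$ emerging from \cref{lem:error_estimate_pressure}--\cref{lem:error_estimate_vapor_saturation} are genuinely independent of $h$ and $\tau$ — which is guaranteed by hypotheses \ref{hyp_bounds}--\ref{hyp_kappa} and by the mesh-independence of the coercivity and boundedness constants — and that the exponential $e^{CT}$ produced by the induction absorbs the residual polynomial-in-$\tau$ prefactors. There is no substantive obstacle beyond this accounting; the real work, in particular the elimination of $\vertiii{e_{p_h}^{n+1}}$ from the right-hand sides of the saturation estimates by means of \eqref{eq:error_estimate_pressure}, has already been done in the preceding lemmas.
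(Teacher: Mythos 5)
Your argument is correct and essentially the paper's own: sum the one-step estimates of \cref{lem:error_estimate_pressure}, \cref{lem:error_estimate_aqueous_saturation} and \cref{lem:error_estimate_vapor_saturation} into a recursion of the form $A_{n+1}+B_{n+1}\le(1+C\tau)A_n+C\tau(\tau^2+h^2)$ and iterate, using $(1+C\tau)^N\le e^{CT}$ to conclude. The only cosmetic difference is that the paper folds the pressure estimate into the recursion from the start (so the $\tau\vertiii{e_{p_h}^{n+1}}^2$ term sits in $B_{n+1}$), whereas you reinstate the $\tau$-weighted energy terms at the final step $n=N-1$; both orderings give \eqref{eq:final_error_estimates} with generic constants.
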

		
		\begin{proof}
			Let $A_{n+1} = \Vert S_{a_h}^{n+1} - s_a^{n+1} \Vert^2 
			+\Vert S_{v_h}^{n+1} - s_v^{n+1} \Vert^2$, $B_{n+1} = C \tau \left( \vertiii{e_{p_h}^{n+1}}^2 + \vertiii{e_{a_h}^{n+1}}^2 + \vertiii{e_{v_h}^{n+1}}^2\right)$, and $D = C\tau(\tau^2 + h^2)$. Then, adding up all three estimates \cref{eq:error_estimate_pressure}, \cref{eq:error_estimate_aqueous_saturation} and \cref{eq:error_estimate_vapor_saturation}, we obtain
			\[
			A_{n+1} + B_{n+1} \leq (1+C\tau)A_n + D.
			\]
			Applying induction, we have that, for any $1 \leq n \leq N$:
			\[
			A_{n} + B_{n} \leq (1+C\tau)^{n}A_0 + D \sum_{k = 0}^{n-1}(1+C\tau)^k.
			\]
			We apply this with $n = N$. Since $(1+C\tau)^k \leq (1+C\tau)^{N} \leq e^{CN\tau} = e^{CT}$, we have
			\[
			A_{N} + B_{N} \leq e^{CT}A_0 + D \sum_{k = 0}^{n-1}e^{CT} \leq e^{CT}A_0 + (N-1)D e^{CT} \leq e^{CT}(A_0 + C(\tau^2 + h^2)),
			\]
			which concludes the proof.
		\end{proof}
		
		\begin{corollary}
			Assume that the initial solutions $S_{a_h}^0, S_{v_h}^0$ satisfy \eqref{eq:L2projinit}. Then we have
			\begin{align}
				\Vert S_{a_h}^{N} - s_a^N \Vert^2 
				+\Vert S_{v_h}^{N} - s_v^N \Vert^2 
				+ C \tau \left( \vertiii{e_{p_h}^N}^2 + \vertiii{e_{a_h}^N}^2 + \vertiii{e_{v_h}^N}^2\right) 
				\leq Ce^{CT}\left(\tau^2 + h^2\right)
				\label{eq:errors_time_n}
			\end{align}
		\end{corollary}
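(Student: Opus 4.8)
The plan is to read off \cref{eq:errors_time_n} directly from the preceding theorem, the only new ingredient being a bound on the initial error $\mathcal{E}(t_0)$ under the choice \eqref{eq:L2projinit}. So first I would estimate $\mathcal{E}(t_0)$, and then substitute into \eqref{eq:final_error_estimates} and absorb constants.

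For the initial error, I would rewrite $S_{a_h}^0 - s_a^0 = \Pi_h s_a^0 - s_a^0$ and $S_{v_h}^0 - s_v^0 = \Pi_h s_v^0 - s_v^0$, where $\Pi_h$ is the $L^2$-orthogonal projection onto $X_h$. Since the exact solutions satisfy $s_a, s_v \in C^0(0,T;H^2(\Omega))$, in particular $s_a^0, s_v^0 \in H^2(\Omega)$, the standard $L^2$-projection approximation estimate (the unconstrained analogue of \eqref{eq:L2projerrors} with $s=2$, $m=0$, obtained by the same local scaling and Bramble--Hilbert argument, applied elementwise and summed over $K \in \mathcal{E}_h$) gives
\[
\norm{\Pi_h s_a^0 - s_a^0}^2 \le C h^4 \abs{s_a^0}_{H^2(\Omega)}^2, \qquad
\norm{\Pi_h s_v^0 - s_v^0}^2 \le C h^4 \abs{s_v^0}_{H^2(\Omega)}^2,
\]
with $C$ independent of $h$ and $\tau$. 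Hence $\mathcal{E}(t_0) \le C h^4 \le C h^2$, using $h \le 1$.

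Inserting this into \eqref{eq:final_error_estimates} yields
\[
\norm{S_{a_h}^{N} - s_a^N}^2 + \norm{S_{v_h}^{N} - s_v^N}^2 + C\tau\del{\vertiii{e_{p_h}^N}^2 + \vertiii{e_{a_h}^N}^2 + \vertiii{e_{v_h}^N}^2} \le e^{CT}\del{\mathcal{E}(t_0) + C(\tau^2 + h^2)} \le e^{CT}\del{C h^2 + C(\tau^2 + h^2)},
\]
and collecting constants gives the claimed bound $C e^{CT}(\tau^2 + h^2)$.

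There is essentially no obstacle: the analytical work — the discrete Gr\"onwall argument and the per-step estimates \eqref{eq:error_estimate_pressure}, \eqref{eq:error_estimate_aqueous_saturation}, \eqref{eq:error_estimate_vapor_saturation} — was already carried out in proving the theorem. The only point worth a comment is that the initialization uses the unconstrained projection $\Pi_h$ onto $X_h$, whereas the error splitting in \cref{sec:error_analysis} is built on the boundary-constrained projections $\pi_{h,\Gamma}$; since the unconstrained $L^2$ projection is at least as accurate in $L^2(\Omega)$, the bound on $\mathcal{E}(t_0)$ above is immediate and requires no change to the earlier analysis.
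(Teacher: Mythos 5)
Your proposal is correct and follows the same route as the paper: the paper's proof simply invokes \eqref{eq:final_error_estimates}, implicitly using that the $L^2$-projection initialization \eqref{eq:L2projinit} makes $\mathcal{E}(t_0)\leq Ch^4\leq Ch^2$, which is exactly the step you spell out before absorbing constants. Your additional remark about the unconstrained projection $\Pi_h$ versus the boundary-constrained $\pi_{h,\Gamma}$ is a fair clarification of a detail the paper leaves tacit, but it does not change the argument.
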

		\begin{proof}
			This follows from \eqref{eq:final_error_estimates}.
		\end{proof}
		
		%
		
		\begin{remark}
			Note that, as expected, the convergence rates in space are suboptimal. As we show in \cref{sec:numerical_results}, by setting $\tau = h^2$ we can recover second order convergence. In order to obtain optimal rates of convergence in space, a duality argument is needed.
		\end{remark}
		
		\section{Numerical results}
		\label{sec:numerical_results}
		
		For the numerical results we consider manufactured solutions under different scenarios. The solution of the problem is given by
		\begin{subequations}
			\begin{align}
				p_\ell(t,x,y) & = 2 + xy^2 + x^2\sin(t+y),
				\\
				s_a(t,x,y) &= \frac{1+2x^2y^2+\cos(t+x)}{8},
				\\
				s_v(t,x,y) &= \frac{3 - \cos(t+x)}{8}.
			\end{align}
		\end{subequations}
		The computational domain is taken as $\Omega = [0,1] \times [0,1]$, and the final time of the problem is $T = 1$. The porosity $\phi$ is taken to be constant equal to 0.2, while the absolute permeability $\kappa$ is taken to be constant equal to 1. The phase viscosities are set as follows:
		\begin{equation}
			\mu_\ell = 0.75, \quad \mu_v = 0.25, \quad \mu_a = 0.5.
		\end{equation}
		The phase relative permeabilities and the capillary pressures are defined as follows~\cite{Bentsen:1976,Chen:2006}:
		\begin{subequations}
			\begin{align}
				k_{r\ell} = s_\ell(s_\ell+s_a)(1-s_a), \quad k_{rv} = s_v^2, \quad k_{ra} = s_a^2,
				\\
				p_{c,v} = \frac{3.9}{\text{ln}(0.01)}\text{ln}(1.01 - s_v), \quad p_{c,a} = \frac{6.3}{\text{ln}(0.01)}\text{ln}(s_a + 0.01).
			\end{align}
		\end{subequations}
		We consider Dirichlet boundary conditions on all the boundaries of the domain. The source terms $q_\ell$, $q_v$ and $q_a$ are computed according to the manufactured solutions and other parameters of the problem. 
		
		\subsection{Constant densities}
		\label{subsec:constant_densities}
		
		First, we consider the case in which the phase densities are constant and taken as
		\begin{equation}
			\rho_\ell = 3, \quad \rho_v = 1, \quad \rho_a = 5.
			\label{eq:densities_test}
		\end{equation}
		For this test case, gravity is not considered. We take $\theta_{p_\ell} = \theta_{s_a} = \theta_{s_v} = 1$ and $\alpha_{p_\ell,e} = \alpha_{s_a,e} = \alpha_{s_v,e} = 1$, on all the edges of the mesh. The simulation is performed on six uniform meshes with an initial mesh size of $h = 0.5$. We compute the $L^2$-errors at the final time. In \cref{tab:ConvRates_tau_equals_h_constant_rho} and \cref{tab:ConvRates_tau_equals_h_square_constant_rho} we show the results with the time step $\tau$ taken equal to $h$ and to $h^2$, respectively. We observe that, as expected from the results in \cref{sec:error_analysis}, when $\tau = h$, the scheme is first order. Moreover, we can recover second order when taking $\tau = h^2$.
		
		\begin{center}
			\begin{table}[tbp]
				\centering
				\small
				\caption{Rates of convergence for test case in \cref{subsec:constant_densities}, with $\tau = h$.}
				\begin{tabular}{cccccccc}
					\toprule
					\multirow{2}{*}{}
					& & \multicolumn{2}{c}{$p_\ell$}  & \multicolumn{2}{c}{$s_a$}  & \multicolumn{2}{c}{$s_v$}\\
					$h$ & {DOFs}
					& {$L^2(\Omega)$-error} & {Rate}
					& {$L^2(\Omega)$-error} & {Rate} & {$L^2(\Omega)$-error} & {Rate}\\
					\midrule
					{0.25} & {64} & {3.18e-2} & {-} & {7.41e-3} & {-} & {5.84e-2} & {-} \\
					{0.125} & {256}  & {1.14e-2} & {1.48} & {4.67e-3} & {0.66} & {9.64e-3} & {2.60} \\
					{0.0625} & {1,024} & {2.78e-3} & {2.04} & {2.27e-3} & {1.04} & {4.77e-3} & {1.02} \\
					{0.03125} & {4,096} & {9.22e-4} & {1.59} & {1.18e-3} & {0.94} & {2.15e-3} & {1.15} \\
					{0.015625} & {16,384} & {3.41e-4} & {1.44} & {6.01e-4} & {0.97} & {1.08e-3} & {1.01} \\
					\bottomrule
				\end{tabular}
				\label{tab:ConvRates_tau_equals_h_constant_rho}
			\end{table}
		\end{center}
		
		\begin{center}
			\begin{table}[tbp]
				\centering
				\small
				\caption{Rates of convergence for test case in \cref{subsec:constant_densities}, with $\tau = h^2$.}
				\begin{tabular}{cccccccc}
					\toprule
					\multirow{2}{*}{}
					& & \multicolumn{2}{c}{$p_\ell$}  & \multicolumn{2}{c}{$s_a$}  & \multicolumn{2}{c}{$s_v$}\\
					$h$ & {DOFs}
					& {$L^2(\Omega)$-error} & {Rate}
					& {$L^2(\Omega)$-error} & {Rate} & {$L^2(\Omega)$-error} & {Rate}\\
					\midrule
					{0.5} & {16} & {1.36e-1} & {-} & {6.48e-3} & {-} & {5.11e-2} & {-} \\
					{0.25} & {64} & {3.40e-2} & {2.00} & {1.51e-3} & {2.10} & {3.37e-3} & {3.92} \\
					{0.125} & {256}  & {8.43e-3} & {2.01} & {3.74e-4} & {2.01} & {6.95e-4} & {2.28} \\
					{0.0625} & {1,024} & {2.11e-3} & {2.00} & {9.35e-5} & {2.00} & {1.85e-4} & {1.91} \\
					{0.03125} & {4,096} & {5.32e-4} & {1.99} & {2.32e-5} & {2.01} & {5.07e-5} & {1.87} \\
					\bottomrule
				\end{tabular}
				\label{tab:ConvRates_tau_equals_h_square_constant_rho}
			\end{table}
		\end{center}

		\subsection{Gravity}
		\label{subsec:gravity}
		
		Finally, we consider the effect of gravity. We take $\boldsymbol{g} = [0\,\, -0.1]^T$, and the phase densities as taken as in \cref{eq:densities_test}. The simulation is performed on six uniform meshes with an initial mesh size of $h = 0.5$. We compute the $L^2$-errors at the final time. In \cref{tab:ConvRates_tau_equals_h_gravity,tab:ConvRates_tau_equals_h_square_gravity} we show the results with the time step $\tau$ taken equal to $h$ and to $h^2$, respectively. We observe that, as expected from the results in \cref{sec:error_analysis}, when $\tau = h$, the scheme is first order. Moreover, we can recover second order when taking $\tau = h^2$.
		
		\begin{center}
			\begin{table}[tbp]
				\centering
				\small
				\caption{Rates of convergence for test case in \cref{subsec:gravity}, with $\tau = h$.}
				\begin{tabular}{cccccccc}
					\toprule
					\multirow{2}{*}{}
					& & \multicolumn{2}{c}{$p_\ell$}  & \multicolumn{2}{c}{$s_a$}  & \multicolumn{2}{c}{$s_v$}\\
					$h$ & {DOFs}
					& {$L^2(\Omega)$-error} & {Rate}
					& {$L^2(\Omega)$-error} & {Rate} & {$L^2(\Omega)$-error} & {Rate}\\
					\midrule
					{0.25} & {64} & {3.20e-2} & {-} & {8.10e-3} & {-} & {6.05e-2} & {-} \\
					{0.125} & {256}  & {1.20e-2} & {1.42} & {5.06e-3} & {0.68} & {1.11e-2} & {2.45} \\
					{0.0625} & {1,024} & {2.78e-3} & {2.11} & {2.42e-3} & {1.06} & {5.03e-3} & {1.14} \\
					{0.03125} & {4,096} & {9.78e-4} & {1.51} & {1.27e-3} & {0.93} & {2.08e-3} & {1.27} \\
					{0.015625} & {16,384} & {3.66e-4} & {1.42} & {6.47e-4} & {0.97} & {1.04e-3} & {1.00} \\
					\bottomrule
				\end{tabular}
				\label{tab:ConvRates_tau_equals_h_gravity}
			\end{table}
		\end{center}
		
		\begin{center}
			\begin{table}[tbp]
				\centering
				\small
				\caption{Rates of convergence for test case in \cref{subsec:gravity}, with $\tau = h^2$.}
				\begin{tabular}{cccccccc}
					\toprule
					\multirow{2}{*}{}
					& & \multicolumn{2}{c}{$p_\ell$}  & \multicolumn{2}{c}{$s_a$}  & \multicolumn{2}{c}{$s_v$}\\
					$h$ & {DOFs}
					& {$L^2(\Omega)$-error} & {Rate}
					& {$L^2(\Omega)$-error} & {Rate} & {$L^2(\Omega)$-error} & {Rate}\\
					\midrule
					{0.5} & {16} & {1.36e-1} & {-} & {6.53e-3} & {-} & {5.50e-2} & {-} \\
					{0.25} & {64} & {3.43e-2} & {1.99} & {1.56e-3} & {2.07} & {3.72e-3} & {3.89} \\
					{0.125} & {256}  & {8.47e-3} & {2.02} & {3.79e-4} & {2.04} & {6.55-4} & {2.51} \\
					{0.0625} & {1,024} & {2.13e-3} & {1.99} & {9.51e-5} & {1.99} & {1.81e-4} & {1.86} \\
					{0.03125} & {4,096} & {5.35e-4} & {1.99} & {2.37e-5} & {2.00} & {5.03e-5} & {1.85} \\
					\bottomrule
				\end{tabular}
				\label{tab:ConvRates_tau_equals_h_square_gravity}
			\end{table}
		\end{center}
		
		%
		%
		
		\section{Conclusions}
		\label{sec:conclusions}
		We presented and analyzed a first order discontinuous Galerkin method for the incompressible three-phase flow problem in porous media. Our method does not require a subiteration scheme which makes it computationally cheaper. We obtained a priori error estimates by assuming Lipschitz continuity of the coefficients. The numerical test cases show, under different scenarios, that our scheme is first order convergent. For future work, we would like to extend the numerical analysis to variable density flow and to a second order scheme by using a BDF2 time stepping. Moreover, we plan to extend this scheme to the black oil problem where mass transfer can occur between the liquid and vapor phases and study the performance of such methods on setups that includes wells or viscous fingering effects ~\cite{bangerth2006optimization,li2016numerical}.
		

		\bibliographystyle{siamplain}
		\bibliography{references}
	\end{document}